\pgfplotsset{compat=1.16}
\newtheorem{defn}{Definition}[section]
\newtheorem{thm}{Theorem}[section]
\newtheorem{prop}{Proposition}[section]
\newtheorem{lem}{Lemma}[section]
\newtheorem{cor}{Corollary}[section]
\newcommand{\N}{\mathbb{N}}
\newcommand{\R}{\mathbb{R}}
\newcommand{\ep}{{\epsilon}}
\newcommand{\me}{{m_\ep}}
\newcommand{\ce}{{c_\ep}}
\newcommand{\de}{{d_\ep}}
\newcommand{\Te}{{T_{max}}}
\newcommand{\Li}{{L^{\infty}(\Omega)}}
\newcommand{\Luno}{{L^{1}(\Omega)}}
\newcommand{\Wi}{{W^{1,\infty}(\Omega)}}
\newcommand{\into}{{\int_{\Omega}}}
\newcommand{\Tin}{{(0, \Te)}}
\newcommand{\medot}{{\me(\cdot,t)}}
\newcommand{\dx}{{\mathrm{d}x}}
\newcommand{\dt}{{\mathrm{d}t}}
\newcommand{\dd}{{\mathrm{d}}}
\keywords{Multiple sclerosis, Global existence, Chemotaxis, Nonlinear diffusion} 
\subjclass[2010]{	35K65;35B45;35Q92;35K57;92C17}
\begin{document}
\author{S. Fagioli \and E. Radici \and L. Romagnoli}
\address{ Simone Fagioli - DISIM - Department of Information Engineering, Computer Science and Mathematics, University of L'Aquila, Via Vetoio 1 (Coppito)
67100 L'Aquila (AQ) - Italy}
\email{simone.fagioli@univaq.it}
\address{ Emanuela Radici - DISIM - Department of Information Engineering, Computer Science and Mathematics, University of L'Aquila, Via Vetoio 1 (Coppito)
67100 L'Aquila (AQ) - Italy}
\email{emanuela.radici@univaq.it}
\address{ Licia Romagnoli - Dipartimento di Matematica e Fisica - Facoltà di Scienze Matematiche, Fisiche e Naturali,
Università Cattolica del Sacro Cuore di Brescia,
Sede del Buon Pastore
Via Musei 41 
25121 Brescia (BS) - Italy}
\email{licia.romagnoli1@unicatt.it}

\title[On a chemotaxis model with nonlinear diffusion  modelling multiple sclerosis]{On a chemotaxis model with nonlinear diffusion  modelling multiple sclerosis}
\date{}
\begin{abstract}
We investigated existence of global weak solutions for a system of chemotaxis-hapotaxis type with nonlinear degenerate diffusion arising in modelling multiple sclerosis disease. The model consists of three equations describing the evolution of macrophages ($m$), cytokine ($c$) and apoptotic oligodendrocytes ($d$) densities. 
The main novelty in our work is the presence of a nonlinear diffusivity $D(m)$, which results to be more appropriate from the modelling point of view. Under suitable assumptions and for sufficiently regular initial data, adapting the strategy in \cite{li2016,tao2011}, we show the existence of global bounded solutions for the model analysed. 
\end{abstract}
\maketitle

\section{Introduction}

Multiple Sclerosis (MS) is a chronic inflammatory disease that affects the central nervous system, including the brain and spinal cord. It can lead to progressive disability. The disease is caused by an abnormal response of the immune system, resulting in inflammation and damage to neurons and myelin surroundig them. Myelin is a lipid-rich material that surrounds the axons of neurons and facilitates the transmission of nerve impulses. It is produced by the oligodendrocytes. The immune system, specifically the macrophages, attacks and destroys the oligodendrocytes and the myelin sheath around the nerves. This demyelination process leads to the formation of lesions (referred to as plaques in 2D sections) in the white matter of the brain \cite{Lass18}.

Demyelination in  (MS) patients is a heterogeneous process that gives rise to various clinical variants. In the classical study \cite{Lucchinettietal00}, four different types of lesions (Type I - IV) were identified, each corresponding to a different clinical variant. The presence of different lesion types reflects the stage-dependent nature of the pathology, with the evolution of lesional pathology contributing to the heterogeneity. Specifically, Type III lesions are typical in the early stages of the disease, followed by Type I and Type II lesions \cite{Barnett09}. In type II lesions, it is suggested that activated T-lymphocytes may be responsible for inducing macrophage activation. On the other hand, in type III lesions, there is a prominent activation of macrophages observed, accompanied by relatively mild infiltration of T-cells \cite{barnett,marik}. These observations highlight the heterogeneity in the underlying mechanisms and cellular interactions involved in different types of MS lesions. It is also hypothesised that the activation of microglia, a type of macrophages in the central nervous system, plays a role in the development of early multiple sclerosis (MS) lesions \cite{ponomarev}. However, the exact underlying mechanism behind this activation is still unknown.

One clinical variant of particular interest is Baló's concentric sclerosis, which is a rare and aggressive form of the disease described in the literature as a form of MS that exhibits an acute fulminant disease course, leading to rapid progression and death within a few months. Baló MS is characterised by the presence of large demyelinated lesions displaying a distinct pattern of concentric layers, alternating between areas of preserved and destroyed myelin. These lesions are classified as Type III lesions according to the classification system mentioned earlier \cite{Balo}.





In \cite{khonsari} the authors proposed a reaction-diffusion-chemotaxis model to capture the dynamics of early-stage multiple sclerosis, with a specific focus on describing Baló's sclerosis. The model consists of three equations that govern the evolution of activated macrophages, cytokines, and apoptotic oligodendrocytes. We introduce below the model referring to \cite{lombardo} for a more detailed description.

The evolution of macrophages in the model is influenced by three mechanisms. Firstly, macrophages undergo random movement, which is described by a linear isotropic diffusion term. Secondly, they exhibit chemotactic motion in response to a chemical gradient provided by the cytokines. These various factors contribute to the evolution of activated macrophages, denoted by the variable $m$ in the model. The above considerations motivate the following evolution of the activated macrophages $m$
\[
\partial_t m = D\Delta m - \chi \nabla \cdot \left(\frac{m}{1+\delta m}\nabla c\right)+ m\left(1-m\right).
\]
The reaction term in the equation mentioned above captures the production and saturation of activated macrophages. The coefficient $\chi$ represents the chemotactic sensitivity, indicating how sensitive the macrophages are to the chemical gradient provided by the pro-inflammatory cytokines.

The pro-inflammatory cytokines, which are signaling molecules involved in the immune response, are assumed to be produced by both the damaged oligodendrocytes and activated macrophages. In the model, they are described by an equation that takes into account their linear diffusion (possibly occurring at a different scale compared to the macrophages) and degradation. The equation governing the evolution of pro-inflammatory cytokines $c$ can be written as follows:
\[
\tau\partial_t c = \alpha\Delta c -c+ \lambda d  +\beta m.
\]
The parameters $\tau$ and $\alpha$ are positive constants, and $\lambda$, $\beta$, and $\delta$ are non-negative constants.

The destroyed oligodendrocytes, which are the target cells of the immune response, are assumed to be immotile. As a result, there is no spatial dynamics associated with them, and their evolution is governed by the following equation:
\[
\partial_t d = rm \frac{m}{1+\delta m}\left(1-d\right).
\] 
The parameter $r$ in the equation balances the speed of the front and the intensity of the macrophages in damaging the myelin. It determines the relative contribution of the damaging term $\frac{m}{1+\delta m}$, which has been chosen to be positive and increasing with saturation for high values of the macrophages density.

It is worth noting that when $\tau=0$, the model \eqref{eq:main_PM} reduces to a parabolic-elliptic chemotaxis system with a volume-filling effect and a logistic source.

The system introduced in \cite{calvez,khonsari} in the above form from \cite{lombardo} has been studied in recent years through several contributions. Authors in \cite{barresi16,bilotta19,lombardo} investigated various issues related to the structure, stability of stationary states, radial solutions and travelling waves. Furthermore, see also \cite{bisi2023chemotaxis} for a different term describing the production and saturation of activated macrophages. The global existence of strong solutions to this system was proven in one dimension in \cite{DevGiu21} and later extended to any dimension in \cite{desvillettes_giunta_morgan_tang_2022,HU20206875}, where it was also shown that the solution remains uniformly bounded in time. Additionally, we would like to mention the extension of the model that was introduced in \cite{MoFr21}, where the authors introduced a multi-species system to describe the activity of various pro- and anti-inflammatory cells and cytokines in the plaque, and quantified their effect on plaque growth.

The above-mentioned model considered cell random motility, denoted by $D$, as a constant, resulting in linear isotropic diffusion. As emphasised in the classical references \cite{SzMRLaCh09,tao2011}, from a physical perspective, cell migration through biological tissues can be modelled as movement in a porous medium. Thus, we are led to consider the cell motility $D$ as a \emph{nonlinear} function of the macrophage density, denoted as $D(m)$. Several possible choices have been presented in the literature to model different types of movement, including volume filling effects and saturation. However, in the present work, we will focus on power-law type nonlinearities for $D(m)$, specifically $D(m)\cong m^{\gamma-1}$, where $\gamma>1$. More precisely, in this paper, we will investigate the following generalization of the model introduced in \cite{calvez,khonsari,lombardo}:
\begin{equation}\label{eq:main_PM}
    \begin{cases}
        \partial_t m = \nabla \cdot \left(D(m) \nabla m\right) - \chi \nabla \cdot \left(f(m) \nabla c\right) + M(m), \\
        \tau \partial_t c = \alpha \Delta c + \lambda d - c + \beta m, \\
        \partial_t d = r h(m) \left(1-d\right),
    \end{cases}
\end{equation}
where the system is posed in a bounded domain $\Omega \subset \mathbb{R}^n$ with smooth boundary $\partial\Omega$. Our approach will be based on the strategy presented in \cite{li2016}, where the existence of global classical solutions is established through a regularisation argument on the degenerate diffusivity.

Since Keller and Segel \cite{keller} introduced the classical chemotaxis model in 1970, the Keller-Segel model and its modified versions have been widely studied by many researchers over the years. References such as \cite{bellomo2015,bellomo22} provide an extensive overview of these studies. It is well known that the formation of a cell aggregate can lead to finite-time blow-up phenomena. For instance, for the Keller-Segel model in the following form:
\begin{displaymath}
\begin{cases}
u_t=\Delta u-\nabla\cdot(u\cdot\nabla v)\\
\tau v_t-\Delta v+v=u,
\end{cases}
\end{displaymath}
researchers have investigated global solutions and blow-up solutions, as documented in \cite{bellomo2015,jin,winkler2013}.

Due to presence of the third equation and the coupling in the second equation, system \eqref{eq:main_PM} can be regarded as a chemotaxis-haptotaxis model, with the first such model introduced in \cite{chap} to describe the invasion process of cancer cells into surrounding normal tissue. In this models, the random diffusion of activated macrophages is characterised by linear isotropic diffusion, which corresponds to the model \eqref{eq:main_PM} with $\gamma=1$. For this specific case, global classical solutions have been obtained in two-dimensional spatial domain by \cite{tao}, while for the three-dimensional case, global classical solutions are only obtained for large values of $\frac{\mu}{\chi}$, as shown in \cite{cao}. In \cite{tao2011}, the authors considered the case of nonlinear diffusion without degeneracy, where the standard porous medium diffusivity $\Delta m^{\gamma}$ is replaced by $\Delta((m+\epsilon)^{\gamma-1}m)$. Global classical solutions are established for this case, subject to certain restrictions on the possible porous medium exponents related to the problem dimension.

Further results in this direction have been obtained in \cite{li2016} and \cite{wang}, where the existence of global and bounded classical solutions is shown for any $\gamma > \frac{2n-2}{n}$. Recently, Zheng \cite{zheng} extended these results to the cases $\gamma > \frac{2n}{n+2}$. However, the cases $1<\gamma\leq\frac{2n}{n+2}$ remain unknown. On the other hand, for the fast diffusion cases, i.e., $0<\gamma<1$, to the best of our knowledge, there have been no relevant studies. In \cite{liu2020}, the author presents further improvements in the optimality conditions for the nonlinear diffusion exponent.


\subsection*{Structure of the paper}
The paper is organised as follows. In Sect.\,\ref{notazione} we clarify the notation and we list all the assumptions. In Sect.\,\ref{mainresults} are stated the main results of this paper.  In Sect.\,\ref{reg} a detailed investigation of the regularised problem associated to \eqref{eq:main_PM} is performed in order to show some useful a-priori estimates and boundedness of the solutions. The local existence in time of the weak solutions to the regularised problem is shown in the Appendix \ref{sec:appendix united}.
In Sect.\ref{sec:nonregularised} we derive the existence of global weak solutions of \eqref{eq:main_PM} by showing suitable compactness of the solutions of the regularised problems. Several known technical results which are used in the proof are collected in Appendix \ref{pre}. Final remarks on the fundamental role of the nonlinear diffusion adopted in model \eqref{eq:main_PM}, endorsed by some numerical simulations on $1D$ and $2D$ domains, are listed in Sect.\ref{sec:conclusion} together with possible developments of the problem that are not further investigated in the present work. 
In  Appendix \ref{sec:appendix num} we describe the employed finite volume numerical scheme.

\section{Preliminaries and Main Result}\label{boundedness}
\subsection{Assumptions}\label{notazione}
Let $\Omega \subset \R^n$, with $n=1,2,3$, be a bounded domain with smooth boundary and $t>0$. Without loss of generality we fix the constants $\tau=\alpha=\lambda=\beta=r=1$ and we consider the following form for system \eqref{eq:main_PM}
\begin{equation}\label{eq:main_Gen}
    \begin{cases}
\partial_t m = \nabla\cdot \left( D(m) \nabla m - \chi  f(m)\nabla c\right)+M(m), \\
\partial_t c = \Delta c  - c +d+ m, \\
\partial_t d = h(m)\left(1-d\right),
\end{cases}\quad x\in \Omega,\,t>0,
\end{equation}
endowed with the Neumann boundary conditions
\begin{equation}\label{neumbc}
\frac{\partial m}{\partial\mathbf{n}}\bigg|_{\partial\Omega}=\frac{\partial c}{\partial\mathbf{n}}\bigg|_{\partial\Omega}=0,
\end{equation}
and subject to the initial conditions
\[
 m(x,0)=m_0(x),\, c(x,0)=c_0(x),\,d(x,0)=d_0(x),
\]
where the functions $m_0$, $c_0$ and $d_0$ satisfy a standard compatibility condition in the sense that
    \begin{equation}\label{indata}
     \begin{cases}
    m_0\in L^{\infty}(\Omega),\,\nabla m_0\in L^{2}(\Omega)\ \ \mbox{with}\ \ m_0\geq0\ \ \mbox{in} \ \ \Omega \ \ \mbox{and} \ \ m_0\neq0, \\
    c_0\in W^{1,\infty}(\Omega)\ \ \mbox{with}\ \ c_0\geq0\ \ \mbox{in} \ \ \Omega \ \ \mbox{and} \ \ \displaystyle{\frac{\partial c_0}{\partial\mathbf{\nu}}}=0 \ \ \mbox{on} \ \ \partial\Omega, \\
    d_0\in C^{2}(\bar{\Omega})\ \ \mbox{with}\ \ 0\leq  d_0\leq 1\ \ \mbox{in} \ \ \bar{\Omega} \ \ \mbox{and} \ \ \displaystyle{\frac{\partial d_0}{\partial\mathbf{\nu}}}=0 \ \ \mbox{on} \ \ \partial\Omega,  
    \end{cases}
\end{equation}

In system \eqref{eq:main_Gen} we consider a nonlinear diffusion function $D$ under the following assumptions
\begin{equation}\tag{A-D1}\label{AssD1}
    D\in C^2([0,\infty)),\,D(s)\geq 0,\,\,\mbox{for all}\,s\geq0, \mbox{ and }\,
\end{equation}
\begin{equation}\tag{A-D2}\label{AssD2}
\mbox{there exist some constants $\gamma > 1$ and $k_D>0$ such that }\,
D(s)\geq k_D s^{\gamma-1}\,\mbox{for all}\,s\geq0,
\end{equation}
\begin{equation}\tag{A-D3}\label{ass_D}
 \mbox{we denote by $\Phi$ the primitive function of $D$, namely }\, \Phi(u)=\int_0^u D(s)\, \dd s.
\end{equation}
Concerning the reaction term in the the first equation of \eqref{eq:main_Gen} we assume that
\begin{equation}\tag{A-M}\label{ass_M}
M\in C^1([0,\infty)),\,\mbox{and exists $\mu>1$ such that }\,
\frac{M(s)}{s}\leq\mu(1-s)\,\mbox{for all}\,s>0.   
\end{equation}
We finally assume that
\begin{equation}\tag{A-h}\label{ass_h}
h\in C^1([0,\infty)),\,\mbox{non-negative and exists $k_h>0$ such that }\,    h'(s)\leq k_h,\,\mbox{for all}\,s\geq0. 
\end{equation}
and
\begin{equation}\tag{A-f}\label{ass_f}
f\in C^1([0,\infty)),\,\mbox{such that $f(0)=0$ and exists $k_f>0$ such that }\,   |f(s)|\leq k_f s ,\,\mbox{for all}\,s\geq0. 
\end{equation}

\subsection{Main result}\label{mainresults}
We now state the notion of solutions we are dealing with
\begin{defn}\label{def:weak_sol}
Let $T>0$ be a real number. Consider $\Omega\subset\mathbb{R}^n$ a bounded domain with smooth boundary $\partial\Omega$. A triple $(m, c, d)$ of non-negative functions is called a weak solution of \eqref{eq:main_Gen} endowed with the Neumann boundary conditions
\eqref{neumbc} on $\Omega\times\left[0,T\right)$ and initial conditions $\left(m_0,c_0,d_0\right)$, if 
\begin{align*}
     (m, c, d)\in L_{loc}^2([0,T);L^2(\Omega))\,\times\, L_{loc}^2([0,&T);W^{1,2}(\Omega))\, \times\, L_{loc}^2([0,T);W^{1,2}(\Omega)),\\
     D(m)\nabla m\in &\,L_{loc}^2([0,T);L^2(\Omega)),
\end{align*}
and for any $\phi\in C_0^{\infty}\left(\overline{\Omega}\times \left[0,T\right)\right)$
\begin{equation}\label{weak_m}
    \begin{aligned}
-\int_0^T\into m\phi_t\, \dx\dt-\int_{\Omega} m_0\phi(x,0)\,\dx=&-\int_0^T\into D(m)\nabla m\nabla\phi \,\dx\dt\\
&+\chi\int_0^T\into f(m)\nabla c\nabla\phi \,\dx\dt+\int_0^T\into M(m)\phi \,\dx\dt,
\end{aligned}
\end{equation}
\begin{equation}\label{weak_c}
  -\int_0^T\into c\phi_t\,\dx\dt-\int_{\Omega}c_0\phi(x,0)\,\dx=-\int_0^T\into\nabla c\nabla\phi\,\dx\dt+\int_0^T\into(m+d-c)\phi\,\dx\dt,
\end{equation}
\begin{equation}\label{weak_d}
    -\int_0^T\into d\phi_t\,\dx\dt-\int_{\Omega}d_0\phi(x,0)\dx=\int_0^T\into mh(m)(1-d)\phi\,\dx\dt.
\end{equation}
If the triple $(m, c, d)$ satisfies the conditions for all $T>0$, the weak solution is called global.
\end{defn}

In the following we will assume that the diffusion exponent $\gamma$ satisfies the following restrictions
\begin{equation}\tag{cond-$\gamma$}\label{cod_gamma}
 \gamma>\max\left\{2-\frac{2}{n},1\right\}\, \mbox{ for }\, n=1,2,3.
\end{equation}
The main result of the paper is contained in the following 
\begin{thm}\label{thm1}
Let $\Omega\subset\mathbb{R}^n$ be a bounded domain with smooth boundary $\partial\Omega$ and $\chi,\mu>0$. Assume that $D$ satisfies \eqref{AssD1}-\eqref{AssD2}-\eqref{ass_D}, $M$, $f$ and $h$ are under assumptions \eqref{ass_M}, \eqref{ass_f} and \eqref{ass_h} respectively. Consider a triple $\left(m_0,c_0,d_0\right)$ satisfying \eqref{indata}. Then, for any $\gamma>1$ that satisfies \eqref{cod_gamma}, there is $C>0$ such that system \eqref{eq:main_Gen} endowed with the Neumann boundary conditions
\eqref{neumbc} has a weak solution $\left(m,c,d\right)$ in the sense of Definition \ref{def:weak_sol} that exists globally in time and satisfies
\[
\|m(\cdot,t)\|_{L^\infty(\Omega)}+\|c(\cdot,t)\|_{W^{1,\infty}(\Omega)}+\|d(\cdot,t)\|_{W^{1,\infty}(\Omega)}\leq C,
\]
for all $t\in (0,\infty)$.
\end{thm}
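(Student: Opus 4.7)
The strategy, adapting \cite{li2016,tao2011}, is to regularise the degenerate diffusivity, establish bounds for the resulting non-degenerate problems that are uniform in the regularisation parameter, and then extract a weak limit. I would set $D_\ep(s) := D(s) + \ep$ for $\ep \in (0,1)$, approximate the initial data $(m_0,c_0,d_0)$ by smoother triples consistent with \eqref{indata}, and invoke the local-in-time existence result from Appendix \ref{sec:appendix united} to obtain a classical solution $(m_\ep,c_\ep,d_\ep)$ on a maximal interval $[0,\Te^\ep)$. The central task is then to show that the $L^\infty(\Omega)\times W^{1,\infty}(\Omega)\times W^{1,\infty}(\Omega)$ norm of this triple stays bounded uniformly in $\ep$ and $t$, which will rule out blow-up so that $\Te^\ep=+\infty$ and will provide the compactness needed to pass to the limit $\ep\to 0$.

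For the a priori estimates I would proceed hierarchically. The ODE for $d_\ep$, together with $h\ge 0$ and $0\le d_0\le 1$, yields $0\le d_\ep\le 1$ pointwise. Testing the $m_\ep$-equation by the constant function $1$ and using the logistic bound \eqref{ass_M} gives a uniform $L^1$ control on $m_\ep$. The key step is the $L^p$ estimate: for $p\ge 2$, multiplying by $m_\ep^{p-1}$, using \eqref{AssD2} and \eqref{ass_f}, integrating by parts the chemotactic term and substituting $-\Delta c_\ep = m_\ep + d_\ep - c_\ep - \partial_t c_\ep$ from the second equation, one obtains the dissipation $\|\nabla m_\ep^{(p+\gamma-1)/2}\|_{L^2}^2$ and a bad $\int_\Omega m_\ep^{p+1}\, \dx$ term which is absorbed thanks to the $-\mu\int m_\ep^{p+1}$ provided by the logistic source \eqref{ass_M}. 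The remaining gradient contribution is closed via Young's inequality combined with a Gagliardo--Nirenberg interpolation on $m_\ep^{(p+\gamma-1)/2}$, and it is precisely at this step that the condition $\gamma>2-2/n$ from \eqref{cod_gamma} is needed to make the relevant Sobolev exponent subcritical. This yields $\|m_\ep(\cdot,t)\|_{L^p(\Omega)}\le C(p)$ uniformly in $\ep$ and $t$.

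A Moser-type iteration, carried out as in \cite{li2016}, then upgrades the family of $L^p$ bounds into a uniform $L^\infty$ bound on $m_\ep$. With $m_\ep,d_\ep\in L^\infty$, the $c_\ep$-equation is a linear parabolic equation with $L^\infty$ source and homogeneous Neumann data, so standard maximal regularity delivers $\|c_\ep(\cdot,t)\|_{W^{1,\infty}(\Omega)}\le C$. The explicit representation
\begin{equation*}
d_\ep(x,t) = 1 - \bigl(1 - d_{0,\ep}(x)\bigr)\exp\left(-\int_0^t h(m_\ep(x,s))\, \ds\right),
\end{equation*}
combined with \eqref{ass_h} and the uniform bound on $m_\ep$, gives the corresponding $W^{1,\infty}$ estimate for $d_\ep$ after differentiation in $x$.

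The main obstacle is the $L^p$ estimate: the interplay between the degenerate porous-medium dissipation $D(m_\ep)\sim m_\ep^{\gamma-1}$ and the chemotactic flux is exactly what dictates the threshold $\gamma>2-2/n$, and careful balancing via Gagliardo--Nirenberg is required to close it. Once the uniform bounds are in place, the passage to the limit is routine: the spatial bounds together with an equicontinuity-in-time estimate extracted directly from the equations permit an application of the Aubin--Lions lemma (recalled in Appendix \ref{pre}) to produce strong convergence of $m_\ep$ and of $\nabla c_\ep$ in $L^2_{loc}([0,T);L^2(\Omega))$, while $d_\ep$ converges strongly via its explicit ODE formula. These convergences suffice to pass to the limit in each term of the weak formulations \eqref{weak_m}--\eqref{weak_d}, producing a global weak solution in the sense of Definition \ref{def:weak_sol} that satisfies the stated uniform bound.
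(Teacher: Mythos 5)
Your overall architecture (regularise $D$, prove $\ep$-uniform $L^1\to L^p\to L^\infty$ bounds, Moser iteration, semigroup estimates for $\ce$, Aubin--Lions, pass to the limit) matches the paper's. However, the central step --- the uniform $L^p$ estimate for $\me$ --- is attempted by a route that does not close in the fully parabolic setting considered here. After integrating the chemotactic term by parts you propose to substitute $-\Delta \ce=\me+\de-\ce-\partial_t\ce$ from the second equation. This produces, besides the $\int_\Omega \me^{p+1}\,\dx$ term you mention, a term of the form $\chi\int_\Omega G(\me)\,\partial_t\ce\,\dx$ with $G(s)\sim s^p$, which you never address; in the parabolic--parabolic case this term is precisely the obstruction that makes the naive substitution fail (it works only when $\tau=0$). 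Moreover, even setting that aside, absorbing $\chi k_f\frac{p-1}{p}\int_\Omega\me^{p+1}\,\dx$ into $-\mu\int_\Omega\me^{p+1}\,\dx$ requires $\mu\gtrsim\chi k_f$, i.e.\ a largeness condition on $\mu/\chi$ of the type appearing in \cite{cao}, which Theorem \ref{thm1} does not assume. The paper avoids both problems by never eliminating $\nabla\ce$: it keeps the term $\int_\Omega\me^{p-\gamma+1}|\nabla\ce|^2\,\dx$ (Lemma \ref{mean1}), derives a companion differential inequality for $\int_\Omega|\nabla\ce|^{2q}\,\dx$ (Lemma \ref{lem:nabla_c}), and controls the two coupling terms by H\"older, Sobolev and Gagliardo--Nirenberg (Lemmas \ref{lemm:primo_incubo}--\ref{lemm:secondo_incubo}), with \eqref{cod_gamma} entering through the existence of admissible exponents $p,q$ (Lemma \ref{exponents}). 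Without this coupled functional $\int_\Omega\me^p\,\dx+\int_\Omega|\nabla\ce|^{2q}\,\dx$, your argument as written does not yield the $L^p$ bounds, and everything downstream (Moser iteration, $W^{1,\infty}$ bound on $\ce$, compactness) collapses.

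A secondary point: your $W^{1,\infty}$ bound for $\de$ by differentiating the explicit ODE formula requires a pointwise-in-space control of $\int_0^t h'(\me)\nabla\me\,\ds$, hence essentially an $L^\infty$ bound on $\nabla\me$, which is not available (the a priori estimates only give $L^2$-type control of $\nabla\me^{(p+\gamma-1)/2}$). Also, the strong $L^2_{loc}$ convergence of $\nabla\ce$ you invoke is more than what Aubin--Lions applied to $\me^{(p+\gamma-1)/2}$ delivers; the paper gets by with weak-$\ast$ convergence of $\nabla\ce$ combined with strong a.e.\ convergence of $\me$, which suffices to pass to the limit in the flux term $f(\me)\nabla\ce\cdot\nabla\phi$. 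These are repairable, but the $L^p$ step is a genuine gap in the proposed proof.
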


\section{Regularised non-degenerate system}\label{reg}
Existence and boundedness of global weak solutions to system \eqref{eq:main_Gen} will be proved by introducing a regularised (non-degenerate) problem for which we are able to construct a global classical solution. Moreover, the regularised system admits bounds that are independent of the regularisation parameter. This later allows to pass to the limit and obtain a solution to the original degenerate problem.
In order to do so, for $\ep \in (0, 1)$, we introduce the function $D_\ep$ defined by
\begin{equation}\label{depsilon}
    D_\ep (s) = D(s+\ep).
\end{equation}
Note that according to Assumptions \eqref{AssD1}-\eqref{ass_D} we have that $D_\ep (s) \geq k_D (s+\ep)^{\gamma-1}$ and $D_\ep(0)>0$. Moreover, we can introduce the primitive of $D_\ep$:
\begin{equation}\label{phiepsilon}
    \Phi_\epsilon (s) =\int_{0}^{s} D_{\ep}(\sigma) \dd \sigma =\Phi(s+\ep)-\Phi(\ep).
\end{equation}
Given a triple $\left(m_0,c_0,d_0\right)$ satisfying \eqref{indata} we introduce $\left(\me_0,\ce_0,\de_0\right)$ such that,  for some $\vartheta\in(0,1)$,
\begin{equation}\label{indataregul}
\me_0, \ce_0, \de_0\in C^{2+\vartheta}(\overline{\Omega}),\, 0\leq \de_0\leq 1, \,\mbox{  and }\,
\frac{\partial\me_0}{\partial\mathbf{n}}\bigg|_{\partial\Omega}=\frac{\partial\ce_0}{\partial\mathbf{n}}\bigg|_{\partial\Omega}=0, 
\end{equation}
and 
\begin{equation}\label{indataregul_conv}
\me_0\rightarrow m_0, \ce_0\rightarrow c_0, \de_0\rightarrow d_0,\,\mbox{ in $C(\bar{\Omega})$ and }\, \nabla\ce_0\rightarrow \nabla c_0, \nabla\de_0\rightarrow \nabla d_0 \mbox{ in }\, L^2(\Omega).
\end{equation}

For $T>0$, we consider the following regularised system
\begin{subequations}
\label{eq:regulare}
\begin{align}
\label{eq:regulare_m}
&\partial_t \me  = \nabla\cdot\left(D_\ep(\me)\nabla\me\right) - \chi \nabla \cdot \left(f(\me)\nabla \ce\right)+M(\me), \\
\label{eq:regulare_c}
&\partial_t \ce  = \Delta \ce + \de - \ce + \me, \\
\label{eq:regulare_d}
&\partial_t \de  = h(\me)\left(1-\de\right),
\end{align}
\end{subequations}
with $(x,t)\in\Omega_{T}$, under the initial conditions
\begin{equation}\label{in_reg}
    \me(x,0)=\me_0(x),\, \ce(x,0)=\ce_0(x),\, \de(x,0)=\de_0(x).
\end{equation}
System \eqref{eq:regulare} is endowed with the Neumann boundary conditions
\begin{equation}\label{neumbcregulare}
\frac{\partial\me}{\partial\mathbf{n}}\bigg|_{\partial\Omega}=\frac{\partial\ce}{\partial\mathbf{n}}\bigg|_{\partial\Omega}=0.
\end{equation}

Before going further, we first state a result on the local existence in time of classical solutions to \eqref{eq:regulare}, which can be attained by employing well-known fixed point arguments and standard parabolic regularity, see for instance \cite{tao2008,winkler2008,taowin11} for similar results in different type of models. The proof can be found in Appendix \ref{sec:appendix united}.

\begin{prop}\label{locexistregulare}
Let $\ep\in(0,1)$ and $\chi>0$. Assume that the non-negative functions $\me_0, \ce_0$ and $\de_0$ satisfy \eqref{indataregul} for some $\vartheta\in(0,1)$. Consider $D_\ep$ as in \eqref{depsilon}. Then there exists a maximal existence time $\Te\in(0, \infty]$ and a triple of non-negative functions 
\begin{align*}
\me\in C^0(\bar{\Omega}\times[0,\Te))&\cap C^{2,1}(\bar{\Omega}\times(0, \Te)),\notag \\
\ce\in C^0(\bar{\Omega}\times[0,\Te))&\cap C^{2,1}(\bar{\Omega}\times(0, \Te)),\notag\\
\de\in C^{2,1}(\bar{\Omega}&\times(0, \Te))
\end{align*}
that solves \eqref{eq:regulare} classically on $\Omega \times (0,\Te)$ and satisfies $0\leq \de\leq\|\de_0\|_{L^{\infty}(\Omega)}$, $\me\geq0$ and $\ce\geq 0$ in $\Omega\times(0, \Te)$. Moreover, either $\Te = +\infty$, or 
\begin{equation}\label{localtoinf}
    \|\me(\cdot, t)\|_{\Li}+\|\ce(\cdot, t)\|_{\Wi}+\|\de(\cdot, t)\|_{\Li}\rightarrow\infty \ \ \mbox{as} \ \ t\nearrow\Te.
\end{equation}
\end{prop}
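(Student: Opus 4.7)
The plan is a standard contraction–mapping argument combined with classical parabolic Schauder theory. The crucial feature that makes this routine is that $D_\ep$ is uniformly positive (by \eqref{AssD2}, $D_\ep(s)\ge k_D\ep^{\gamma-1}>0$), so equation \eqref{eq:regulare_m} is uniformly parabolic; equation \eqref{eq:regulare_c} is semilinear parabolic; and equation \eqref{eq:regulare_d} is merely an ODE in time with $x\in\Omega$ as a parameter, which can be integrated explicitly.

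First, fix $T\in(0,1]$ and $R>0$ to be chosen, and on the closed set
\[
\mathcal{X}_T=\Bigl\{(\bar m,\bar c)\in C^{\vartheta,\vartheta/2}(\bar\Omega\times[0,T])^2:\,\|\bar m\|_{C^{\vartheta,\vartheta/2}}+\|\bar c\|_{C^{\vartheta,\vartheta/2}}\le R,\ (\bar m,\bar c)(\cdot,0)=(\me_0,\ce_0)\Bigr\}
\]
I would define a solution map $\Psi(\bar m,\bar c)=(m,c)$ as follows. Given $\bar m$, equation \eqref{eq:regulare_d} is solved explicitly by
\[
\de(x,t)=1-(1-\de_0(x))\exp\Bigl(-\!\int_0^t h(\bar m(x,s))\,ds\Bigr),
\]
which yields $0\le \de\le 1$ whenever $0\le \de_0\le 1$ (so in particular $\de\le\|\de_0\|_{L^\infty}$ is compatible with the claim once $\|\de_0\|_{L^\infty}\ge \de_0$ evolves monotonically toward $1$—I would remark on this and use $\de\le 1$), and $\de$ inherits Hölder regularity from $\bar m$ and $\de_0$. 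Next, plug $\bar m$ and $\de$ into the linear Neumann parabolic problem for $c$ and apply Ladyzhenskaya–Solonnikov–Ural'tseva Schauder estimates (Theorem IV.5.3 of their monograph) to get a unique $c\in C^{2+\vartheta,1+\vartheta/2}$; in particular $\nabla c\in C^{\vartheta,\vartheta/2}$. Finally, solve the quasilinear (but uniformly parabolic) problem for $m$ with coefficients frozen from $\bar m$ (or treated fully quasilinearly via Amann's theory) and source $-\chi\nabla\!\cdot\!(f(\bar m)\nabla c)+M(\bar m)$, again by Schauder theory, obtaining $m\in C^{2+\vartheta,1+\vartheta/2}$.

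Choosing $R$ in terms of $\|\me_0\|_{C^{2+\vartheta}},\|\ce_0\|_{C^{2+\vartheta}},\|\de_0\|_{C^{2+\vartheta}}$ and then $T=T_0>0$ small, standard parabolic estimates show $\Psi(\mathcal X_{T_0})\subset \mathcal X_{T_0}$ and that $\Psi$ is a contraction in the weaker norm $C^0(\bar\Omega\times[0,T_0])^2$. Banach's fixed point theorem delivers $(m,c)$ with $(m,c)=\Psi(m,c)$, hence together with the associated $\de$ a classical solution on $[0,T_0]$ with the stated regularity. Non-negativity of $m$ follows from the parabolic maximum principle, using $f(0)=0$ from \eqref{ass_f} (so the chemotactic flux vanishes where $\me=0$) and $M(0)=0$ from \eqref{ass_M}; non-negativity of $c$ follows because the sources $\me+\de\ge 0$.

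Finally, let $\Te$ be the supremum of times for which such a classical solution exists; uniqueness (also obtained from the contraction) lets one extend the solution consistently up to $\Te$. If $\Te<\infty$ and the quantity in \eqref{localtoinf} stayed bounded on $[0,\Te)$, then picking $t_0<\Te$ close to $\Te$ and restarting the fixed-point argument with initial datum $(m(\cdot,t_0),c(\cdot,t_0),d(\cdot,t_0))$ (whose $C^{2+\vartheta}$-norms are controlled by parabolic bootstrap from the $L^\infty$–$W^{1,\infty}$ bound and the uniform parabolicity) one produces an extension beyond $\Te$, contradicting maximality. The main technical nuisance is bookkeeping Hölder regularity so that $\nabla\!\cdot\!(f(\bar m)\nabla c)$ lies in the admissible class for Schauder estimates in the $m$–equation, and verifying the compatibility of the initial data with the Neumann condition at $t=0$ in each step; both are standard given \eqref{indataregul} and the regularisation, so the argument goes through verbatim as in \cite{tao2008,winkler2008,taowin11} cited in the text.
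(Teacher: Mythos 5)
Your argument is correct in substance but follows a genuinely different fixed-point route from the paper. The paper's proof (Appendix~\ref{sec:appendix united}) freezes $\bar m$ in an $L^\infty$-ball $X=\{f\le K+1\}$, solves the $\de$-ODE, then the linear $c$-equation, then the $m$-equation with truncated coefficients $\hat D_\ep(\bar m)$, and closes the argument with the \emph{Schauder} fixed point theorem: the solution map $S$ is shown to be continuous with $\overline{S(X)}$ compact in $L^\infty$ thanks to an interior H\"older estimate $\|m\|_{C^{\alpha,\alpha/2}}\le k_3$, and smallness of $T$ (via $\|m\|_{L^\infty}\le\|\me_0\|_{L^\infty}+k_3T^{\alpha/2}$) guarantees $S(X)\subset X$. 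You instead run a Banach contraction on a H\"older ball, using full $C^{2+\vartheta,1+\vartheta/2}$ Schauder estimates and contracting in the weaker $C^0$ norm. Both are standard and both hinge on the two structural facts you correctly isolate: uniform parabolicity of the $\me$-equation ($D_\ep(s)\ge k_D\ep^{\gamma-1}>0$) and the explicit solvability of the $\de$-equation. The contraction route buys uniqueness of the local solution for free (the paper's Schauder argument does not provide it), at the price of heavier bookkeeping of H\"older regularity and compatibility conditions; the paper's route needs only $L^\infty$-type control of the iterates plus the truncation $\hat D_\ep$ to keep coefficients bounded. One small inaccuracy on your side: \eqref{ass_M} only gives $M(s)\le\mu s(1-s)$, hence $M(0)\le 0$ by continuity, not $M(0)=0$; nonnegativity of $m$ via the comparison principle implicitly requires $M(0)\ge 0$, a point the paper's own one-line appeal to the comparison principle also glosses over. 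Your restart argument for the dichotomy \eqref{localtoinf}, resting on parabolic bootstrap from the $L^\infty$--$W^{1,\infty}$ bounds, is the same ``standard extensibility argument'' the paper invokes.
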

According to the above existence theory, for any $s\in(0,\Te)$, $(\me(\cdot, s),\ce(\cdot, s),\de(\cdot, s))\in C^2(\bar{\Omega})$. We can assume that there exists a positive constant $K$ such that
\begin{equation}
    \|\me_0\|_{C^2(\bar{\Omega})}+\|\ce_0\|_{C^2(\bar{\Omega})}+\|\de_0\|_{C^2(\bar{\Omega})}\leq K.
\end{equation}

\subsection{A-priori estimates}
In this section, we establish a series of uniform in $\ep$ a-priori estimates for solutions of system \eqref{eq:regulare}. Firstly, we provide a bound on the total mass of $\me$.

\begin{lem}\label{boundlem}
There exists a positive constant $K_0$ only depending on $\Omega$ and $ \|m_{\ep0}\|_{\Luno}$ such that solutions $(\me, \ce, \de)$ of \eqref{eq:regulare} satisfies 
\begin{equation}\label{es:l1me}
    \|\me(\cdot, t)\|_{\Luno}\leq K_0\ \ \mbox{for all} \ \ t\in\Tin.
\end{equation}
Moreover, if $M=0$
\begin{equation}\label{es:l1me_con}
    \|\medot\|_{L^1(\Omega)}=\|\me_0\|_{L^1(\Omega)} \ \ \mbox{for all} \ \ t\in\Tin.
\end{equation}
\end{lem}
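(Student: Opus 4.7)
The plan is to integrate the first equation of \eqref{eq:regulare} over $\Omega$, eliminate the boundary flux terms via the Neumann conditions \eqref{neumbcregulare}, and then reduce the bound to a scalar logistic comparison for $y(t) := \|\me(\cdot,t)\|_{L^1(\Omega)}$.

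First I would apply the divergence theorem to
\begin{equation*}
\frac{d}{dt}\into \me \,\dx = \into \nabla\cdot\!\bigl(D_\ep(\me)\nabla\me - \chi f(\me)\nabla\ce\bigr)\,\dx + \into M(\me)\,\dx.
\end{equation*}
Both surface integrals vanish on $\partial\Omega$: the diffusive one because $\partial\me/\partial\mathbf{n}=0$, and the chemotactic one because $\partial\ce/\partial\mathbf{n}=0$, with $f(\me)$ well-defined and continuous up to the boundary by the classical regularity provided in Proposition \ref{locexistregulare}. Hence the mass balance collapses to
\begin{equation*}
y'(t) = \into M(\me)\,\dx,
\end{equation*}
which immediately proves the conservation statement \eqref{es:l1me_con} in the case $M\equiv 0$.

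For the general bound I would invoke \eqref{ass_M}, which (extended by continuity at $s=0$ using $M\in C^1([0,\infty))$) yields the pointwise estimate $M(\me)\leq \mu \me(1-\me)$; combined with the Cauchy--Schwarz inequality $\into \me \,\dx \leq |\Omega|^{1/2} \bigl( \into \me^2 \,\dx \bigr)^{1/2}$, this gives the logistic differential inequality
\begin{equation*}
y'(t) \leq \mu y(t) - \frac{\mu}{|\Omega|}\, y(t)^2.
\end{equation*}

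Since the right-hand side is non-positive whenever $y(t)\geq |\Omega|$, a standard ODE comparison produces $y(t)\leq \max\{y(0),|\Omega|\}$ for all $t\in\Tin$, and by \eqref{indataregul_conv} the quantity $\|\me_0\|_{L^1(\Omega)}$ is controlled uniformly in $\ep$ by $\|m_0\|_{L^1(\Omega)}$; this delivers \eqref{es:l1me} with a constant $K_0$ depending only on $\|m_0\|_{L^1(\Omega)}$ and $|\Omega|$. I do not foresee a genuine obstacle here: the argument is a routine $L^1$ energy identity, and the only point that deserves a brief check is the vanishing of the chemotactic boundary flux, which is immediate from the $C^{2,1}$ regularity up to $\partial\Omega$ guaranteed by Proposition \ref{locexistregulare} together with $\partial\ce/\partial\mathbf{n}=0$.
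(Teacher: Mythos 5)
Your proposal is correct and follows essentially the same route as the paper: a direct spatial integration of \eqref{eq:regulare_m}, with the boundary fluxes killed by the Neumann conditions and the classical regularity of Proposition \ref{locexistregulare}, followed by an ODE comparison argument exploiting the logistic bound in \eqref{ass_M}, with conservation of mass in the case $M\equiv 0$ as an immediate byproduct. The paper leaves the comparison step implicit; your Cauchy--Schwarz reduction to the scalar logistic inequality is exactly the intended filling-in of that detail.
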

\begin{proof}
The regularity stated in Proposition \ref{locexistregulare}, together with the Neumann boundary conditions in \eqref{neumbcregulare} allow a direct integration of \eqref{eq:regulare_m}. Then, \eqref{es:l1me} follows by an ODE comparison argument. Finally, if the reaction term $M$ is zero, then the equation for $\me$ is in divergence form, then conservation of mass is a straightforward consequence of the previous integration.
\end{proof}

We start showing that solutions to \eqref{eq:regulare_d} remain bounded.

\begin{lem}\label{de_sign}
For any $p\geq1$ the solution $(\me, \ce, \de)$ to \eqref{eq:regulare} satisfies
\begin{equation}\label{stima_sign}
   \de\leq 1, \mbox{ for all } t\in (0, \Te),\,x\in\Omega.
\end{equation}
\end{lem}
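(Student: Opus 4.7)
The plan is to treat the third equation of \eqref{eq:regulare} as a linear ordinary differential equation in time with $x \in \Omega$ acting only as a parameter. Because $\de$ is a classical solution by Proposition \ref{locexistregulare} and $m_\epsilon$ is continuous on $\overline{\Omega}\times[0,T_{max})$, the coefficient $h(\me(x,\cdot))$ is well defined as a non-negative continuous function of $t$ for each fixed $x$.

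Specifically, I would introduce the auxiliary quantity $w_\epsilon(x,t) := 1-\de(x,t)$, which satisfies
\begin{equation*}
\partial_t w_\epsilon(x,t) = -h(\me(x,t))\, w_\epsilon(x,t), \qquad w_\epsilon(x,0) = 1 - \de_0(x).
\end{equation*}
From the compatibility condition \eqref{indataregul} we have $0 \leq \de_0 \leq 1$, so $w_\epsilon(x,0) \geq 0$. Integrating the linear ODE explicitly yields
\begin{equation*}
w_\epsilon(x,t) = \bigl(1-\de_0(x)\bigr)\exp\!\left(-\int_0^t h(\me(x,s))\, \mathrm{d}s\right).
\end{equation*}
Assumption \eqref{ass_h} guarantees $h\geq 0$, and Proposition \ref{locexistregulare} gives $\me \geq 0$, so the exponential factor is bounded between $0$ and $1$. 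Combining these observations with $w_\epsilon(x,0)\geq 0$ delivers $w_\epsilon(x,t) \geq 0$ for every $(x,t)\in\Omega\times(0,T_{max})$, that is, $\de(x,t)\leq 1$, which is the desired conclusion \eqref{stima_sign}.

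There is no real obstacle here: the argument is essentially a pointwise-in-$x$ ODE comparison. The only mild subtlety worth flagging is the role of the regularity coming from Proposition \ref{locexistregulare}, which ensures $\me$ is continuous so that the integral $\int_0^t h(\me(x,s))\,\mathrm{d}s$ makes sense classically and the explicit representation of $w_\epsilon$ is justified. The parameter $p\geq 1$ appearing in the statement plays no role in the proof and is presumably a remnant of a more general estimate proved in the same spirit later on.
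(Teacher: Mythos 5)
Your proof is correct, but it follows a genuinely different route from the paper. The paper's argument is of Stampacchia type: it introduces a smooth approximation $\eta_{-,\delta}$ of the negative part function, computes
\[
\frac{\dd}{\dt}\into \eta_{-,\delta}(1-\de)\,\dx=-\into \eta'_{-,\delta}(1-\de)\,h(\me)(1-\de)\,\dx\leq 0,
\]
and concludes that $1-\de$ preserves its sign; to make this fully rigorous one still has to let $\delta\to0$, use $0\leq\de_0\leq1$ to see that the limiting functional $\into(1-\de)_-\,\dx$ starts at zero and hence vanishes for all times, and then upgrade the a.e.\ statement to a pointwise one by continuity. You instead exploit the fact that \eqref{eq:regulare_d} is, for each fixed $x$, a \emph{linear} ODE in $t$ for $w_\ep=1-\de$, and write down the explicit exponential representation, from which $w_\ep\geq0$ is immediate once $w_\ep(\cdot,0)=1-\de_0\geq0$. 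Your version is more elementary and delivers the pointwise bound directly, with no approximation or limiting step; note that for the conclusion $\de\leq1$ you do not even need $h\geq0$ (the exponential is positive regardless), whereas the paper's sign computation does use it. The paper's functional argument, on the other hand, is the one that would survive if the $d$-equation acquired a spatial coupling (e.g.\ a diffusion term) destroying the explicit formula. Your closing remark that the parameter $p\geq1$ in the statement is vestigial is also accurate: it plays no role in either proof and appears to be carried over from the neighbouring Lemma \ref{de1}.
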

\begin{proof} Consider $\eta_{-,\delta}$ a smooth approximation of the negative part function, for some $\delta >0$. Then,
\begin{align*}
    \frac{\dd}{\dt}\into \eta_{-,\delta}\left(1-\de\right)\dx&=-\into \eta'_{-,\delta}\left(1-\de\right)\partial_t\de\dx\\
    &=-\into \eta'_{-,\delta}\left(1-\de\right)h(\me)\left(1-\de\right)\dx\leq 0,
\end{align*}
thus, $1-\de$ maintains its sign along the evolution, uniformly respect to the regularisation.
\end{proof}

By multiplying equation \eqref{eq:regulare_d} by $\de^{p-1}$, using assumptions on $h$ in \eqref{ass_h} and Young's inequality we are able to produce the following estimate.
\begin{lem}\label{de1}
For any $p\geq1$ the solution $(\me, \ce, \de)$ to \eqref{eq:regulare} satisfies
\begin{equation}\label{stima_0}
     \frac{1}{p}\frac{\dd}{\dt}\into \de^p \dx  \leq \frac{k_h^p}{p}\into \me^p\dx + \frac{p-1}{p}\into \de^p \dx,
\end{equation}
for all $t\in (0, \Te)$. In particular
\begin{equation}\label{stima_0_1}
    \|\de\|_{L^1(\Omega)}\leq k_h K_0,
\end{equation}
for all $t\in (0, \Te)$, where $K_0$ is the constant introduced in \eqref{es:l1me}.
\end{lem}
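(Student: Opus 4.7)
The plan is precisely the one telegraphed by the hint in the statement: test the ODE \eqref{eq:regulare_d} against $\de^{p-1}$ and exploit the sign information $0\leq \de \leq 1$ from Lemma~\ref{de_sign}. Multiplying $\partial_t \de = h(\me)(1-\de)$ by $\de^{p-1}$ and integrating over $\Omega$ turns the left-hand side into $\frac{1}{p}\frac{\dd}{\dt}\into \de^p \,\dx$, while the right-hand side becomes $\into h(\me)(1-\de)\de^{p-1}\,\dx$. Since by Lemma~\ref{de_sign} one has $0\leq 1-\de \leq 1$, the factor $(1-\de)$ can be bounded above by $1$ without changing the direction of the inequality, leaving $\into h(\me)\de^{p-1}\,\dx$ as the upper bound.

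Next I would read the assumption \eqref{ass_h} as supplying a linear growth control on $h$: from $h'\leq k_h$ together with the implicit modelling requirement $h(0)=0$ (the damage rate vanishing at zero macrophage density), the mean value theorem yields $h(s)\leq k_h s$ for every $s\geq 0$. Substituting this and then applying Young's inequality with conjugate exponents $p$ and $p/(p-1)$ to the product $k_h \me \cdot \de^{p-1}$ gives the pointwise estimate
\begin{equation*}
k_h\,\me\,\de^{p-1}\leq \frac{k_h^p}{p}\me^p+\frac{p-1}{p}\de^p,
\end{equation*}
whose integration over $\Omega$ produces exactly \eqref{stima_0}.

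For \eqref{stima_0_1} I would specialise the previous step to $p=1$: the term $\frac{p-1}{p}\into \de^p\,\dx$ drops out, leaving $\frac{\dd}{\dt}\into \de\,\dx\leq k_h\into \me\,\dx$. Inserting the uniform mass control \eqref{es:l1me} from Lemma~\ref{boundlem} immediately closes the estimate with the constant $k_h K_0$. No step is technically difficult; the only point that really deserves attention is the implicit hypothesis $h(0)=0$, without which the linear growth $h(s)\leq k_h s$, and hence the clean form of Young's splitting and the final dependence of the $L^1$-bound on $k_h$ and $K_0$ alone, would fail.
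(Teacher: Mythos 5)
Your derivation of \eqref{stima_0} follows exactly the route the paper intends (the paper gives only the one-line indication ``multiply \eqref{eq:regulare_d} by $\de^{p-1}$, use \eqref{ass_h} and Young's inequality''): test against $\de^{p-1}$, discard the factor $(1-\de)\leq 1$, use the linear growth of $h$, and split with Young's inequality with exponents $p$ and $p/(p-1)$. Your remark that \eqref{ass_h} only gives $h(s)\leq h(0)+k_h s$, so that the clean bound $h(s)\leq k_h s$ silently requires $h(0)=0$, is a legitimate observation about an implicit hypothesis (compare with \eqref{ass_f}, where $f(0)=0$ is stated explicitly); handling it as you do is fine.

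There is, however, a genuine gap in your last step. Setting $p=1$ gives $\frac{\dd}{\dt}\into \de\,\dx\leq k_h\into\me\,\dx\leq k_h K_0$, which is a bound on the \emph{time derivative} of $\|\de\|_{L^1(\Omega)}$, not on the quantity itself; integrating it only yields $\|\de(\cdot,t)\|_{L^1(\Omega)}\leq\|\de_0\|_{L^1(\Omega)}+k_h K_0\,t$, which is not the time-uniform bound \eqref{stima_0_1}. Since $h\geq 0$ and $\de\leq 1$, the map $t\mapsto\into\de\,\dx$ is non-decreasing, so no Gr\"onwall-type absorption is available from this differential inequality alone; the only uniform-in-time control one gets directly is $\|\de\|_{L^1(\Omega)}\leq|\Omega|$ from Lemma \ref{de_sign} (and \eqref{stima_0_1} as stated cannot even hold if $\|\de_0\|_{L^1(\Omega)}>k_h K_0$). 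This imprecision is arguably already present in the paper, which offers no proof of \eqref{stima_0_1}, but your claim that the mass bound ``immediately closes the estimate'' does not follow as written and should either be replaced by the $|\Omega|$-bound or by the time-dependent estimate above.
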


At this point we want to derive a uniform upper bound for $\me$, which represents the key to obtain all the estimates needed. 

\begin{lem}\label{mean1}
Let $\mu,\chi>0$ and $\gamma>1$ be given constants. Then for any $p>1$ the solution $(\me, \ce, \de)$ to \eqref{eq:regulare} satisfies
\begin{align}\label{stima_1}
\begin{aligned}
     \frac{1}{p}\frac{\dd}{\dt}\into \me^p \dx & + \frac{p-1}{2}\into D_\ep(\me)\me^{p-2}|\nabla\me|^2\dx + \frac{k_D(p-1)}{(p+\gamma-1)^2}\into |\nabla \me ^{\frac{p+\gamma-1}{2}}|^2\dx\\
     &\leq \frac{k_f^2}{ k_D}\chi^2(p-1)\into \me^{p-\gamma+1}|\nabla \ce|^2 \dx + \mu \into \me^p\dx - \mu\into \me^{p+1}\dx
\end{aligned}
\end{align}
for all $t\in (0, \Te)$.
\end{lem}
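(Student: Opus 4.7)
The plan is to obtain the inequality by testing the first equation of the regularised system \eqref{eq:regulare_m} against $m_\varepsilon^{p-1}$ and integrating over $\Omega$. The time derivative on the left produces $\tfrac{1}{p}\tfrac{d}{dt}\int_\Omega m_\varepsilon^p\,dx$, while integration by parts on the two divergence terms (using the Neumann boundary conditions \eqref{neumbcregulare}, which survive the regularisation) yields
$$-(p-1)\int_\Omega D_\varepsilon(m_\varepsilon)\,m_\varepsilon^{p-2}|\nabla m_\varepsilon|^2\,dx+\chi(p-1)\int_\Omega f(m_\varepsilon)\,m_\varepsilon^{p-2}\nabla m_\varepsilon\cdot\nabla c_\varepsilon\,dx.$$
The reaction contribution $\int_\Omega M(m_\varepsilon)\,m_\varepsilon^{p-1}\,dx$ is handled immediately by assumption \eqref{ass_M}, since $M(s)\le \mu s(1-s)$ gives the bound $\mu\int m_\varepsilon^p - \mu\int m_\varepsilon^{p+1}$ appearing on the right-hand side.

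The main task is to split the dissipation term $(p-1)\int D_\varepsilon m_\varepsilon^{p-2}|\nabla m_\varepsilon|^2$ into three pieces, say $\tfrac{p-1}{4}+\tfrac{p-1}{2}+\tfrac{p-1}{4}$, and distribute them appropriately. I would use the first quarter to absorb the chemotactic cross term via Young's inequality in the form $ab\le \tfrac14 a^2+b^2$ applied to
$$a=\sqrt{(p-1)D_\varepsilon}\,m_\varepsilon^{(p-2)/2}|\nabla m_\varepsilon|,\qquad b=\sqrt{p-1}\,\chi\,\frac{|f(m_\varepsilon)|\,m_\varepsilon^{(p-2)/2}}{\sqrt{D_\varepsilon}}|\nabla c_\varepsilon|,$$
which gives
$$\chi(p-1)\!\int_\Omega f(m_\varepsilon)m_\varepsilon^{p-2}\nabla m_\varepsilon\cdot\nabla c_\varepsilon\,dx\le\frac{p-1}{4}\!\int_\Omega D_\varepsilon m_\varepsilon^{p-2}|\nabla m_\varepsilon|^2\,dx+\chi^2(p-1)\!\int_\Omega\frac{f^2(m_\varepsilon)}{D_\varepsilon}m_\varepsilon^{p-2}|\nabla c_\varepsilon|^2\,dx.$$
Then the growth bound $|f(s)|\le k_f s$ from \eqref{ass_f} and the lower bound $D_\varepsilon(s)\ge k_D s^{\gamma-1}$ from \eqref{AssD2} (note $(s+\varepsilon)^{\gamma-1}\ge s^{\gamma-1}$ because $\gamma>1$) convert the last term into $\tfrac{k_f^2}{k_D}\chi^2(p-1)\int m_\varepsilon^{p-\gamma+1}|\nabla c_\varepsilon|^2$, matching the stated form.

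The second piece $\tfrac{p-1}{2}\int D_\varepsilon m_\varepsilon^{p-2}|\nabla m_\varepsilon|^2$ is simply kept on the left-hand side. For the third piece I would again invoke $D_\varepsilon\ge k_D m_\varepsilon^{\gamma-1}$ to obtain
$$\frac{p-1}{4}\int_\Omega D_\varepsilon m_\varepsilon^{p-2}|\nabla m_\varepsilon|^2\,dx\ge\frac{k_D(p-1)}{4}\int_\Omega m_\varepsilon^{p+\gamma-3}|\nabla m_\varepsilon|^2\,dx,$$
and then rewrite the resulting integrand through the chain-rule identity
$$|\nabla m_\varepsilon^{(p+\gamma-1)/2}|^2=\Bigl(\tfrac{p+\gamma-1}{2}\Bigr)^{\!2}m_\varepsilon^{p+\gamma-3}|\nabla m_\varepsilon|^2,$$
which produces exactly the coefficient $\tfrac{k_D(p-1)}{(p+\gamma-1)^2}$ appearing in the lemma. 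Combining the three contributions yields \eqref{stima_1}.

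I do not expect any serious obstacle: the only subtle point is choosing the right split of the dissipation (so that the final coefficients match) and handling the pointwise lower bound on $D_\varepsilon$ uniformly in $\varepsilon$, which follows from \eqref{AssD2} and the hypothesis $\gamma>1$ guaranteeing $(s+\varepsilon)^{\gamma-1}\ge s^{\gamma-1}$. The argument is formal at points where $m_\varepsilon=0$, but this is harmless thanks to the classical regularity of $m_\varepsilon$ provided by Proposition~\ref{locexistregulare} (alternatively one may work with $(m_\varepsilon+\delta)^{p-1}$ and let $\delta\to 0$).
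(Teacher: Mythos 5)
Your proposal is correct and follows essentially the same route as the paper: test with $\me^{p-1}$, integrate by parts using the Neumann conditions, control the reaction term via \eqref{ass_M}, absorb the chemotactic cross term by Young's inequality into a quarter of the diffusive dissipation using $|f(s)|\le k_f s$ and $D_\ep(s)\ge k_D s^{\gamma-1}$, and convert another quarter into the $\bigl|\nabla \me^{\frac{p+\gamma-1}{2}}\bigr|^2$ term via the chain rule. The only (immaterial) difference is bookkeeping: the paper first splits the dissipation in half, lower-bounds one half by $\tfrac{k_D(p-1)}{2}\int\me^{p+\gamma-3}|\nabla\me|^2\,\dx$, and then lets the Young step consume half of that, whereas you split $\tfrac14+\tfrac12+\tfrac14$ directly; both yield the identical coefficients in \eqref{stima_1}.
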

\begin{proof}
We start by multiplying  equation \eqref{eq:regulare_m} by $\me^{p-1}$ and integrating in space where necessary, and obtain
\begin{align}\label{primo_passo}
\begin{aligned}
   &  \frac{1}{p}\frac{\dd}{\dt}\into \me^p \dx  =- (p-1)\into D_\ep(\me)\me^{p-2}|\nabla\me|^2\dx \\
     &-\chi\into \me^{p-1}\nabla\cdot (f(\me)\nabla \ce) \dx +\into \me^{p-1} M_\ep (\me)\dx := I_1 + I_2 + I_3,
\end{aligned}
\end{align}
for $t\in(0, \Te)$. We first estimate the first integral on the r.h.s. $I_1$ as follows
\begin{equation}\label{int_diff}
    I_1 \leq -\frac{p-1}{2}\into D_\ep(\me)\me^{p-2}|\nabla\me|^2\dx-\frac{k_D(p-1)}{2}\into \me^{p-2}(\me+\ep)^{\gamma-1}|\nabla \me|^2\dx,
\end{equation}
where we have used the bound from below on $D_\ep$. By an integration by parts in $I_2$, applying Young's inequality and assumption \eqref{ass_f}, we derive the following estimate
\begin{equation}\label{int_chemo}
    I_2 \leq \frac{k_D(p-1)}{4}\into \me^{\gamma+p-3}|\nabla\me|^2
\dx + \frac{k_f^2}{k_D}\chi^2(p-1)\into \me^{p-\gamma+1}|\nabla \ce|^2 \dx.\end{equation}
Invoking the bound \eqref{ass_M} in order to control $I_3$, and putting together \eqref{primo_passo}, \eqref{int_diff} and \eqref{int_chemo} we easily obtain \eqref{stima_1}.  
\end{proof}

\begin{lem}\label{lem:nabla_c}
Let $\mu,\chi>0$ and $\gamma>1$ be given constants. Then, for any $q\in[1,+\infty)$ the solution $(\me, \ce, \de)$ to \eqref{eq:regulare} satisfies
\begin{align}\label{stima_2}
\begin{aligned}
     \frac{1}{q}\frac{\dd}{\dt}\into |\nabla\ce|^{2q} \dx & + 2 \into |\nabla\ce|^{2q} \dx + \frac{q-1}{q^2}\into |\nabla|\nabla\ce|^q|^2 \dx\\
     &  \leq \left(2(q-1)+\frac{n}{2}\right) \into \left(\de + \me\right)^2|\nabla\ce|^{2q-2} \dx + C_1,
\end{aligned}
\end{align}
for all $t\in (0, \Te)$ and for some positive constant $C_1$ and independent from $\epsilon$.
\end{lem}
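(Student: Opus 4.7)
\smallskip

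\textbf{Plan of proof.}
The strategy is a standard $L^{2q}$ gradient estimate for the heat-type equation \eqref{eq:regulare_c}. I would set $w_\ep := |\nabla \ce|^2$ and first derive the pointwise parabolic identity
\begin{equation*}
\partial_t w_\ep \;=\; \Delta w_\ep \,-\, 2|D^{2}\ce|^{2} \,-\, 2w_\ep \,+\, 2\nabla \ce\cdot\nabla(\de+\me),
\end{equation*}
obtained by differentiating \eqref{eq:regulare_c} and using the algebraic identity $2\nabla\ce\cdot\nabla\Delta\ce = \Delta w_\ep - 2|D^{2}\ce|^{2}$. Multiplying by $w_\ep^{q-1}$ and integrating over $\Omega$ gives
\begin{equation*}
\frac{1}{q}\frac{\dd}{\dt}\into w_\ep^{q}\,\dx + 2\into w_\ep^{q}\,\dx + 2\into w_\ep^{q-1}|D^{2}\ce|^{2}\,\dx
= \into w_\ep^{q-1}\Delta w_\ep\,\dx + 2\into w_\ep^{q-1}\nabla\ce\cdot\nabla(\de+\me)\,\dx.
\end{equation*}

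\smallskip

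Next I would integrate by parts the first term on the right. Using $\nabla w_\ep^{q-1}\cdot\nabla w_\ep = \frac{4(q-1)}{q^{2}}|\nabla w_\ep^{q/2}|^{2}$, this produces the good dissipative quantity $-\frac{4(q-1)}{q^{2}}\int|\nabla|\nabla\ce|^{q}|^{2}\,\dx$ together with a boundary contribution $\int_{\partial\Omega}w_\ep^{q-1}\partial_{\mathbf n} w_\ep\,dS$. Because the boundary $\partial\Omega$ is smooth and $\ce$ satisfies the homogeneous Neumann condition \eqref{neumbcregulare}, one has the classical pointwise estimate $\partial_{\mathbf n}|\nabla\ce|^{2}\le 2\kappa_{\Omega}|\nabla\ce|^{2}$ on $\partial\Omega$, where $\kappa_{\Omega}$ depends only on the curvature of $\partial\Omega$. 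Applied together with a trace-interpolation inequality of the form $\|u\|_{L^{2}(\partial\Omega)}^{2}\le\delta\|\nabla u\|_{L^{2}(\Omega)}^{2}+C_{\delta}\|u\|_{L^{2}(\Omega)}^{2}$ to $u=w_\ep^{q/2}$, this yields
\begin{equation*}
\into w_\ep^{q-1}\Delta w_\ep\,\dx \le -\frac{4(q-1)}{q^{2}}\into|\nabla w_\ep^{q/2}|^{2}\,\dx + \delta\into|\nabla w_\ep^{q/2}|^{2}\,\dx + C_{\delta}\into w_\ep^{q}\,\dx,
\end{equation*}
and choosing $\delta$ sufficiently small absorbs the bad term into the good one, leaving a constant contribution $C_1$ for the lemma (with remaining lower order pieces reabsorbed by the $2\into w_\ep^q\,\dx$ on the left).

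\smallskip

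For the source term, integrate by parts using $\nabla\ce\cdot\mathbf{n}|_{\partial\Omega}=0$:
\begin{equation*}
2\into w_\ep^{q-1}\nabla\ce\cdot\nabla(\de+\me)\,\dx
= -2\into(\de+\me)\,w_\ep^{q-1}\Delta\ce\,\dx - 2(q-1)\into(\de+\me)\,w_\ep^{q-2}\nabla w_\ep\cdot\nabla\ce\,\dx.
\end{equation*}
For the first piece I would apply Young's inequality with coefficient $2/n$, combined with $|\Delta\ce|^{2}\le n|D^{2}\ce|^{2}$, so that $2\into w_\ep^{q-1}|D^{2}\ce|^{2}$ on the left is exactly cancelled and an $\frac{n}{2}\into(\de+\me)^{2}w_\ep^{q-1}$ term is generated. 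For the second piece I would apply Young's inequality to produce $\eta(q-1)\into w_\ep^{q-2}|\nabla w_\ep|^{2}\dx + \frac{q-1}{\eta}\into(\de+\me)^{2}w_\ep^{q-1}\dx$, converting the first integrand into $\frac{4\eta(q-1)}{q^2}\int|\nabla w_\ep^{q/2}|^2$ via the same identity used above; choosing $\eta=1/2$ leaves the coefficient $2(q-1)$ on $\int(\de+\me)^{2}w_\ep^{q-1}$ and the dissipative term $\frac{q-1}{q^2}\int|\nabla w_\ep^{q/2}|^2$ on the left, matching the statement.

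\smallskip

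\textbf{Main obstacle.} The delicate point is the boundary term generated by the Laplacian integration by parts: it does not vanish under Neumann conditions for $c_\ep$ because we are estimating $|\nabla c_\ep|^{2}$ rather than $c_\ep$ itself. One must invoke the smoothness of $\partial\Omega$ (through a curvature bound) and a trace-interpolation inequality to convert it into a piece absorbable by the dissipation plus a finite constant independent of $\ep$. The remaining task is simply a careful bookkeeping of constants in Young's inequality to preserve the precise coefficient $\frac{q-1}{q^{2}}$ of $\into|\nabla|\nabla\ce|^{q}|^{2}\,\dx$ on the left and to obtain the coefficients $2(q-1)$ and $\frac{n}{2}$ on the right.
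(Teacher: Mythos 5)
Your overall architecture coincides with the paper's: the Bochner-type identity $2\nabla\ce\cdot\nabla\Delta\ce=\Delta|\nabla\ce|^{2}-2|D^{2}\ce|^{2}$, testing with $|\nabla\ce|^{2q-2}$, cancelling the Hessian term against the source via Young's inequality combined with $|\Delta\ce|^{2}\le n|D^{2}\ce|^{2}$, and the same bookkeeping producing the coefficients $2(q-1)+\frac{n}{2}$ and $\frac{q-1}{q^{2}}$. The one place where your argument genuinely diverges is the boundary integral, and there it has a gap.

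The trace--interpolation inequality you invoke, $\|u\|_{L^{2}(\partial\Omega)}^{2}\le\delta\|\nabla u\|_{L^{2}(\Omega)}^{2}+C_{\delta}\|u\|_{L^{2}(\Omega)}^{2}$ applied to $u=|\nabla\ce|^{q}$, does not leave ``a constant contribution $C_1$'': it leaves the term $C_{\delta}\into|\nabla\ce|^{2q}\dx$, where $C_{\delta}\to\infty$ as $\delta\to 0$. This cannot be reabsorbed by the fixed term $2\into|\nabla\ce|^{2q}\dx$ on the left, and $\into|\nabla\ce|^{2q}\dx$ is precisely the quantity whose boundedness the whole chain of lemmas is designed to establish, so it may not be treated as controlled at this stage; if the boundary term contributes $+C_{\delta}\into|\nabla\ce|^{2q}\dx$ with $C_{\delta}>2$, the differential inequality loses the dissipative $|\nabla\ce|^{2q}$ term needed for the ODE comparison in Proposition \ref{prop:crucial_estimate}. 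The paper instead applies Lemma \ref{lemm:boundary}, whose conclusion is $\eta\into|\nabla|\nabla\ce|^{q}|^{2}\dx+C_{\eta}$ with $C_{\eta}$ a genuine constant; the price is the hypothesis $\into|\nabla\ce|\dx\le M$, i.e. one must interpolate all the way down to the $L^{1}$-norm of $\nabla\ce$ rather than stopping at $\||\nabla\ce|^{q}\|_{L^{2}(\Omega)}$. Accordingly, the paper first proves a uniform-in-$\ep$ bound $\|\nabla\ce(\cdot,t)\|_{L^{1}(\Omega)}\le k_{3}$ via the Duhamel representation for \eqref{eq:regulare_c} and $L^{p}$--$L^{q}$ estimates for the Neumann heat semigroup, using the mass bounds of Lemmas \ref{boundlem} and \ref{de1}. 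Your proposal contains neither this low-order a priori bound on $\nabla\ce$ nor an interpolation reaching it, so the boundary term is not actually closed: the curvature estimate $\partial_{\nu}|\nabla\ce|^{2}\le 2\kappa_{\Omega}|\nabla\ce|^{2}$ is fine, but everything after it needs to be replaced by the argument of Lemma \ref{lemm:boundary} together with the semigroup bound on $\|\nabla\ce\|_{L^{1}(\Omega)}$.
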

\begin{proof}
The proof of estimate \eqref{stima_2} is based on the one in \cite[Lemma 3.3]{li2016}, see also \cite[Proposition 3.2]{ishida2014} and \cite[Lemma 3.3]{tao2012} and it is reported in Appendix \ref{Proofs} for completeness. We observe only that the main difference from \cite[Lemma 3.3]{li2016} is the presence of $\de$ in the equation for $\ce$, which can be managed thanks to \eqref{stima_0_1}.
\end{proof}
Summing up the estimates obtained in Lemmas \ref{mean1} and \ref{lem:nabla_c} we easily obtain the following.
\begin{cor}
Let $\mu,\chi>0$ and $\gamma>1$ be given constants. Then, for any $p>1$ and for any $q\in[1,+\infty)$ the solution $(\me, \ce, \de)$ to \eqref{eq:regulare} satisfies
\begin{align}\label{stima_3}
\begin{aligned}
     \frac{\dd}{\dt}&\left(\into \me^p \dx+ \into |\nabla\ce|^{2q} \dx \right) + \frac{p(p-1)}{2}\into D_\ep(\me)\me^{p-2}|\nabla\me|^2\dx \\
     &+ \frac{k_Dp(p-1)}{(p+\gamma-1)^2}\into |\nabla \me ^{\frac{p+\gamma-1}{2}}|^2\dx+ 2q \into |\nabla\ce|^{2q} \dx + (q-1)\into |\nabla|\nabla\ce|^q|^2 \dx\\
     &\leq  \frac{k_f^2}{k_D}\chi^2 p(p-1)\into \me^{p-\gamma+1}|\nabla \ce|^2 \dx + q\left(2(q-1)+\frac{n}{2}\right) \into \left(\de + \me\right)^2|\nabla\ce|^{2q-2} \dx+C_1, 
\end{aligned}
\end{align}
for all $t\in (0, \Te)$, for a positive constant $C_1$  given in \eqref{stima_2} independent from $\epsilon$ and $p$. 
\end{cor}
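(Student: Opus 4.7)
The plan is to derive estimate \eqref{stima_3} by taking a linear combination of the two a-priori inequalities already established in Lemmas~\ref{mean1} and~\ref{lem:nabla_c}. Specifically, I would multiply \eqref{stima_1} by $p$ and \eqref{stima_2} by $q$ and then add the two resulting inequalities term by term. After this purely algebraic manipulation, the left-hand side matches exactly the left-hand side of \eqref{stima_3}; in the same way, the chemotactic cross-term $\frac{k_f^2}{k_D}\chi^2 p(p-1)\into \me^{p-\gamma+1}|\nabla\ce|^2\,\dx$ and the coupled contribution $q\bigl(2(q-1)+\tfrac{n}{2}\bigr)\into(\de+\me)^2|\nabla\ce|^{2q-2}\,\dx$ on the right-hand side appear with precisely the coefficients declared in \eqref{stima_3}.

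The only mismatch after this summation consists of the pair $\mu p\into \me^p\,\dx - \mu p\into \me^{p+1}\,\dx$, which comes from multiplying the last two terms of \eqref{stima_1} by $p$ but does not appear on the right-hand side of \eqref{stima_3}. To absorb this block into the residual constant, I would apply Young's inequality with conjugate exponents $\tfrac{p+1}{p}$ and $p+1$, namely $\me^p\leq \tfrac{p}{p+1}\me^{p+1} + \tfrac{1}{p+1}$, obtaining
\[
\mu p\into \me^p\,\dx - \mu p\into \me^{p+1}\,\dx \;\leq\; -\tfrac{\mu p}{p+1}\into \me^{p+1}\,\dx + \tfrac{\mu p\,|\Omega|}{p+1} \;\leq\; \mu\,|\Omega|.
\]
Since the final bound $\mu|\Omega|$ is independent of both $\ep$ and $p$, it can be harmlessly merged into the constant on the right-hand side (still denoted by $C_1$), at the cost of at most a dependence on $\mu$, $q$, $n$, and $|\Omega|$.

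I do not anticipate a serious obstacle: the two lemma estimates are already in a compatible form, so the summation together with a single application of Young's inequality is essentially mechanical. The only mild subtlety is to verify that, after absorbing the $\mu$-contributions, the constant on the right-hand side remains independent of $p$ (and of $\ep$), which is precisely what the bound $\mu|\Omega|$ above guarantees. Keeping this $p$-independence is important, since \eqref{stima_3} is meant to be iterated in $p$ in the subsequent bootstrap arguments that lead to the uniform $L^\infty$ bound on $\me$.
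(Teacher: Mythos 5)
Your proof is correct and follows essentially the same route as the paper, which simply sums the two lemmas without further comment (in particular the paper does not even spell out the absorption of the logistic block $\mu p\into\me^p\dx-\mu p\into\me^{p+1}\dx$, which you handle correctly via Young's inequality, keeping the resulting constant $\mu|\Omega|$ independent of $p$ and $\ep$). The only caveat concerns bookkeeping in the statement rather than in your argument: multiplying \eqref{stima_2} by $q$ produces the coefficient $\tfrac{q-1}{q}$ (not $q-1$) in front of $\into\bigl|\nabla|\nabla\ce|^q\bigr|^2\dx$ and the constant $qC_1$ (not $C_1$), so the left-hand side does not ``match exactly'' as you claim --- a harmless discrepancy, since the subsequent applications of \eqref{stima_3} only require some positive coefficient there.
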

The following Lemmas concern with the control of the terms on the r.h.s. of \eqref{stima_3}. The existence of exponents $p$ and $q$ that satisfy the conditions we are going to impose are guaranteed by Lemma \ref{exponents}, under the restriction \eqref{cod_gamma}.
\begin{lem}\label{lemm:primo_incubo}
Assume that $\gamma$ satisfies \eqref{cod_gamma}. Let $p,q>1$ be such that for $n\geq 2$
\begin{equation}\label{condpq_1}
   p>\gamma-\frac{n-2}{nq},\quad
\mbox{ and }\quad
   \frac{q(p-\gamma+1)}{q-1} \left(\frac{n-\frac{nq-n+2}{q(p-\gamma+1)}}{(p+\gamma-1)n+2-n}\right)<1, 
\end{equation}
and, for $n=1$ 
\begin{equation}\label{condpq_2}
 q(p-\gamma)+1>0,\quad \mbox{ and }\quad  \frac{(1+q(p-\gamma))}{(q-1)(p+\gamma)}<1.
\end{equation}
Then for any $\eta>0$ there exists a constant $C>0$ depending only on $\eta$, $p$ and $q$ such that
\begin{equation}\label{stima_4}
    \into \me^{p-\gamma+1}|\nabla\ce|^2\dx \leq \eta \into \bigl|\nabla\me^{\frac{p+\gamma-1}{2}}\bigr|^2\dx+\eta\into \bigl|\nabla|\nabla\ce|^q\bigr|^2\dx+C,
\end{equation}
for all $t\in(0,\Te)$.
\end{lem}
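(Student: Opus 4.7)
The plan is to bound the integrand by means of H\"older's inequality, then interpolate each factor using the Gagliardo--Nirenberg inequality anchored on the uniform $L^1$ bound for $\me$ provided by Lemma \ref{boundlem} for the macrophage factor, and on a standard $L^\sigma$ bound for $\nabla\ce$ (coming from parabolic regularity of the $\ce$-equation with $L^1$ source, available thanks to Lemmas \ref{boundlem} and \ref{de1}) for the chemical factor. A final Young's inequality with small parameter $\eta$ then absorbs the gradient contributions into the left-hand side.

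First I would write, after applying H\"older's inequality with exponents $q/(q-1)$ and $q$,
\begin{equation*}
\into \me^{p-\gamma+1}|\nabla\ce|^2\,\dx \leq \left(\into \me^{\frac{q(p-\gamma+1)}{q-1}}\,\dx\right)^{\frac{q-1}{q}} \left(\into |\nabla\ce|^{2q}\,\dx\right)^{\frac{1}{q}}.
\end{equation*}
Setting $u:=\me^{(p+\gamma-1)/2}$, the first factor is a power of $\|u\|_{L^{r_1}}$ with $r_1=\frac{2q(p-\gamma+1)}{(q-1)(p+\gamma-1)}$, and the Gagliardo--Nirenberg inequality with anchor norm $\|u\|_{L^{2/(p+\gamma-1)}}=\|\me\|_{L^1}^{(p+\gamma-1)/2}\leq K_0^{(p+\gamma-1)/2}$ yields a bound in terms of $\|\nabla u\|_{L^2}^{\theta_1}$, that is, a power of $\|\nabla\me^{(p+\gamma-1)/2}\|_{L^2}$. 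Analogously, setting $v:=|\nabla\ce|^q$, the second factor in the H\"older bound is $\|v\|_{L^2}^{2/q}$, and Gagliardo--Nirenberg anchored on a uniform $L^{s_2}$-bound for $\nabla\ce$ produces a power of $\|\nabla v\|_{L^2}=\|\nabla|\nabla\ce|^q\|_{L^2}$ times a uniformly bounded factor.

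Conditions \eqref{condpq_1} for $n\geq 2$ and \eqref{condpq_2} for $n=1$ are precisely the algebraic requirements that make the scheme close: the first inequality in each pair ensures that the Gagliardo--Nirenberg interpolation exponents $\theta_1,\theta_2$ lie in $[0,1]$, so that the interpolation is admissible, while the second inequality guarantees that the combined power of the two gradient terms, after accounting for the H\"older exponents $(q-1)/q$ and $1/q$, is strictly less than $2$. Under this latter condition a final Young's inequality with parameter $\eta$ splits the product into an $\eta$-multiple of $\int|\nabla\me^{(p+\gamma-1)/2}|^2\,\dx$ plus an $\eta$-multiple of $\int|\nabla|\nabla\ce|^q|^2\,\dx$ plus a constant $C(\eta,p,q)$, which is exactly \eqref{stima_4}. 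I expect the main obstacle to be the careful bookkeeping of the interpolation exponents: one has to verify both the admissibility of Gagliardo--Nirenberg and the final $<2$ requirement, which together are encoded in \eqref{condpq_1}--\eqref{condpq_2}; compatibility of these restrictions with the standing assumption \eqref{cod_gamma} on $\gamma$ is precisely the content of the auxiliary Lemma \ref{exponents}, which the authors invoke to produce admissible pairs $(p,q)$. A secondary technical point is the justification of the low-norm anchor on $\nabla\ce$, which follows from testing the equation for $\ce$ against a suitable power of $\ce$ and exploiting the uniform $L^1$-bounds on $\me$ and $\de$.
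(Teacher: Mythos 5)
Your overall architecture --- H\"older, then Gagliardo--Nirenberg anchored on the $L^1$ bound of $\me$ from Lemma \ref{boundlem} and on a low Lebesgue norm of $\nabla\ce$, then Young with small parameter --- is exactly the paper's, and for $n=1$ your H\"older splitting with exponents $\bigl(\tfrac{q}{q-1},q\bigr)$ coincides with the one used there. The gap is in the case $n\geq 2$, and specifically $n=3$: with your splitting, the macrophage factor is $\|u\|_{L^{r_1}(\Omega)}$ for $u=\me^{\frac{p+\gamma-1}{2}}$ and $r_1=\frac{2q(p-\gamma+1)}{(q-1)(p+\gamma-1)}$, and nothing in \eqref{condpq_1} keeps $r_1$ below the Sobolev-critical exponent $\frac{2n}{n-2}=6$. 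For instance $n=3$, $\gamma=\tfrac32$, $q=1.1$, $p=1.3$ satisfies both inequalities in \eqref{condpq_1} (the second evaluates to about $0.77$), yet $r_1\approx 9.8>6$, so the Gagliardo--Nirenberg inequality of Lemma \ref{Gag_Nir} with $R=2$ is inadmissible there (condition \eqref{condition_Gag_Nir} fails and the interpolation exponent exceeds $1$). Your claim that the first inequality of \eqref{condpq_1} ``ensures that the interpolation exponents lie in $[0,1]$'' is therefore not correct for your choice of exponents: what that inequality actually guarantees is only the lower admissibility $P\geq Q$, and it does so for the paper's target exponent, not yours. The conditions \eqref{condpq_1} are calibrated to a different H\"older splitting, so asserting that they are ``precisely'' what closes your scheme is the step that would fail.

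The paper resolves exactly this issue by choosing the asymmetric conjugate exponents $\alpha=\frac{nq}{nq-n+2-\theta}$ and $\beta=\frac{nq}{n-2+\theta}$ for a small $\theta>0$: then $|\nabla\ce|^{2\beta}=\bigl(|\nabla\ce|^{q}\bigr)^{2n/(n-2+\theta)}$ sits just below the critical level for $|\nabla\ce|^q\in W^{1,2}(\Omega)$, so that factor is handled by the Sobolev embedding plus the Poincar\'e inequality of Lemma \ref{lemm:poin} (anchored on $\||\nabla\ce|^q\|_{L^{s/q}(\Omega)}$ with $s<\frac{n}{n-1}$, bounded via Proposition \ref{Prop2}), contributing the full power $\|\nabla|\nabla\ce|^q\|_{L^2(\Omega)}^{2/q}$; the remaining exponent $(p-\gamma+1)\alpha$ on $\me$ is then small enough for Gagliardo--Nirenberg to be admissible, and the second inequality in \eqref{condpq_1} is exactly the statement $\frac{a(p-\gamma+1)}{p+\gamma-1}<\frac{q-1}{q}$ needed for the final Young step. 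If you want to keep your splitting, you would have to either restrict to $(p,q)$ for which $r_1\leq\frac{2n}{n-2}$ (which suffices for the application, since Proposition \ref{prop:crucial_estimate} only uses large $p,q$ from Lemma \ref{exponents}, but does not prove the lemma as stated) or re-derive the admissibility and the ``total gradient power $<2$'' conditions for your exponents and check they follow from \eqref{condpq_1} --- the latter happens to hold after a computation, but the former does not.
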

\begin{proof}
The proof is postponed in Appendix \ref{Proofs}.
\end{proof}

\begin{lem}\label{lemm:secondo_incubo}
Assume that $\gamma$ satisfies \eqref{cod_gamma}. Let $p,q>1$ be such that for $n\geq 2$
\begin{equation}\label{condpq_3}
    p>\max\left\{1,\frac{2q(n-2)}{2q+n-2}\right\},\quad \mbox{and}\quad \frac{1}{q}>\frac{2}{p+\gamma-1}\left(\frac{\frac{p+\gamma-1}{2}-\frac{(p+\gamma-1)(2q+n-2)}{4nq}}{\frac{p+\gamma-1}{2}+\frac{1}{n}-\frac{1}{2}}\right),
\end{equation}
and for $n=1$
\begin{equation}\label{condpq_4}
\frac{2q-1}{p+\gamma}<1.
\end{equation}
Then for any $\eta>0$ there exists a constant $C>0$ such that
\begin{equation}\label{stima_5}
    \into \left(\de + \me\right)^2|\nabla\ce|^{2q-2} \dx\leq \eta\bigl\|\nabla\me^{\frac{p+\gamma-1}{2}}\bigr\|_{L^{2}(\Omega)}^2+\eta\bigl\|\nabla|\nabla \ce|^q\bigr\|_{L^{2}(\Omega)}^2+C,
\end{equation}
for all $t\in \Tin$.
\end{lem}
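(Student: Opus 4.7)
The plan is to follow the Hölder--Gagliardo--Nirenberg--Young interpolation strategy of \cite[Lemma 3.4]{li2016}, adapted to account for the additional contribution of $\de$. By Lemma \ref{de_sign} we have $\de\leq 1$, hence $(\de+\me)^2\leq 2+2\me^2$, which reduces the task to estimating separately
\[
I_1:=\into|\nabla\ce|^{2q-2}\,\dx \qquad\text{and}\qquad I_2:=\into \me^2|\nabla\ce|^{2q-2}\,\dx.
\]

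For $I_1$, set $v_\ep:=|\nabla\ce|^q$ and apply a Gagliardo--Nirenberg inequality to write $I_1=\|v_\ep\|_{L^{(2q-2)/q}(\Omega)}^{(2q-2)/q}$ as a product of $\|\nabla v_\ep\|_{L^2(\Omega)}^{\theta(2q-2)/q}$ and $\|v_\ep\|_{L^{1/q}(\Omega)}^{(1-\theta)(2q-2)/q}=\|\nabla\ce\|_{L^1(\Omega)}^{(1-\theta)(2q-2)/q}$ for a suitable $\theta\in(0,1)$, plus lower-order terms. Uniform boundedness of $\|\nabla\ce(\cdot,t)\|_{L^1(\Omega)}$ follows from standard Neumann heat semigroup smoothing for \eqref{eq:regulare_c} combined with \eqref{es:l1me} and \eqref{stima_0_1}; Young's inequality then delivers a contribution of the desired form $\eta\|\nabla v_\ep\|_{L^2}^2+C$.

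For the principal term $I_2$, let $u_\ep:=\me^{(p+\gamma-1)/2}$ and, for $n\geq 2$, apply Hölder's inequality with conjugate exponents $(p+\gamma-1)/2$ and $(p+\gamma-1)/(p+\gamma-3)$ to obtain
\[
I_2\leq \|u_\ep\|_{L^2(\Omega)}^{4/(p+\gamma-1)}\cdot\|v_\ep\|_{L^{\rho/q}(\Omega)}^{(2q-2)/q},\qquad \rho:=\tfrac{(2q-2)(p+\gamma-1)}{p+\gamma-3}.
\]
A Gagliardo--Nirenberg interpolation applied to each factor---for $u_\ep$ between $\|\nabla u_\ep\|_{L^2}$ and $\|u_\ep\|_{L^{2/(p+\gamma-1)}(\Omega)}^{2/(p+\gamma-1)}=\|\me\|_{L^1(\Omega)}$ (which is bounded by \eqref{es:l1me}), and for $v_\ep$ between $\|\nabla v_\ep\|_{L^2}$ and $\|v_\ep\|_{L^{1/q}(\Omega)}^{1/q}=\|\nabla\ce\|_{L^1(\Omega)}$ (bounded as above)---produces a majorant of the form $C\|\nabla u_\ep\|_{L^2}^a\|\nabla v_\ep\|_{L^2}^b$ plus lower-order terms, for explicit exponents $a=a(p,q,\gamma,n)$ and $b=b(p,q,\gamma,n)$.

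The role of the assumptions \eqref{condpq_3} is precisely to guarantee that $a,b\in(0,2)$ and $a/2+b/2<1$, which is what a weighted Young's inequality requires in order to split the product into $\eta\|\nabla u_\ep\|_{L^2}^2+\eta\|\nabla v_\ep\|_{L^2}^2+C_\eta$, yielding \eqref{stima_5}. The one-dimensional case proceeds along the same lines, with \eqref{condpq_4} replacing \eqref{condpq_3} to reflect the stronger embedding $W^{1,2}(\Omega)\hookrightarrow L^\infty(\Omega)$ available when $n=1$; existence of admissible pairs $(p,q)$ under \eqref{cod_gamma} is guaranteed by Lemma \ref{exponents}. The main obstacle is the algebraic bookkeeping: the Hölder splitting, the two Gagliardo--Nirenberg interpolations, and the final two-parameter Young's inequality must have mutually compatible exponents, and \eqref{condpq_3}--\eqref{condpq_4} are exactly what encodes this compatibility.
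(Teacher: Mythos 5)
Your overall strategy (H\"older, then Gagliardo--Nirenberg against the low norms $\|\me\|_{L^1}$ and $\|\nabla\ce\|_{L^1}$, then a two-parameter Young inequality) is the same as the paper's, and the reduction $(\de+\me)^2\leq 2+2\me^2$ via Lemma \ref{de_sign} is a legitimate, slightly cleaner substitute for the paper's expansion into the three integrals $A_1,A_2,A_3$. The treatment of $I_1$ is fine (modulo the bookkeeping slip $\|v_\ep\|_{L^{1/q}}=\|\nabla\ce\|_{L^1}^{q}$, not $\|\nabla\ce\|_{L^1}$). The gap is in the main term $I_2$, at exactly the point you wave at: you assert that \eqref{condpq_3} ``is precisely'' the condition making your Young exponents satisfy $\tfrac a2+\tfrac b2<1$, but \eqref{condpq_3} is calibrated to a \emph{different} H\"older splitting, and your splitting does not cover the full admissible range. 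Concretely, with your conjugate pair $\bigl(\tfrac{p+\gamma-1}{2},\tfrac{p+\gamma-1}{p+\gamma-3}\bigr)$ one gets $I_2\lesssim\|\nabla u_\ep\|_{L^2}^{a}\|\nabla v_\ep\|_{L^2}^{b}+\dots$ with $a=\tfrac{4}{p+\gamma-1}\cdot\tfrac{\frac{p+\gamma-1}{2}-\frac12}{\frac{p+\gamma-1}{2}+\frac1n-\frac12}$ and $b=\tfrac{2q-2}{q}\cdot\tfrac{q-q/\rho}{q+\frac1n-\frac12}$, $\rho=\tfrac{(2q-2)(p+\gamma-1)}{p+\gamma-3}$. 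Take $n=3$, $\gamma=2$, $p=12$, $q=9$: then \eqref{cod_gamma} and \eqref{condpq_3} hold (the right-hand side of the second condition evaluates to $\approx 0.1023<\tfrac19$), yet $\tfrac a2+\tfrac b2\approx 0.1458+0.8578=1.0036>1$, so the weighted Young step fails. In addition, your H\"older exponents require $p+\gamma>3$, which \eqref{condpq_3} does not guarantee (e.g.\ $n=2$, $\gamma=1.5$, $p=1.4$, $q=1.5$ is admissible but makes $\tfrac{p+\gamma-1}{p+\gamma-3}$ negative).

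The paper avoids both problems by distributing the weight the other way: it applies H\"older with $\beta=\tfrac{nq}{(q-1)(n-2+\theta)}$ so that the \emph{entire} factor $|\nabla\ce|^{2q-2}$ lands at the nearly Sobolev-critical exponent, where $W^{1,2}\hookrightarrow L^{2n/(n-2+\theta)}$ together with Lemma \ref{lemm:poin} and Proposition \ref{Prop2} gives a bound by $\|\nabla|\nabla\ce|^q\|_{L^2}^{2(q-1)/q}$ --- exponent exactly $\tfrac{2(q-1)}{q}$, contributing $\tfrac{q-1}{q}$ to the Young sum --- and the Gagliardo--Nirenberg interpolation is applied only to the complementary factor $\bigl(\int\me^{2\alpha}\bigr)^{1/\alpha}$. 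The second inequality in \eqref{condpq_3} is then literally the statement $\tfrac{2\bar a_1}{p+\gamma-1}<\tfrac1q$ (with $\theta=0$), i.e.\ the Young compatibility for \emph{that} splitting. To repair your argument you would either have to re-derive the hypotheses to match your exponents (which would yield a strictly smaller admissible set of $(p,q)$, jeopardising the simultaneous satisfiability with Lemma \ref{lemm:primo_incubo} guaranteed by Lemma \ref{exponents}), or switch to the paper's choice of H\"older exponents.
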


\begin{proof}
The proof is postponed in Appendix \ref{Proofs}.
\end{proof}

\subsection{Boundedness for the regularised system} The bounds gained in Lemmas \ref{lemm:primo_incubo} and \ref{lemm:secondo_incubo} allow to produce the following estimate.
\begin{prop}\label{prop:crucial_estimate}
Let $\mu,\chi\geq 0$ and $\gamma$ under condition \eqref{cod_gamma}. Let $p,q\in (1,\infty)$ under the assumptions of Lemmas \ref{lemm:primo_incubo} and \ref{lemm:secondo_incubo}. Then there exists a constant $C:=C(p,q)>0$ independent from $\ep$ such that
\begin{equation}\label{stima_6}
    \into \me^p\dx +\into |\nabla \ce|^{2q}\dx \leq C,
\end{equation}
for all $t\in \Tin$.
\end{prop}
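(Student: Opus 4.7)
The plan is to sum up the two $\epsilon$-independent differential inequalities \eqref{stima_1} and \eqref{stima_2} as already done in \eqref{stima_3}, then invoke Lemmas \ref{lemm:primo_incubo} and \ref{lemm:secondo_incubo} in order to absorb the two bad chemotactic integrals on the right-hand side of \eqref{stima_3} into the dissipative gradient terms on the left. Set
\[
y(t):=\into\me^p\dx+\into|\nabla\ce|^{2q}\dx.
\]
By \eqref{cod_gamma} together with Lemma \ref{exponents}, one can pick $p,q>1$ that satisfy simultaneously \eqref{condpq_1}--\eqref{condpq_4}, so that both Lemmas \ref{lemm:primo_incubo} and \ref{lemm:secondo_incubo} are applicable.

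Apply \eqref{stima_4} to the first summand on the right-hand side of \eqref{stima_3} and \eqref{stima_5} to the second, choosing $\eta>0$ small enough that the resulting $\eta\into|\nabla\me^{(p+\gamma-1)/2}|^2\dx$ and $\eta\into|\nabla|\nabla\ce|^q|^2\dx$ contributions (with the respective prefactors $k_f^2\chi^2 p(p-1)/k_D$ and $q(2(q-1)+n/2)$) are strictly smaller than half of the dissipative coefficients $k_Dp(p-1)/(p+\gamma-1)^2$ and $q-1$ already appearing on the left. Since $s\mapsto s^p-s^{p+1}$ is uniformly bounded on $[0,\infty)$, the logistic contribution stemming from \eqref{ass_M} is absorbable into a new constant. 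Consequently \eqref{stima_3} reduces to
\[
y'(t)+c_1\into\bigl|\nabla\me^{(p+\gamma-1)/2}\bigr|^2\dx+c_2\into\bigl|\nabla|\nabla\ce|^q\bigr|^2\dx+2q\into|\nabla\ce|^{2q}\dx\leq C_2,
\]
for positive constants $c_1,c_2,C_2$ independent of $\epsilon$.

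The final step consists in closing this into a Grönwall-type inequality $y'+\lambda y\leq C_3$. The $\int|\nabla\ce|^{2q}$ part of $y$ is directly controlled by the last term on the left. For the $\int\me^p$ part, the crucial input is the uniform $L^1$-bound \eqref{es:l1me} from Lemma \ref{boundlem}: setting $u:=\me^{(p+\gamma-1)/2}$ and applying the Gagliardo--Nirenberg interpolation
\[
\|u\|_{L^{2p/(p+\gamma-1)}(\Omega)}\leq C\bigl(\|\nabla u\|_{L^2(\Omega)}^{\theta}\|u\|_{L^{2/(p+\gamma-1)}(\Omega)}^{1-\theta}+\|u\|_{L^{2/(p+\gamma-1)}(\Omega)}\bigr)
\]
with $\theta\in(0,1)$ the natural scaling exponent, and using $\|u\|_{L^{2/(p+\gamma-1)}(\Omega)}^{2/(p+\gamma-1)}=\|\me\|_{L^1(\Omega)}\leq K_0$, one arrives at $\into\me^p\dx\leq C(\|\nabla u\|_{L^2(\Omega)}^{s\theta}+1)$ with $s:=2p/(p+\gamma-1)$. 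A direct scaling computation gives $s\theta<2$ for all $p,\gamma>1$, hence Young's inequality yields $\into\me^p\dx\leq\delta\into|\nabla u|^2\dx+C(\delta)$ for any $\delta>0$. Choosing $\delta$ small enough so that $\delta\leq c_1$ produces the ODI $y'+\lambda y\leq C_3$ with $\lambda>0$, and a standard ODE comparison gives the $\epsilon$-uniform bound $y(t)\leq\max\{y(0),C_3/\lambda\}$ for every $t\in\Tin$, i.e.\ \eqref{stima_6}.

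The main hurdle in executing this plan is the bookkeeping of exponents: ensuring that all constants remain $\epsilon$-independent and that the joint choice of $(p,q)$ meets \eqref{condpq_1}--\eqref{condpq_4} at the same time while also guaranteeing an absorbable power $s\theta<2$ in the final Gagliardo--Nirenberg step. The restriction \eqref{cod_gamma}, via Lemma \ref{exponents}, is designed precisely to make this joint choice possible.
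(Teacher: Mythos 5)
Your proposal is correct and follows essentially the same route as the paper: sum \eqref{stima_1} and \eqref{stima_2} into \eqref{stima_3}, absorb the two chemotactic integrals via Lemmas \ref{lemm:primo_incubo} and \ref{lemm:secondo_incubo} with a small $\eta$, interpolate $\into\me^p\dx$ against $\bigl\|\nabla\me^{(p+\gamma-1)/2}\bigr\|_{L^2(\Omega)}^2$ using Gagliardo--Nirenberg and the $L^1$-bound $K_0$ with a subcritical exponent, and close with an ODE comparison. The only cosmetic difference is that you make explicit the boundedness of $s\mapsto s^p-s^{p+1}$ to dispose of the logistic term, which the paper handles implicitly when passing to \eqref{stima_3}.
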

\begin{proof}
Consider $p$ and $q$ large enough and satisfying the conditions in Lemmas \ref{lemm:primo_incubo} and \ref{lemm:secondo_incubo}. From \eqref{stima_3}, we can set the constant $\eta$ in \eqref{stima_4} and \eqref{stima_5} such that we can deduce the existence of certain constants $k_i$, $i=1,\ldots,4$
\begin{align*}
     \frac{\dd}{\dt}&\left(\into \me^p \dx+ \into |\nabla\ce|^{2q} \dx\right)  + k_1\into |\nabla \me ^{\frac{p+\gamma-1}{2}}|^2\dx+ k_2 \into |\nabla\ce|^{2q} \dx \leq k_3,
\end{align*}
for all $t\in \Tin$. Moreover,
\begin{align*}
    \into \me^p \dx = \bigl\| \me^{\frac{p+\gamma-1}{2}} \bigr\|_{L^{\frac{2p}{p+\gamma-1}}(\Omega)}^\frac{2p}{p+\gamma-1}&\leq k_4 \left(\bigl\| \nabla\me^{\frac{p+\gamma-1}{2}} \bigr\|_{L^2(\Omega)}^\frac{2pa}{p+\gamma-1}\bigl\| \me^{\frac{p+\gamma-1}{2}} \bigr\|_{L^{\frac{2}{p+\gamma-1}}(\Omega)}^\frac{2p(1-a)}{p+\gamma-1}+\bigl\| \me^{\frac{p+\gamma-1}{2}} \bigr\|_{L^{\frac{2}{p+\gamma-1}}(\Omega)}^\frac{2p}{p+\gamma-1}\right)\\
    & \leq k_5 \left(\bigl\| \nabla\me^{\frac{p+\gamma-1}{2}} \bigr\|_{L^2(\Omega)}^\frac{2pa}{p+\gamma-1}K_0^\frac{2p(1-a)}{p+\gamma-1}+K_0^\frac{2p}{p+\gamma-1}\right),
\end{align*}
where $K_0$ is the constant introduced in Lemma \ref{boundlem}. Observing that by construction
\[
\frac{pa}{p+\gamma-1}=\frac{p}{p+\gamma-1}\frac{(p+\gamma-1)(1-\frac{1}{p})}{p+\gamma-1+\frac{2}{n}-1}<1,
\]
we can apply Young's inequality in order to deduce
\[
 \into \me^p \dx \leq \eta  \into |\nabla \me ^{\frac{p+\gamma-1}{2}}|^2\dx +C(\eta),
\]
for any $\eta>0$. Upon a rearrangement in the constants we can combine the above estimates in order to conclude
\begin{align*}
     \frac{\dd}{\dt}&\left(\into \me^p \dx+ \into |\nabla\ce|^{2q} \dx\right) + k_6\left(\into \me^p \dx+ \into |\nabla\ce|^{2q}  \dx\right) \leq k_7,
\end{align*}
and an ODE comparison argument yields \eqref{stima_6}.
\end{proof}
\begin{cor}\label{cor:Lpestimates}
Let $\mu,\chi\geq 0$ and $\gamma$ under condition \eqref{cod_gamma}. Let $p,q\in (1,\infty)$ under the assumptions of Lemmas \ref{lemm:primo_incubo} and \ref{lemm:secondo_incubo}. Then there exists a constant $C:=C(p)>0$ independent from $\ep$ such that
\begin{equation}\label{stima_7}
    \|\me(\cdot,t)\|_{L^p(\Omega)}\leq C,\quad\mbox{and}\quad  \quad  \|\nabla\ce(\cdot,t)\|_{L^p(\Omega)}\leq C ,
\end{equation}
for all $t\in\Tin$.
\end{cor}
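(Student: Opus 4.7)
The plan is to derive the corollary as a direct consequence of Proposition \ref{prop:crucial_estimate}. The uniform-in-$\ep$ estimate $\into \me^p \dx \leq C$ supplied by the proposition immediately yields $\|\me(\cdot,t)\|_{L^p(\Omega)}\leq C^{1/p}$ by extracting the $p$-th root, so the first bound in \eqref{stima_7} is already contained in \eqref{stima_6}.

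The second bound requires a slight reinterpretation, since \eqref{stima_6} controls $\nabla\ce$ in $L^{2q}(\Omega)$ rather than $L^p(\Omega)$. Given the target exponent $p\in(1,\infty)$, I would first invoke Lemma \ref{exponents} under the standing assumption \eqref{cod_gamma} in order to select an admissible pair $(p,q)$ compatible with the constraints of Lemmas \ref{lemm:primo_incubo} and \ref{lemm:secondo_incubo} and additionally satisfying $2q \geq p$. Proposition \ref{prop:crucial_estimate} then delivers $\|\nabla\ce(\cdot,t)\|_{L^{2q}(\Omega)}\leq C^{1/(2q)}$, uniformly in $\ep$, and since $\Omega$ is bounded, a direct application of H\"older's inequality yields
\begin{equation*}
\|\nabla\ce(\cdot,t)\|_{L^p(\Omega)} \leq |\Omega|^{\frac{1}{p}-\frac{1}{2q}}\,\|\nabla\ce(\cdot,t)\|_{L^{2q}(\Omega)} \leq C',
\end{equation*}
uniformly in $\ep$ and $t\in\Tin$, which is exactly the second estimate in \eqref{stima_7}.

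There is no substantial obstacle in this argument: the delicate analytic work has already been carried out inside Proposition \ref{prop:crucial_estimate}, and the corollary is essentially a rephrasing of \eqref{stima_6} in terms of Lebesgue norms. The only nuance worth verifying is that the admissible range of $q$ dictated by the conditions of Lemmas \ref{lemm:primo_incubo} and \ref{lemm:secondo_incubo} permits $q$ to be chosen arbitrarily large with respect to the prescribed $p$, a fact which follows from the structural form of the inequalities \eqref{condpq_1}, \eqref{condpq_2}, \eqref{condpq_3}, \eqref{condpq_4} combined with the dimensional restriction \eqref{cod_gamma} on $\gamma$.
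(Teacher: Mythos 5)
Your derivation is correct and coincides with the argument the paper leaves implicit: the corollary is stated without proof as an immediate consequence of Proposition \ref{prop:crucial_estimate}, and the only genuine point — converting the $L^{2q}$ control of $\nabla\ce$ into an $L^p$ bound via the unboundedness of the admissible exponent sequences from Lemma \ref{exponents} and H\"older's inequality on the bounded domain — is exactly the one you address.
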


In the next Proposition we provide the uniform bounds on regularised solutions $\me$ and $\ce$ that, together with Lemma \ref{de_sign} imply the uniform boundedness of the triple $(\me,\ce,\de)$.
\begin{prop}\label{prop:Linfty}
Let $\mu, \chi>0$ and $\gamma$ under condition \eqref{cod_gamma}. There exists a constant $C>0$ independent from $\ep$ such that
\begin{equation}\label{stima_8}
    \|\me(\cdot,t)\|_{L^\infty(\Omega)}\leq C,\quad  \|\ce(\cdot,t)\|_{W^{1,\infty}(\Omega)}\leq C 
\end{equation}
for all $t\in\Tin$.
\end{prop}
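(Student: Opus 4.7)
\medskip

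\noindent\textbf{Proof proposal.} The plan is to bootstrap from the $\epsilon$-uniform $L^p$ estimates of Corollary \ref{cor:Lpestimates} in two stages: first upgrade $c_\epsilon$ to $W^{1,\infty}(\Omega)$ through Neumann heat semigroup smoothing, then perform a Moser/Alikakos iteration on the $m_\epsilon$ equation in which the chemotactic drift, now with bounded $\nabla c_\epsilon$, is absorbed into the superlinear damping coming from the logistic term $M$.

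For the first stage, recall that the solution of \eqref{eq:regulare_c} admits the Duhamel representation
\[
c_\epsilon(\cdot,t)=e^{t(\Delta-1)}c_{\epsilon 0}+\int_{0}^{t}e^{(t-s)(\Delta-1)}\bigl(m_\epsilon(\cdot,s)+d_\epsilon(\cdot,s)\bigr)\,\ds,
\]
where $(e^{t\Delta})_{t\geq 0}$ denotes the Neumann heat semigroup on $\Omega$. The standard $L^{p}$–$L^{\infty}$ smoothing estimates for $e^{t\Delta}$ and $\nabla e^{t\Delta}$, carrying prefactors of the form $1+t^{-n/(2p)}$ and $1+t^{-1/2-n/(2p)}$ respectively, combined with the exponential factor $e^{-(t-s)}$ from the zeroth order term, become uniformly time-integrable as soon as $p>n$. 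Since Corollary \ref{cor:Lpestimates} gives $\|m_\epsilon(\cdot,t)\|_{L^{p}(\Omega)}\leq C$ for every finite $p$ and Lemma \ref{de_sign} provides $\|d_\epsilon(\cdot,t)\|_{L^{\infty}(\Omega)}\leq 1$, choosing such a $p$ produces an $\epsilon$-uniform constant $L>0$ with
\[
\|c_\epsilon(\cdot,t)\|_{L^{\infty}(\Omega)}+\|\nabla c_\epsilon(\cdot,t)\|_{L^{\infty}(\Omega)}\leq L\qquad\text{for all }t\in(0,T_{max}).
\]

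For the second stage, I would revisit the computation of Lemma \ref{mean1} using the new pointwise bound $|\nabla c_\epsilon|\leq L$. Testing \eqref{eq:regulare_m} against $m_\epsilon^{p-1}$ and integrating by parts, the diffusive term still controls $\frac{k_D(p-1)}{(p+\gamma-1)^2}\int_\Omega|\nabla m_\epsilon^{(p+\gamma-1)/2}|^{2}\,\dx$, the chemotactic contribution is bounded via \eqref{ass_f} by $\chi^{2}(p-1)k_{f}^{2}k_{D}^{-1}L^{2}\int_\Omega m_\epsilon^{p-\gamma+1}\,\dx$, and assumption \eqref{ass_M} yields the favourable absorption $-\mu\int_\Omega m_\epsilon^{p+1}\,\dx$. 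Since $\gamma>1$ implies $p-\gamma+1<p+1$, Young's inequality allows
\[
\chi^{2}(p-1)\tfrac{k_{f}^{2}}{k_{D}}L^{2}\,m_\epsilon^{p-\gamma+1}\leq \tfrac{\mu}{2}\,m_\epsilon^{p+1}+C(p),
\]
with $C(p)$ depending on $p$ but independent of $\epsilon$. The resulting differential inequality has the form
\[
\tfrac{1}{p}\tfrac{\dd}{\dt}\int_{\Omega}m_\epsilon^{p}\,\dx+\tfrac{\mu}{2}\int_\Omega m_\epsilon^{p+1}\,\dx\leq C(p),
\]
and combining it with the interpolation $\int_\Omega m_\epsilon^{p+1}\,\dx\geq|\Omega|^{-1/p}\bigl(\int_\Omega m_\epsilon^{p}\,\dx\bigr)^{(p+1)/p}$ gives, via an ODE comparison, an $\epsilon$-uniform bound $\|m_\epsilon(\cdot,t)\|_{L^{p}(\Omega)}\leq C(p)$.

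To pass from $L^{p}$ for every finite $p$ to $L^{\infty}$ I would run an Alikakos-type iteration along $p_{k}=2^{k}$, tracking how the constant $C(p_{k})$ scales at each step; the superlinear absorption $-\mu\int_\Omega m_\epsilon^{p+1}\,\dx$ provides precisely the geometric compensation needed to keep $C(p_{k})^{1/p_{k}}$ bounded as $k\to\infty$, yielding the $\epsilon$-uniform estimate $\|m_\epsilon(\cdot,t)\|_{L^{\infty}(\Omega)}\leq C$. Together with the $W^{1,\infty}$ bound on $c_\epsilon$ from the first stage, this proves \eqref{stima_8}. The main obstacle will be controlling the $p$-dependence of the constants: the factor $\chi^{2}(p-1)L^{2}$ entering the Young absorption inflates $C(p)$ in a way that must be shown to remain subgeometric in $p$ relative to the dissipation rate $\mu/2$, which is exactly where the assumption $\gamma>1$ (rather than merely $\gamma\geq 1$) and the strict superlinearity of the logistic damping become essential.
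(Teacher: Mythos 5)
Your first stage (the $W^{1,\infty}$ bound on $\ce$ via Duhamel's formula and the $L^{p}$--$L^{\infty}$ smoothing of the Neumann heat semigroup with $p>n$) is exactly the paper's argument and is fine. The gap is in the second stage. In the Young step
\[
\chi^{2}(p-1)\tfrac{k_{f}^{2}}{k_{D}}L^{2}\,\me^{\,p-\gamma+1}\leq\tfrac{\mu}{2}\,\me^{\,p+1}+C(p),
\]
the conjugate exponents are $\tfrac{p+1}{p-\gamma+1}$ and $\tfrac{p+1}{\gamma}$, so the constant comes out of size $C(p)\sim\bigl(\chi^{2}pk_{f}^{2}L^{2}/k_{D}\bigr)^{(p+1)/\gamma}(\mu/2)^{-(p-\gamma+1)/\gamma}$, i.e.\ $C(p)^{1/(p+1)}\sim p^{1/\gamma}$. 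The ODE comparison then yields only $\|\me(\cdot,t)\|_{L^{p}(\Omega)}\lesssim p^{1/\gamma}$, which diverges as $p\to\infty$; along $p_{k}=2^{k}$ one finds $C(p_{k})^{1/p_{k}}\sim 2^{k/\gamma}\to\infty$, so the superlinear logistic absorption does \emph{not} supply the geometric compensation you claim. The root of the problem is that the gap between the exponents $p-\gamma+1$ and $p+1$ is the fixed number $\gamma$, while the coefficient of the chemotactic term grows like $p$ relative to the damping, forcing a Young constant that is super-exponential in $p$ in a way that taking $p$-th roots does not kill. This is not a bookkeeping issue that can be postponed: the scheme as described cannot close.

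The paper avoids this by never discarding the dissipation term $\tfrac{k_{D}p(p-1)}{(p+\gamma-1)^{2}}\bigl\|\nabla\me^{\frac{p+\gamma-1}{2}}\bigr\|_{L^{2}(\Omega)}^{2}$, whose coefficient is bounded below uniformly in $p$. The chemotactic term is raised pointwise ($s^{p-\gamma+1}\leq s^{p+\gamma-1}+1$) to $\into\me^{p+\gamma-1}\dx=\bigl\|\me^{\frac{p+\gamma-1}{2}}\bigr\|_{L^{2}(\Omega)}^{2}$, which by Gagliardo--Nirenberg (Lemma \ref{Gag_Nir}) splits into a piece absorbed by the dissipation and a piece controlled by $\bigl\|\me^{\frac{p+\gamma-1}{2}}\bigr\|_{L^{1}(\Omega)}^{2}$; the choice $p_{k}=2p_{k-1}-(\gamma-1)$ makes this last quantity equal to $M_{k-1}^{2}$ and produces the genuine Moser recursion $M_{k}\leq\mathrm{poly}(p_{k})\,M_{k-1}^{2}$ of \eqref{est:stima_1_elab_2}. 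It is this quadratic recursion in $M_{k-1}$, not the logistic sink, that tames the polynomially growing constants (since $\mathrm{poly}(p_{k})^{1/p_{k}}\to1$); the logistic term only serves, via \eqref{est:Lp}, to absorb the lower-order term $\mu p\|\me\|_{L^{p}(\Omega)}^{p}$. You should replace your second stage by this Gagliardo--Nirenberg/Moser doubling argument.
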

\begin{proof}
Fix $T\in\Tin$. The bound on $\ce$ can be derived in the same spirit of what we did in the proof of Lemma \ref{lem:nabla_c} by using the Duhamel's formula  and $L^p-L^q-$estimates for the Neumann-heat semigroup in the spirit of in \cite[Lemma 1.3]{Winkler2010JDE}, see also \cite[Lemma 2.6]{li2016}. More precisely, for all $t\in (0,T)$ we define
\[
\bar{\me}(t)=\frac{1}{|\Omega|}\into \me(x,t) \dx,\quad\mbox{ and }\quad\bar{\de}(t)=\frac{1}{|\Omega|}\into \de(x,t) \dx,
\]
and, fixing $p>n$,  we can estimate
\begin{align*}
    \|\ce(.,t)\|_{L^\infty(\Omega)}\leq& \|e^{t(\Delta-1)}c_0\|_{L^\infty(\Omega)}+\int_0^t\bigl\|e^{(t-\tau)(\Delta-1)}(\me(\cdot,\tau)+\de(\cdot,\tau))\bigr\|_{L^\infty(\Omega)}d\tau\\
    \leq & \|c_0\|_{L^\infty(\Omega)}+\int_0^t\left[C_1\left((1+(t-\tau)^{-\frac{n}{2p}}\right)e^{-\lambda_1(t-\tau)}\|\me(\cdot,\tau)-\bar{\me}(\tau)\|_{L^p(\Omega)}\right]d\tau\\
    & +\int_0^t\left[C_1\left((1+(t-\tau)^{-\frac{n}{2p}}\right)e^{-\lambda_1(t-\tau)}\|\de(\cdot,\tau)-\bar{\de}(\tau)\|_{L^p(\Omega)}\right]d\tau\\
    & +\int_0^t e^{-(t-\tau)}\left(\|\bar{\me}(\tau)\|_{L^\infty(\Omega)}+\|\bar{\de}(\tau)\|_{L^\infty(\Omega)}\right)d\tau\\
    \leq & \|c_0\|_{L^\infty(\Omega)}+\int_0^t\left[C_1\left((1+(t-\tau)^{-\frac{n}{2p}}\right)e^{-\lambda_1(t-\tau)}4C(p)\right]d\tau\\
    & +\int_0^t e^{-(t-\tau)}|\Omega|^{-1}\left(\|\bar{\me}(\tau)\|_{L^1(\Omega)}+\|\bar{\de}(\tau)\|_{L^1(\Omega)}\right)d\tau\leq C_2,
\end{align*}
since $p>n$, the $L^1-$norms of the average functions are bounded because of Lemmas \ref{boundlem} and \ref{de1}, and $C(p)$ is the constant given by \eqref{stima_7}. Similarly, we can bound
\begin{align*}
    \|\nabla\ce(.,t)\|_{L^\infty(\Omega)}\leq& C_3\|\nabla c_0\|_{L^\infty(\Omega)}\\&+C_4\int_0^t\left[\left((1+(t-\tau)^{-\frac{1}{2}-\frac{n}{2p}}\right)e^{-\lambda_1(t-\tau)}\left(\|\me(\cdot,\tau)\|_{L^p(\Omega)}+\|\de(\cdot,\tau))\|_{L^p(\Omega)}\right)\right]d\tau\\
    & C_3\|\nabla c_0\|_{L^\infty(\Omega)}+C_4\int_0^t\left[\left((1+(t-\tau)^{-\frac{1}{2}-\frac{n}{2p}}\right)e^{-\lambda_1(t-\tau)}2C(p)\right]d\tau\leq C_5.
\end{align*}
In order to derive the estimates for $\me$ we adopt a by now classical iteration procedure on the exponent $p$, see \cite{Ali792,Ali791,li2016,tao2012}. Fix $p_0\in\R$ such that $p_0>\frac{3}{2}(\gamma-1)$ and such that
\[\frac{p(p-1)}{(p+\gamma-1)^2}\in \left(\frac{1}{2},\frac{3}{2}\right), \quad \mbox{ for all}\quad p\in \left[p_0,\infty\right).\]
We first perform the following estimates on $\|\me\|_{L^p(\Omega)}^p$ and $\bigl\|\me^{p+\gamma-1}\bigr\|_{L^1(\Omega)}$. By Young's inequality we have
\begin{equation}\label{est:Lp}
          p(\mu+1)\|\me\|_{L^p(\Omega)}^p \leq \mu p \|\me\|_{L^{p+1}(\Omega)}^{p+1}+\frac{(\mu+1)^{p+1}}{\mu^p}\left(\frac{p}{p+1}\right)^{p+1}|\Omega|,
\end{equation}
while, in combination with Gagliardo-Niremberg inequality, we get
\begin{equation}\label{est:pgammauno}
\begin{aligned}
        \into \me^{p+\gamma-1}\dx = \bigl\|\me^{\frac{p+\gamma-1}{2}}\bigr\|_{L^2(\Omega)}^{2}\leq& C_6 \bigl\|\nabla\me^{\frac{p+\gamma-1}{2}}\bigr\|_{L^2(\Omega)}^{\frac{2n}{n+2}}\bigl\|\me^{\frac{p+\gamma-1}{2}}\bigr\|_{L^1(\Omega)}^{\frac{4}{n+2}}+C_7\bigl\|\me^{\frac{p+\gamma-1}{2}}\bigr\|_{L^1(\Omega)}^{2}\\
        \leq& \frac{nK_1}{n+2} \bigl\|\nabla\me^{\frac{p+\gamma-1}{2}}\bigr\|_{L^2(\Omega)}^{2}+\left(C_6^\frac{n+2}{2}\frac{2K_1^{-\frac{n}{2}}}{n+2}+C_7\right)\bigl\|\me^{\frac{p+\gamma-1}{2}}\bigr\|_{L^1(\Omega)}^{2},
\end{aligned}
\end{equation}
where $K_1$ is a positive constant that will be fixed later. Remember that \eqref{stima_1} gives
\begin{align*}
     \frac{\dd}{\dt}\|\me\|_{L^p(\Omega)}^p  &  + \frac{k_Dp(p-1)}{(p+\gamma-1)^2}\bigl\|\nabla \me ^{\frac{p+\gamma-1}{2}}\bigr\|_{L^2(\Omega)}^2\\
     &\leq \frac{\chi^2 p(p-1)}{k_D}\into \me^{p-\gamma+1}|\nabla \ce|^2 \dx + \mu p\|\me\|_{L^p(\Omega)}^p - \mu p\|\me\|_{L^{p+1}(\Omega)}^{p+1}\\
     &\leq \frac{\chi^2 p(p-1)C_5^2}{k_D}\into \me^{p-\gamma+1} \dx + \mu p\|\me\|_{L^p(\Omega)}^p - \mu p\|\me\|_{L^{p+1}(\Omega)}^{p+1}.
\end{align*}
Using Young's inequality on the term involving $\me^{p-\gamma+1}$ and \eqref{est:Lp} we have
\begin{align*}
     \frac{\dd}{\dt}\|\me\|_{L^p(\Omega)}^p  &+\|\me\|_{L^p(\Omega)}^p   + \frac{k_D}{2}\bigl\|\nabla \me ^{\frac{p+\gamma-1}{2}}\bigr\|_{L^2(\Omega)}^2\\
     &\leq \frac{\chi^2 p^2 C_5^2}{k_D}\into \me^{p+\gamma-1} \dx +\left(\frac{\chi^2 p^2 C_5^2}{k_D}+\frac{(\mu+1)^{p+1}}{\mu^p}\left(\frac{p}{p+1}\right)^{p+1}\right)|\Omega|.
\end{align*}
Applying \eqref{est:pgammauno} we obtain
\begin{align*}
     \frac{\dd}{\dt}\|\me\|_{L^p(\Omega)}^p  &+\|\me\|_{L^p(\Omega)}^p   + \frac{k_D}{2}\bigl\|\nabla \me ^{\frac{p+\gamma-1}{2}}\bigr\|_{L^2(\Omega)}^2\\
     &\leq \frac{\chi^2 p^2 C_5^2}{k_D}\left[ \frac{nK_1}{n+2} \bigl\|\nabla\me^{\frac{p+\gamma-1}{2}}\bigr\|_{L^2(\Omega)}^{2}+\left(C_6^\frac{n+2}{2}\frac{2K_1^{-\frac{n}{2}}}{n+2}+C_7\right)\bigl\|\me^{\frac{p+\gamma-1}{2}}\bigr\|_{L^1(\Omega)}^{2}\right] \\
     &+\left(\frac{\chi^2 p^2 C_5^2}{k_D}+\frac{(\mu+1)^{p+1}}{\mu^p}\left(\frac{p}{p+1}\right)^{p+1}\right)|\Omega|.
\end{align*}
Setting
\begin{align*}
   & K_1 = \frac{k_D^2(n+2)}{2n\chi^2 p^2 C_5^2},
\end{align*}
we deduce the following estimate
\begin{equation}\label{est:stima_1_elab}
    \begin{aligned}
               \frac{\dd}{\dt}\|\me\|_{L^p(\Omega)}^p  +\|\me\|_{L^p(\Omega)}^p \leq & \frac{\chi^2 p^2 C_5^2}{k_D}\left[\left(\frac{2n\chi^2 p^2 C_5^2}{k_D^2(n+2)}\right)^{\frac{n}{2}}\frac{2C_6^\frac{n+2}{2}}{n+2}+C_7\right]\bigl\|\me^{\frac{p+\gamma-1}{2}}\bigr\|_{L^1(\Omega)}^{2} \\
     &+\left(\frac{\chi^2 p^2 C_5^2}{k_D}+\frac{(\mu+1)^{p+1}}{\mu^p}\left(\frac{p}{p+1}\right)^{p+1}\right)|\Omega|.  
    \end{aligned}
\end{equation}

Given $p_0\in\R$ as above, we then recursively define $p_k=2p_{k-1}-(\gamma-1)$, for $k\in \mathbb{N}$. Is it easy to check that
\[
\left(\frac{1}{2}+\frac{1}{2^k}\right)(\gamma-1)\leq \frac{p_k}{2^k}\leq p_0,
\]
for all $k\in \mathbb{N}$, that ensure an uniform in $k$ boundedness from above and below for the ratio $\frac{p_k}{2^k}$. Introducing
\[
 M_k \equiv \sup_{t\in(0,T)} \into \me ^{p_k}\dx,
\]
and integrating in time \eqref{est:stima_1_elab} with $p=p_k$ we obtain
\begin{equation}\label{est:stima_1_elab_2}
    \begin{aligned}
               M_k \leq & \|m_0\|_{L^{p_k}(\Omega)}^{p_k}+\frac{\chi^2 p_k^2 C_5^2}{k_D}\left[\left(\frac{2n\chi^2 p_k^2 C_5^2}{k_D^2(n+2)}\right)^{\frac{n}{2}}\frac{2C_6^\frac{n+2}{2}}{n+2}+C_7\right]M_{k-1}^2 \\
     &+\left(\frac{\chi^2 p_k^2 C_5^2}{k_D}+\frac{(\mu+1)^{p_k+1}}{\mu^p_k}\left(\frac{p_k}{p_k+1}\right)^{p_k+1}\right)|\Omega|\\
     \leq & p_k^{n+2}\frac{\chi^2 C_5^2}{k_D}\left(\frac{2n\chi^2  C_5^2}{k_D^2(n+2)}\right)^{\frac{n}{2}}\frac{2C_6^\frac{n+2}{2}}{n+2}M_{k-1}^2.  
    \end{aligned}
\end{equation}
For $k$ sufficiently large,  we can deduce the existence of a constant $C_8$ that depends on the upper bound for $\frac{p_k}{2^k}$ such that
\[
M_k \leq C_8^k M_{k-1}^2,
\]
that inductively leads to
\[
M_k \leq C_8^{k+\sum_{j=1}^{k-1}2^j(k-j)} M_{0}^{2^k}\leq C_8^{2}C_8^{2^{k}} M_{0}^{2^k}.
\]
Thanks to the lower bound of  $\frac{p_k}{2^k}$, by sending $k\to \infty$ we deduce the existence of $C_9>0$ such that
\[
\sup_{t\in(0,T)}\|\me(\cdot,t)\|_{L^\infty(\Omega)}\leq \limsup_{k\to\infty} M_k^{\frac{1}{p_k}}\leq \limsup_{k\to\infty}C_8^{2}C_8^{\frac{2^{k}}{p_k}} M_{0}^{\frac{2^k}{p_k}}\leq C_9.
\]

\end{proof}

The combination of all previous estimates provides the following global well-posedness result.

\begin{cor}[Global existence and boundedness of classical solutions to \eqref{eq:regulare}]\label{cor:classical}
Let $\mu, \chi>0$ and $\gamma$ under condition \eqref{cod_gamma}. Suppose that the initial condition $(\me_0,\ce_0, \de_0)$ satisfies \eqref{indataregul}. Then there exist a constant $C>0$ such that system \eqref{eq:regulare} has a classical solution $$(\me,\ce,\de)\in \left(C^0(\left[0,\infty\right)\times\bar{\Omega} )\cap C^{2,1}(\left(0,\infty\right)\times\bar{\Omega}) \right)^3$$ which exists globally in time and satisfies
\begin{equation*}
    \|\me(\cdot,t)\|_{L^\infty(\Omega)}+\|\ce(\cdot,t)\|_{W^{1,\infty}(\Omega)}+\|\de(\cdot,t)\|_{L^{\infty}(\Omega)}\leq C
\end{equation*}
for all $t\in \left(0,\infty\right)$.
\end{cor}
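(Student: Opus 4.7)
The plan is to combine the local-in-time classical existence theory of Proposition~\ref{locexistregulare} with the a-priori $L^\infty$ estimates derived in Lemma~\ref{de_sign} and Proposition~\ref{prop:Linfty} to upgrade the maximal existence time to $\Te=+\infty$ via the extensibility criterion \eqref{localtoinf}. The corollary is essentially a packaging step: all the nontrivial analytical work has already been accomplished in the preceding lemmas.

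I would proceed in three steps. First, I invoke Proposition~\ref{locexistregulare}, which (since the regularised diffusion $D_\ep$ is non-degenerate with $D_\ep(0)>0$ and the regularised initial data \eqref{indataregul} are $C^{2+\vartheta}$ and compatible with the Neumann conditions) produces a maximal existence time $\Te\in(0,\infty]$ together with a classical solution $(\me,\ce,\de)$ on $\Omega\times(0,\Te)$ enjoying the claimed $C^0\cap C^{2,1}$ regularity, and the blow-up alternative: either $\Te=+\infty$, or the sum $\|\me(\cdot,t)\|_{\Li}+\|\ce(\cdot,t)\|_{\Wi}+\|\de(\cdot,t)\|_{\Li}$ diverges as $t\nearrow\Te$. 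Second, I collect the uniform bounds already available on $\Tin$: Lemma~\ref{de_sign} gives $0\leq\de\leq 1$ pointwise on $\Omega\times(0,\Te)$, while Proposition~\ref{prop:Linfty} furnishes a constant $C>0$, depending only on the data (and in particular independent of $t\in\Tin$, as well as of $\ep$), such that $\|\me(\cdot,t)\|_{\Li}+\|\ce(\cdot,t)\|_{\Wi}\leq C$ for all $t\in\Tin$. Summing these estimates yields a finite bound on the quantity appearing in \eqref{localtoinf}, which excludes the blow-up alternative and forces $\Te=+\infty$.

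Finally, since the estimates of Lemma~\ref{de_sign} and Proposition~\ref{prop:Linfty} are uniform in $t\in\Tin$ with $\Te=+\infty$, the same bound
\[
\|\me(\cdot,t)\|_{\Li}+\|\ce(\cdot,t)\|_{\Wi}+\|\de(\cdot,t)\|_{\Li}\leq C
\]
holds for all $t\in(0,\infty)$, which is precisely the statement of the corollary. No genuine obstacle is expected here; the only minor subtlety is to observe that the constants delivered by Proposition~\ref{prop:Linfty} do not degenerate as $t\nearrow\Te$, which is automatic because the estimates were obtained from the differential inequalities in Lemmas~\ref{mean1}–\ref{lemm:secondo_incubo} and Proposition~\ref{prop:crucial_estimate} via ODE comparison with $\ep$-independent right-hand sides, yielding bounds that depend only on the initial data, on $p,q,\gamma,\chi,\mu$ and on $\Omega$.
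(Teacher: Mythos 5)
Your proposal is correct and matches the paper's (implicit) argument: the paper states the corollary as an immediate consequence of ``the combination of all previous estimates,'' and the intended reasoning is precisely the one you spell out, namely feeding the uniform bounds of Lemma~\ref{de_sign} and Proposition~\ref{prop:Linfty} into the extensibility alternative \eqref{localtoinf} of Proposition~\ref{locexistregulare} to force $\Te=+\infty$. Your closing observation that the constants are uniform on $\Tin$ (hence do not degenerate as $t\nearrow\Te$) is the only point requiring care, and you handle it correctly.
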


A direct integration on \eqref{stima_1}, together with the bounds in \eqref{stima_8}, gives the following bounds.
\begin{cor}\label{cor:nonlinear_estimates}
Let $\mu,\chi\geq 0$ and $\gamma$ under condition \eqref{cod_gamma}. Let $p\in (1,\infty)$. Then there exists a constant $C>0$ independent from $\ep$ such that
\begin{equation}\label{stima_9}
    \int_{0}^t\into D_\ep(\me)\me^{p-2}|\nabla\me|^2\dx \leq C(1+t),
\end{equation}
and
\begin{equation}\label{stima_10}
    \int_{0}^t\into |\nabla\me^{\frac{p+\gamma-1}{2}}|^2\dx \leq C(1+t),
\end{equation}
for all $t\in (0,\infty)$.
\end{cor}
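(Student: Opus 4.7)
The strategy is a direct time integration of the differential inequality \eqref{stima_1} from Lemma \ref{mean1}, exploiting the uniform $L^\infty$ bounds delivered by Proposition \ref{prop:Linfty} to absorb the right-hand side into a constant independent of $\ep$ and $t$. The two quantities we want to bound already appear on the left-hand side of \eqref{stima_1} with fixed positive prefactors, alongside the time derivative of $\tfrac{1}{p}\|\me\|_{L^p(\Omega)}^p$, so essentially no new ingredients are needed.

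The first step is to control the right-hand side of \eqref{stima_1}. By \eqref{stima_8} there exists $C_*>0$, independent of $\ep$, such that $\|\me(\cdot,t)\|_{L^\infty(\Omega)}\leq C_*$ and $\|\nabla\ce(\cdot,t)\|_{L^\infty(\Omega)}\leq C_*$ for all $t\in\Tin$. Consequently
\[
\into \me^{p-\gamma+1}|\nabla\ce|^2\dx \leq C_*^{p-\gamma+1}\,C_*^{2}\,|\Omega|,\qquad \into \me^p\dx \leq C_*^p|\Omega|,
\]
(for $p\geq \gamma-1$, see the remark at the end), while the term $-\mu\into \me^{p+1}\dx$ is non-positive and can simply be discarded. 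Inserting these bounds into \eqref{stima_1} produces
\[
\frac{1}{p}\frac{\dd}{\dt}\into \me^p\dx + \frac{p-1}{2}\into D_\ep(\me)\me^{p-2}|\nabla\me|^2\dx + \frac{k_D(p-1)}{(p+\gamma-1)^2}\into \bigl|\nabla\me^{\frac{p+\gamma-1}{2}}\bigr|^2\dx \leq K,
\]
for a constant $K=K(p,\chi,\mu,C_*,\Omega)>0$ independent of $\ep$ and $t$.

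Integrating on $[0,t]$ and using that $\tfrac{1}{p}\|\me(\cdot,t)\|_{L^p(\Omega)}^p\geq 0$, I would then obtain
\[
\frac{p-1}{2}\int_0^t\!\into D_\ep(\me)\me^{p-2}|\nabla\me|^2\dx\ds + \frac{k_D(p-1)}{(p+\gamma-1)^2}\int_0^t\!\into \bigl|\nabla\me^{\frac{p+\gamma-1}{2}}\bigr|^2\dx\ds \leq \frac{1}{p}\|\me_0\|_{L^p(\Omega)}^p + Kt.
\]
Since $m_0\in L^\infty(\Omega)$ by \eqref{indata}, the data term $\tfrac{1}{p}\|\me_0\|_{L^p(\Omega)}^p$ is finite and independent of $\ep$, so dividing by the fixed positive coefficients on the left yields both \eqref{stima_9} and \eqref{stima_10}.

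The only mildly delicate point—and thus the main (if minor) obstacle—is that the chemotaxis contribution $\into \me^{p-\gamma+1}|\nabla\ce|^2\dx$ is only directly controlled by the $L^\infty$ bound on $\me$ when $p\geq \gamma-1$. For $p<\gamma-1$ one may instead revisit the Young inequality step \eqref{int_chemo} in the proof of Lemma \ref{mean1}, splitting $\me^{p-1}|\nabla\me||\nabla\ce|$ with a different choice of weights that uses the already-available $L^\infty$ bound on $\nabla\ce$, so that only non-negative powers of $\me$ appear on the right-hand side. In either case the argument is a pure bookkeeping exercise, since the genuine analytic work has been done in Proposition \ref{prop:Linfty}.
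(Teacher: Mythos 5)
Your proposal is correct and follows exactly the paper's route: the paper proves this corollary by ``a direct integration on \eqref{stima_1}, together with the bounds in \eqref{stima_8}'', which is precisely your time integration of Lemma \ref{mean1} combined with the $L^\infty$ bounds of Proposition \ref{prop:Linfty}. Your additional remark on the case $p<\gamma-1$ (where $\me^{p-\gamma+1}$ is a negative power) is a sensible precaution that the paper does not spell out, though in its applications the paper only ever uses exponents with $p>\gamma-1$.
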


\section{Convergence to global weak solutions} \label{sec:nonregularised}

In the present section we finally produce existence of global weak solution to system \eqref{eq:main_PM}.  The nonlinearity in the equation for $\me$ requires a stronger notion of convergence with respect to the one of $\ce$ and $\de$. For this purpose, we produce the following dual estimates for the time derivative of $\me$.

\begin{lem}\label{lem:timeder}
Fix $\chi,\mu>0$ and assume that $\gamma$ is under condition \eqref{cod_gamma}. Let $\theta>\max\left\{1,\frac{\gamma}{2}\right\}$. Then for $r>1$ and for any $T>0$ there exists a constant $C>0$ such that
\begin{equation}\label{stima_11}
    \bigl\|\partial_t \me^\theta\bigr\|_{L^1\left([0,T);(W_0^{1,r}(\Omega))^\ast\right)}\leq C,
\end{equation}
for any $\ep>0$.
\end{lem}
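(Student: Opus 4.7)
The plan is to proceed by duality. For any test function $\phi \in W_0^{1,r}(\Omega)$ with $\|\phi\|_{W_0^{1,r}(\Omega)} \leq 1$, I would express the pairing $\langle \partial_t \me^\theta, \phi\rangle$ via the chain rule $\partial_t \me^\theta = \theta \me^{\theta-1} \partial_t \me$ and substitute \eqref{eq:regulare_m}. One integration by parts (the Neumann condition on $\me$ and $\ce$ kills the boundary term regardless of the trace of $\phi$) together with the product rule $\nabla(\me^{\theta-1}\phi) = (\theta-1)\me^{\theta-2}\phi\,\nabla\me + \me^{\theta-1}\nabla\phi$ splits the pairing into five contributions: a dissipation term $-\theta(\theta-1)\into D_\ep(\me)\me^{\theta-2}|\nabla\me|^2\phi\,\dx$, a cross-term $\chi\theta(\theta-1)\into f(\me)\me^{\theta-2}\nabla\me\cdot\nabla\ce\,\phi\,\dx$, two drift-type terms carrying $\nabla\phi$, namely $-\theta\into D_\ep(\me)\me^{\theta-1}\nabla\me\cdot\nabla\phi\,\dx$ and $\chi\theta\into f(\me)\me^{\theta-1}\nabla\ce\cdot\nabla\phi\,\dx$, and the reaction $\theta\into \me^{\theta-1}M(\me)\phi\,\dx$.

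I would then gather the uniform-in-$\ep$ tools already at hand: Corollary \ref{cor:classical} supplies $\|\me\|_{L^\infty}, \|\nabla\ce\|_{L^\infty} \leq C$; continuity of $D$ yields $D_\ep(\me) \leq D_{\max}$ uniformly; assumptions \eqref{ass_f} and \eqref{ass_M} give $|f(\me)|\leq k_f\me$ and uniform boundedness of $M(\me)$; and, for $r$ large enough (specifically $r>n$), the Sobolev embedding gives $\|\phi\|_{L^\infty(\Omega)} \leq C\|\phi\|_{W^{1,r}(\Omega)}$. With these the reaction is immediate, the drift term $\chi\theta\into f(\me)\me^{\theta-1}\nabla\ce\cdot\nabla\phi\,\dx$ is handled by Hölder against $\|\nabla\phi\|_r$, and after a time integration the dissipation term is bounded through estimate \eqref{stima_9} from Corollary \ref{cor:nonlinear_estimates} with $p=\theta$.

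The decisive step, where the hypothesis $\theta>\gamma/2$ enters, concerns the two remaining pieces that carry $\me^{\theta-1}\nabla\me$. The key identity $\theta\me^{\theta-1}\nabla\me = \nabla\me^\theta$ rewrites both as $L^2$-pairings involving $\nabla\me^\theta$ only. Setting $p:=2\theta-\gamma+1$, the condition $\theta>\gamma/2$ is exactly equivalent to $p>1$, while by construction $(p+\gamma-1)/2=\theta$; hence estimate \eqref{stima_10} delivers the $\ep$-uniform control $\|\nabla\me^\theta\|_{L^2(0,T;L^2(\Omega))}\leq C(1+T)^{1/2}$. Combined with the uniform $L^\infty$ bounds on $\me, D_\ep(\me), \nabla\ce$ and with $|f(\me)|\leq k_f\me$, this yields a pointwise-in-time estimate of the form $C\|\nabla\me^\theta(\cdot,t)\|_{L^2(\Omega)}\|\phi\|_{W^{1,r}(\Omega)}$ for those two terms.

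Collecting everything and taking the supremum over $\|\phi\|_{W_0^{1,r}(\Omega)}\leq 1$ produces the pointwise bound $\|\partial_t\me^\theta(\cdot,t)\|_{(W_0^{1,r}(\Omega))^\ast} \leq C_1+C_2\|\nabla\me^\theta(\cdot,t)\|_{L^2(\Omega)}+C_3\into D_\ep(\me)\me^{\theta-2}|\nabla\me|^2\,\dx$; a Cauchy--Schwarz in time on the middle term together with Corollary \ref{cor:nonlinear_estimates} then closes the estimate in $L^1(0,T)$ with a constant depending only on $T$ and independent of $\ep$. The main obstacle is precisely the $\ep$-uniform control of $\nabla\me$ in the drift term $-\theta\into D_\ep(\me)\me^{\theta-1}\nabla\me\cdot\nabla\phi\,\dx$: the degeneracy of $D_\ep$ near $\me=0$ forbids any naive $\ep$-independent bound on $\|\nabla\me\|_{L^2}$, and it is precisely the matching $\theta>\gamma/2$ together with the identity $\theta\me^{\theta-1}\nabla\me=\nabla\me^\theta$ that allows \eqref{stima_10} to supply the required $\ep$-independent $L^2$ control.
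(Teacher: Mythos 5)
Your proposal is correct and follows essentially the same route as the paper: a duality argument against test functions bounded in $W_0^{1,r}(\Omega)$, one integration by parts producing the same five contributions, and closure via the uniform bounds of Corollary \ref{cor:classical} together with \eqref{stima_9} and \eqref{stima_10}. The only (cosmetic) difference is that you close the two $\nabla\me$--carrying terms with the identity $\theta\me^{\theta-1}\nabla\me=\nabla\me^{\theta}$ and Cauchy--Schwarz applied to \eqref{stima_10} at $p=2\theta-\gamma+1$, whereas the paper splits them by Young's inequality into two quadratic gradient expressions each covered by \eqref{stima_9} or \eqref{stima_10}; both uses of the hypothesis $\theta>\gamma/2$ are equivalent.
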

\begin{proof}
Let $\zeta\in C_0^\infty(\Omega)$ be such that $\|\zeta\|_{W_0^{1,r}(\Omega)}\leq 1$. We denote with $C_{\infty,f}$ the $\ep-$independent $L^\infty-$bound of a generic function $f$. An integration by parts gives
\begin{align*}
    \frac{1}{\theta}\into \partial_t \me^\theta \zeta \dx = & -(\theta-1)\into \me^{\theta-2} D_\ep (\me) |\nabla\me|^2 \zeta \dx -\into \me^{\theta-1} D_\ep (\me) \nabla\me\cdot\nabla \zeta \dx\\
    &+k_f\chi(\theta-1)\into \me^{\theta-1} \nabla\me\cdot \nabla\ce \zeta \dx+k_f\chi\into \me^{\theta} \nabla\ce\cdot \nabla\zeta \dx\\
    &+\into \me^{\theta-1} M(\me) \zeta \dx.
\end{align*}
For $T>0$, the density of $C_0^\infty(\Omega)$ in $W_0^{1,r}(\Omega)$ and an integration of the above equality in time, give
\begin{equation}\label{est:time0}
    \begin{aligned}
    \frac{1}{\theta} \bigl\|\partial_t \me^\theta\bigr\|_{L^1\left([0,T);(W_0^{1,r}(\Omega))^\ast\right)} \leq & (\theta-1)\int_0^T\into \me^{\theta-2} D_\ep (\me) |\nabla\me|^2 |\zeta| \dx\dt \\
    &+\int_0^T\into \me^{\theta-1} D_\ep (\me) |\nabla\me\cdot\nabla \zeta| \dx\dt\\
    &+k_f\chi(\theta-1)\int_0^T\into \me^{\theta-1} |\nabla\me\cdot \nabla\ce \zeta| \dx\dt\\
    &+k_f\chi\int_0^T\into \me^{\theta}| \nabla\ce\cdot \nabla\zeta| \dx\dt\\
    &+\int_0^T\into \me^{\theta-1} |M(\me) \zeta| \dx\dt.
\end{aligned}
\end{equation}

We now estimate each term on the r.h.s. separately. We start fixing $p>1$ such that $$\theta \geq \max\left\{p,\frac{p+\gamma-1}{2}\right\}.$$ Thanks to \eqref{stima_9} and the boundedness of $\zeta$ and $\me$ we can deduce that
\begin{equation}\label{est:time1}
\begin{aligned}
    \int_0^T\into \me^{\theta-2} D_\ep (\me) |\nabla\me|^2 |\zeta| \dx\dt \leq & C_{\infty,\zeta} C_{\infty,\me}^{\theta-p}  \int_0^T\into \me^{p-2} D_\ep (\me) |\nabla\me|^2  \dx\dt \\
    \leq & C_{\infty,\zeta} C_{\infty,\me}^{\theta-p} C(1+T)  . 
\end{aligned}
\end{equation}
The second term in \eqref{est:time0} can be similarly estimated by considering \eqref{stima_9} and the fact that $D_\ep$ as well $D$ is a continuous function. Indeed, by Young's inequality
\begin{equation}\label{est:time2}
\begin{aligned}
\int_0^T\into \me^{\theta-1} D_\ep (\me) |\nabla\me\cdot\nabla \zeta| \dx\dt \leq & \frac12C_{\infty,\nabla\zeta} \int_0^T\into \me^{p-2} D_\ep (\me) |\nabla\me|^2 \dx\dt \\
&+\frac12C_{\infty,\nabla\zeta} \int_0^T\into \me^{2\theta-p} D_\ep (\me) |\nabla\me|^2 \dx\dt \\
\leq &\frac12C_{\infty,\nabla\zeta} C(1+T) \\
&+\frac12C_{\infty,\nabla\zeta}C_{\infty,D} C_{\infty,\me}^{2\theta -p-\gamma+1} \int_0^T\into \me^{\gamma-1}  |\nabla\me|^2 \dx\dt \\
\leq &\frac12C_{\infty,\nabla\zeta} C(1+T) \\
&+\frac12C_{\infty,\nabla\zeta}C_{\infty,D} C_{\infty,\me}^{2\theta -p-\gamma+1} C(1+T),
\end{aligned}
\end{equation}
where in the last inequality we use \eqref{stima_10} with $p=2$. Using Young's inequality, the bound in \eqref{stima_10} and the bound for $\ce$ in \eqref{stima_8} we have
 \begin{equation}\label{est:time3}
\begin{aligned}
\int_0^T\into \me^{\theta-1} |\nabla\me\cdot \nabla\ce \zeta| \dx\dt \leq & \frac12 C_{\infty,\zeta}\int_0^T\into \me^{2\theta-2} |\nabla\me|^2 \dx\dt\\
&+\frac12 C_{\infty,\zeta}\int_0^T\into |\nabla\ce|^2 \dx\dt\\
\leq & \frac12 C_{\infty,\zeta}C_{\infty,\me}^{2\theta -p-\gamma-1}C(1+T)+\frac12 C_{\infty,\zeta} C_{\infty,\nabla\ce}^2 |\Omega|T. 
\end{aligned}
\end{equation}
Invoking again the bound for $\ce$ in \eqref{stima_8} we can easily bound
\begin{equation}\label{est:time4}
    \int_0^T\into \me^{\theta}| \nabla\ce\cdot \nabla\zeta| \dx\dt\leq C_{\infty,\nabla\zeta}C_{\infty,\me}^\theta  C_{\infty,\nabla\ce}^2|\Omega|T.
\end{equation}
Finally, by \eqref{ass_M} 
\begin{equation}\label{est:time5}
    \int_0^T\into \me^{\theta-1} |M(\me) \zeta| \dx\dt\leq \mu C_{\infty,\zeta}C_{\infty,\me}^\theta(1+C_{\infty,\me})|\Omega|T.
\end{equation}
Putting together \eqref{est:time1}-\eqref{est:time5}, we can deduce the existence of a constant $C>0$ independent on $\ep$ such that \eqref{est:time0} reduce to
\begin{equation*}
  \bigl\|\partial_t \me^\theta\bigr\|_{L^1\left([0,T);(W_0^{1,r}(\Omega))^\ast\right)} \leq 
    C(1+T).
\end{equation*}
\end{proof}

\begin{lem}\label{lem:conv}
Let $\mu,\chi\geq 0$ and $\gamma$  under condition \eqref{cod_gamma}. For any $T\in (0,\infty]$ there exist $c$ and $d$ belonging to $L^{2}_{loc}([0,T);W^{1,2}(\Omega))\cap L^{\infty}\left([0,T)\times\Omega\right)$ and  $m\in L^{\infty}\left([0,T)\times\Omega\right)$ such that, up to a non relabel sub-sequence,
\begin{align}
     \ce \overset{\ast}{\rightharpoonup} c,\, \de \overset{\ast}{\rightharpoonup} d  &\quad\mbox{in } L^{\infty}\left((0,T)\times\Omega \right),\label{conv2}\\
     \nabla\ce\overset{\ast}{\rightharpoonup} \nabla c,\,  &\quad \mbox{in } L^{\infty}((0,T)\times\Omega),\label{conv3}\\
     \me \rightarrow m &\quad \mbox{a.e. in }(0,T)\times\Omega,\label{conv4}\\
      \me \rightarrow m &\quad \mbox{in }L^2([0,T);L^2(\Omega)),\label{conv5}\\
      D_\ep(\me)\nabla \me \rightharpoonup D(m)\nabla m &\quad \mbox{in }L_{loc}^2([0,T);L^2(\Omega)),\label{conv6}
\end{align}
as $\ep \to 0$.
\end{lem}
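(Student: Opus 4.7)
The weak-$\ast$ convergences \eqref{conv2}--\eqref{conv3} are the easiest: Corollary \ref{cor:classical} provides uniform (in $\ep$) bounds on $\ce, \de$ in $L^\infty((0,T)\times\Omega)$ and on $\nabla \ce$ in $L^\infty((0,T)\times\Omega)$, so Banach--Alaoglu directly yields limit functions $c,d$ in the corresponding spaces with the stated weak-$\ast$ convergences. Moreover $c\ge 0$, $0\le d\le 1$ follow by preservation of sign, and the bound \eqref{stima_8} together with the gradient bound shows $c,d\in L^2_{loc}([0,T);W^{1,2}(\Omega))$.

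The core difficulty is the strong convergence \eqref{conv4}--\eqref{conv5} of $\me$, which must be obtained via an Aubin--Lions type compactness argument applied to a suitable power of $\me$. Fix $\theta>\max\{1,\gamma/2\}$ and set $p=2\theta-\gamma+1>1$; then $\me^\theta = \me^{(p+\gamma-1)/2}$, so \eqref{stima_10} together with the $L^\infty$ bound \eqref{stima_8} gives
\[
\me^\theta \ \text{bounded in}\ L^2_{loc}([0,T);W^{1,2}(\Omega)).
\]
Lemma \ref{lem:timeder} supplies the compatible time estimate
\[
\partial_t \me^\theta \ \text{bounded in}\ L^1([0,T);(W_0^{1,r}(\Omega))^\ast).
\]
Since $W^{1,2}(\Omega)\hookrightarrow\hookrightarrow L^2(\Omega) \hookrightarrow (W_0^{1,r}(\Omega))^\ast$ compactly for suitable $r$, the Aubin--Lions lemma (as recalled in Appendix \ref{pre}) yields strong convergence $\me^\theta \to m^\theta$ in $L^2_{loc}([0,T);L^2(\Omega))$ along a non-relabelled subsequence, and in particular a.e.\ convergence on $(0,T)\times\Omega$. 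Since $s\mapsto s^{1/\theta}$ is continuous on $[0,\infty)$, this upgrades to $\me\to m$ a.e., proving \eqref{conv4}. The convergence \eqref{conv5} then follows from the uniform $L^\infty$ bound of Proposition \ref{prop:Linfty} by dominated convergence on the bounded set $(0,T)\times\Omega$.

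It remains to establish \eqref{conv6}. First, since $\me$ is uniformly bounded and $D\in C^2([0,\infty))$, we have $\|D_\ep(\me)\|_{L^\infty}\le C$ uniformly in $\ep$; combining with \eqref{stima_9} taken with $p=2$ we obtain
\[
\int_0^t\into |D_\ep(\me)\nabla\me|^2\dx\dt \leq \|D_\ep(\me)\|_{L^\infty}\int_0^t\into D_\ep(\me)|\nabla\me|^2\dx\dt \leq C(1+t),
\]
so $D_\ep(\me)\nabla\me$ is bounded in $L^2_{loc}([0,T);L^2(\Omega))$ and admits a weak limit $\xi$ along a subsequence. To identify $\xi$ I will use the primitive: by \eqref{phiepsilon},
\[
D_\ep(\me)\nabla\me = \nabla \Phi_\ep(\me), \qquad \Phi_\ep(\me)=\Phi(\me+\ep)-\Phi(\ep).
\]
The a.e.\ convergence of $\me$ together with the continuity of $\Phi$ and the uniform $L^\infty$ bound on $\me$ give, via dominated convergence, $\Phi_\ep(\me)\to \Phi(m)$ strongly in $L^2_{loc}([0,T);L^2(\Omega))$. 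Passing to the limit in the sense of distributions in the identity above then yields $\xi=\nabla\Phi(m)=D(m)\nabla m$, which is \eqref{conv6}. The main obstacle in the whole argument is really the calibration of the exponent $\theta$ in Lemma \ref{lem:timeder} so that the Aubin--Lions triple closes; everything else is qualitative extraction plus identification via continuity of $D$ and $\Phi$.
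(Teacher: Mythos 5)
Your proposal is correct and follows essentially the same route as the paper's proof: weak-$\ast$ extraction from the uniform $L^\infty$ bounds for $c_\ep, d_\ep$, Aubin--Lions applied to the power $m_\ep^{(p+\gamma-1)/2}$ using \eqref{stima_8}, \eqref{stima_10} and Lemma \ref{lem:timeder}, and identification of the weak limit of $D_\ep(m_\ep)\nabla m_\ep$ through the primitive $\Phi_\ep$ and a.e.\ convergence. Your explicit remark that $s\mapsto s^{1/\theta}$ is continuous (to pass from a.e.\ convergence of $m_\ep^\theta$ to that of $m_\ep$) makes a step the paper leaves implicit slightly more transparent, but it is not a different argument.
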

\begin{proof}
The desired convergences for $\ce$ and $\de$ in \eqref{conv2} are direct consequences of the $L^\infty-$estimates in \eqref{stima_8}, combined with \eqref{stima_sign} and \eqref{stima_10}. For $T>0$ and any $p>1$ the bounds for $\me$ in \eqref{stima_8} and \eqref{stima_10} give that the sequence $\left\{\me^{\frac{p+\gamma-1}{2}}\right\}_\ep$ for $\ep\in(0,1)$ is bounded in $L^2\left([0,T);W^{1,2}(\Omega)\right)$. In addition, Lemma \ref{lem:timeder} gives that $\left\{\partial_t\me^{\frac{p+\gamma-1}{2}}\right\}_\ep$ for $\ep\in(0,1)$ is bounded in $L^1\left([0,T);W^{1,2}(\Omega)^\ast\right)$. Thus, we are in the position of apply Aubin-Lions lemma \cite[Corollary 4]{Simon86} in order to deduce the relative compactness  of the sequence $\left\{\me^{\frac{p+\gamma-1}{2}}\right\}_{\ep\in(0,1)}$ in $L^2\left([0,T);L^2(\Omega)\right)$ and the existence in the space $L^2\left([0,T);L^2(\Omega)\right)$ of a function that we can directly identify with $m^{\frac{p+\gamma-1}{2}}$ such that, along a  non relabel sub-sequence, $\me^{\frac{p+\gamma-1}{2}}\to m^{\frac{p+\gamma-1}{2}}$ in $L^2\left([0,T);L^2(\Omega)\right)$ as $\ep\to 0$. The uniform $L^\infty-$boundedness of $\me$ and Lebesgue's dominated convergence Theorem allow to deduce \eqref{conv4} and \eqref{conv5}. 

We are left in proving \eqref{conv6}. Invoking again the uniform bound for $\me$ in \eqref{stima_8}, the regularity of $D_\ep$ and \eqref{stima_9} setting $p=2$, we have that $D_\ep(\me)\nabla \me$ is uniformly bounded in $L_{loc}^2\left([0,T);L^2(\Omega)\right)$, thus, up to sub-sequence, it converges weakly in $L^2\left(0,T;L^2(\Omega)\right)$. Notice that the weak convergence of $D_\ep(\me)\nabla \me$ implies the weak convergence of $\nabla \Phi_e(\me)$ to some function $F$, where $\Phi_\ep$ is defined in \eqref{phiepsilon}. In order to identify $F=D(m)\nabla m$ we notice that the convergence in \eqref{conv4} ensure that
$\Phi_\ep (\me+\ep) \to \Phi (m)$ as $\ep \to 0$ a.e. in $\Omega\times (0,T)$, with $\Phi$ as in \ref{ass_D}. The strong convergence and Lebesgue's dominated convergence Theorem allow to identify the weak limit $F$ with $\nabla \Phi(m)=D(m)\nabla m$, concluding the proof.
\end{proof}

We are now in the position of proving our main result. 
\begin{proof}[Proof of Theorem \ref{thm1}] Let $T\in (0,\infty]$ and let $(\me,\ce,\de)$ be a global and bounded classical solution to \eqref{eq:regulare} in the sense of Corollary \ref{cor:classical} on $[0,T)$.
Let $\phi\in C_0^{\infty}\left(\overline{\Omega}\times \left[0,T\right)\right)$ be a test function. Multiplying each equation of \eqref{eq:regulare} by $\phi$ and integrating by parts we get
\begin{equation*}
    \begin{aligned}
-\int_0^T\into \me\phi_t\, \dx\dt-\int_{\Omega} m_{\ep,0}(x,0)\phi(x,0)\,\dx=&-\int_0^T\into D_\ep(\me)\nabla \me\nabla\phi \,\dx\dt\\
&+\chi\int_0^T\into f(\me)\nabla \ce\nabla\phi \,\dx\dt+\int_0^T\into M(\me)\phi \,\dx\dt,
\end{aligned}
\end{equation*}
\begin{equation*}
  -\int_0^T\into \ce\phi_t\,\dx\dt-\int_{\Omega}c_{\ep,0}(x,0)\phi(x,0)\,\dx=-\int_0^T\into\nabla \ce\nabla\phi\,\dx\dt+\int_0^T\into(\me+\de-\ce)\phi\,\dx\dt,
\end{equation*}
\begin{equation*}
    -\int_0^T\into \de\phi_t\,\dx\dt-\int_{\Omega}d_{\ep,0}(x,0)\phi(x,0)\dx=\int_0^T\into  h(\me)(1-\de)\phi\,\dx\dt.
\end{equation*}
The convergences obtained in \eqref{conv2}-\eqref{conv6}, together with \eqref{indataregul}, allow to pass to the limit in all the terms in the above equalities, in particular in the nonlinear terms involving $f$ and $h$. 
Then we can conclude that the limiting triple $(m,c,d)$ obtained in Lemma \ref{lem:conv} is a weak solution to \eqref{eq:main_Gen} in the sense of Definition \ref{def:weak_sol}.  Finally, the boundedness is a direct consequence of \eqref{conv2}, \eqref{conv3} and \eqref{conv4}.
\end{proof}

\begin{figure}[H]
  \centering
        \includegraphics[width=7cm,height=6cm]{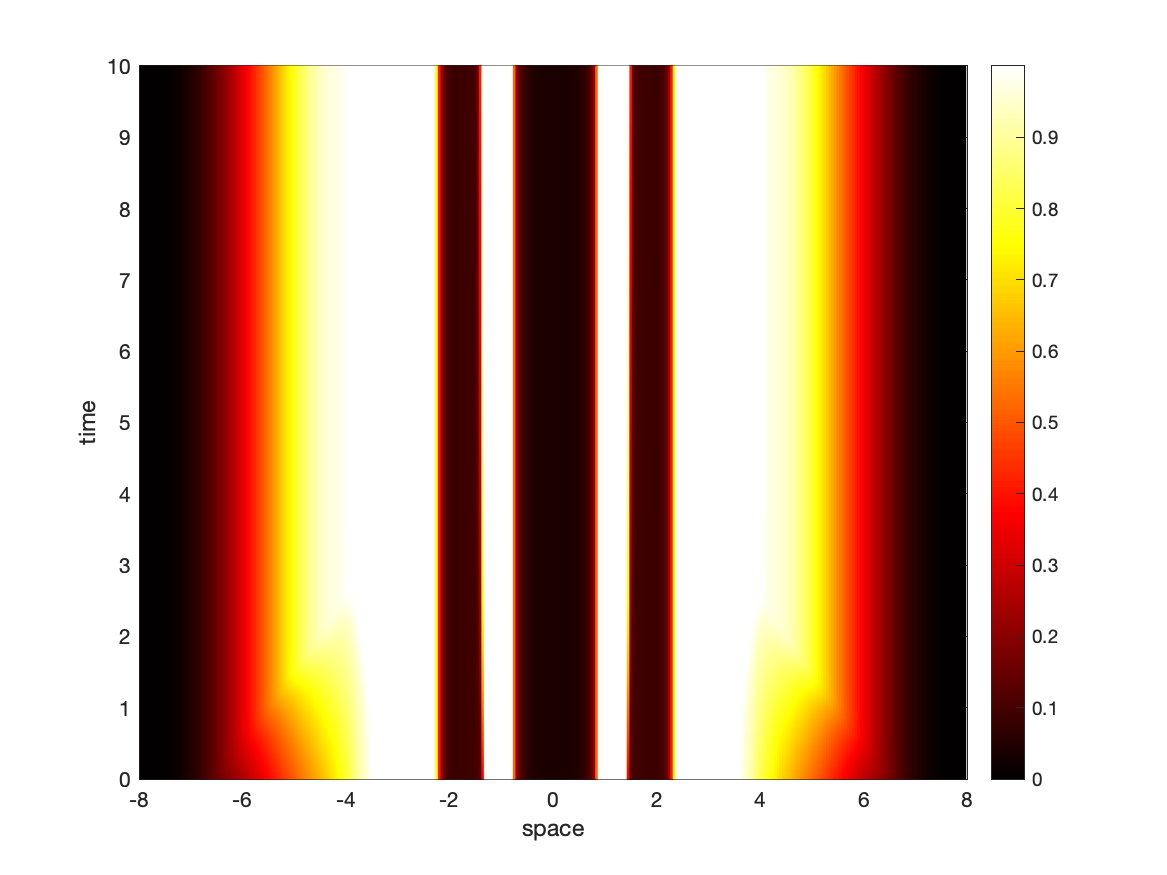}
    \includegraphics[width=7cm,height=6cm]{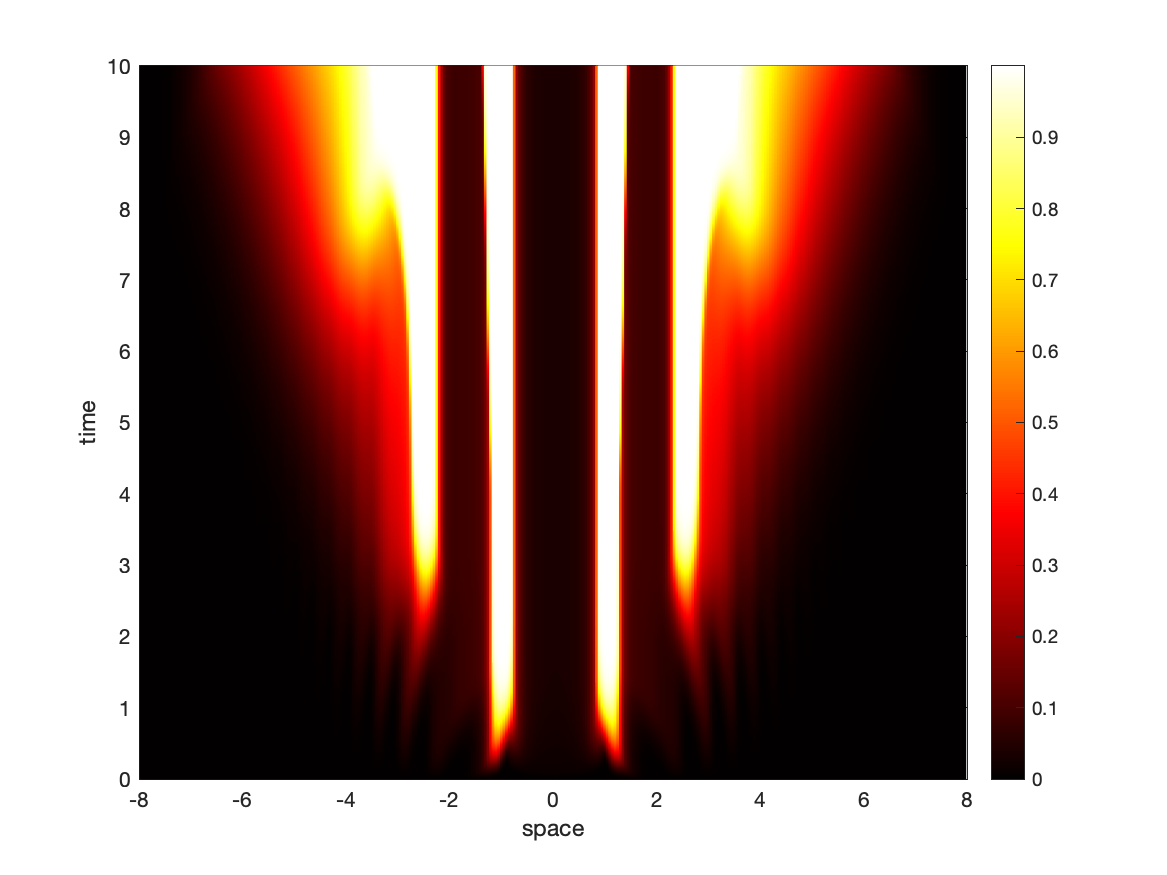}\\
       \includegraphics[width=7cm,height=6cm]{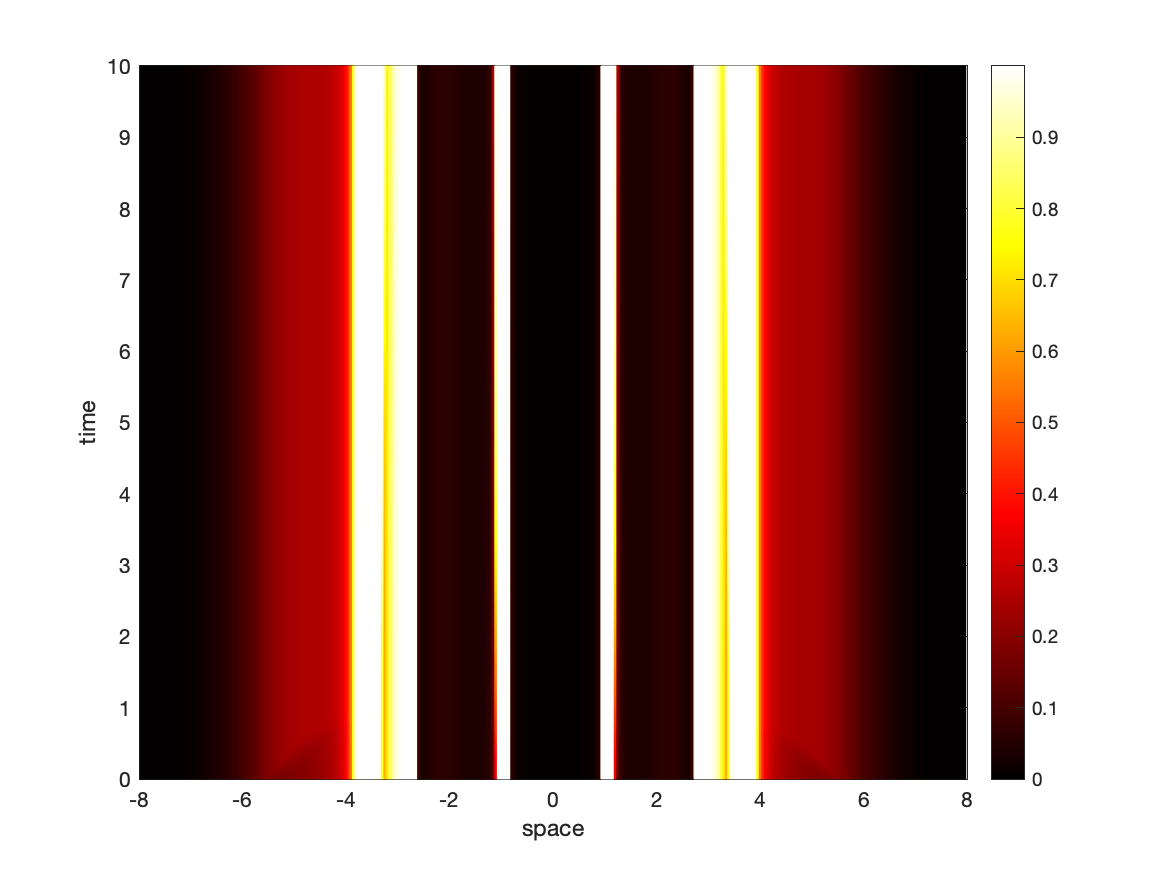}
 \includegraphics[width=7cm,height=6cm]{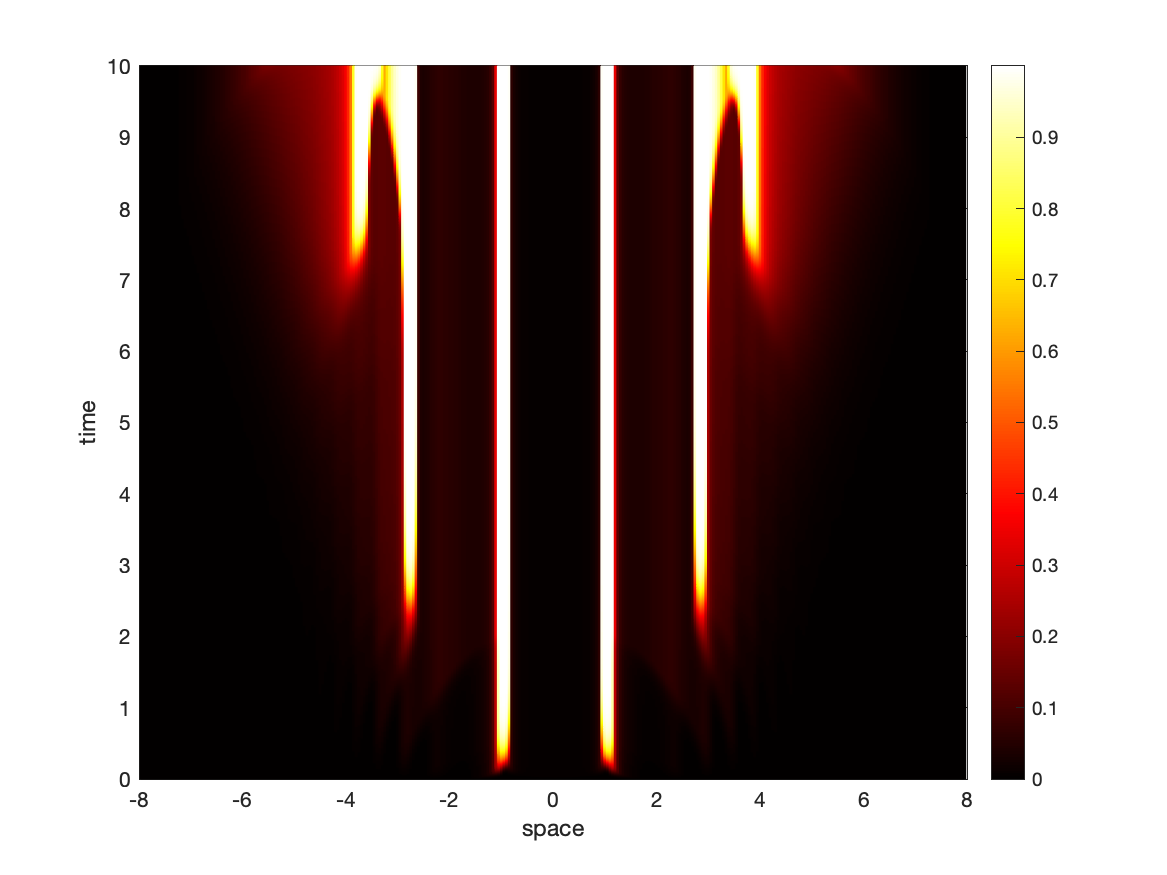}
\caption{Evolution in time of damage oligodendrocytes under the action of linear diffusion (left column) and quadratic $(\gamma=2)$ porous medium diffusion (right column) for activated macrophages in dimension $d=1$. In the top row we used a chemotaxis coefficient $\chi=4$ while in the bottom row we used a chemotaxis coefficient $\chi=10$.}
\label{fig:comp_1}
\end{figure}

\begin{figure}[H]
  \centering
        \includegraphics[width=7cm,height=6cm]{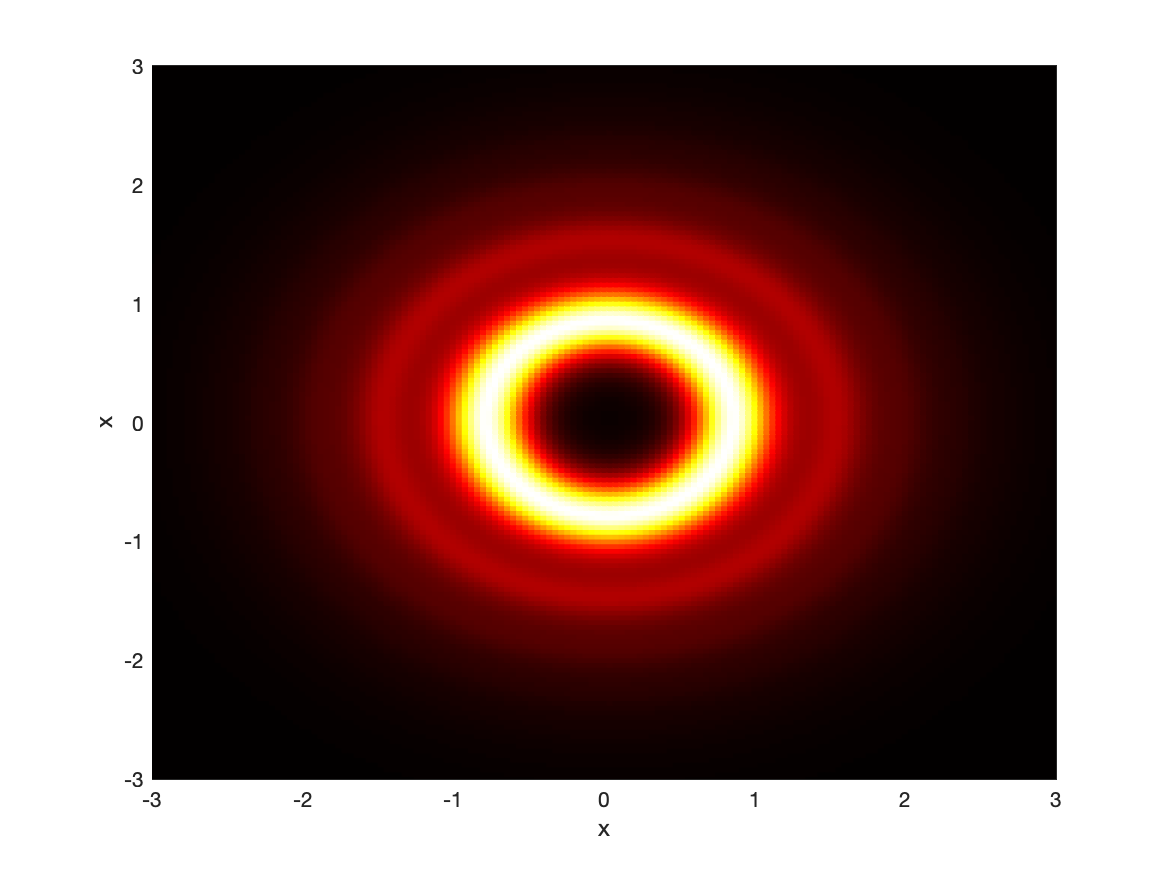}
    \includegraphics[width=7cm,height=6cm]{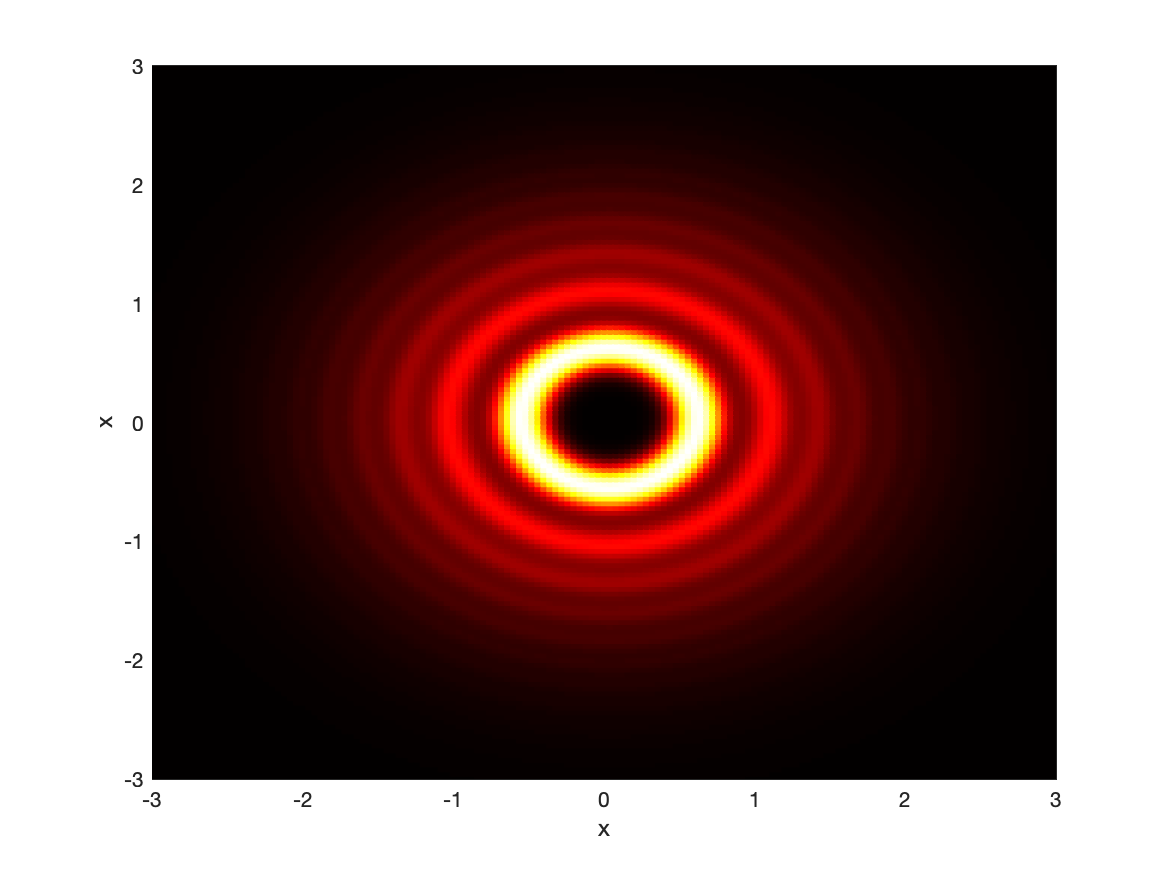}\\
       \includegraphics[width=7cm,height=6cm]{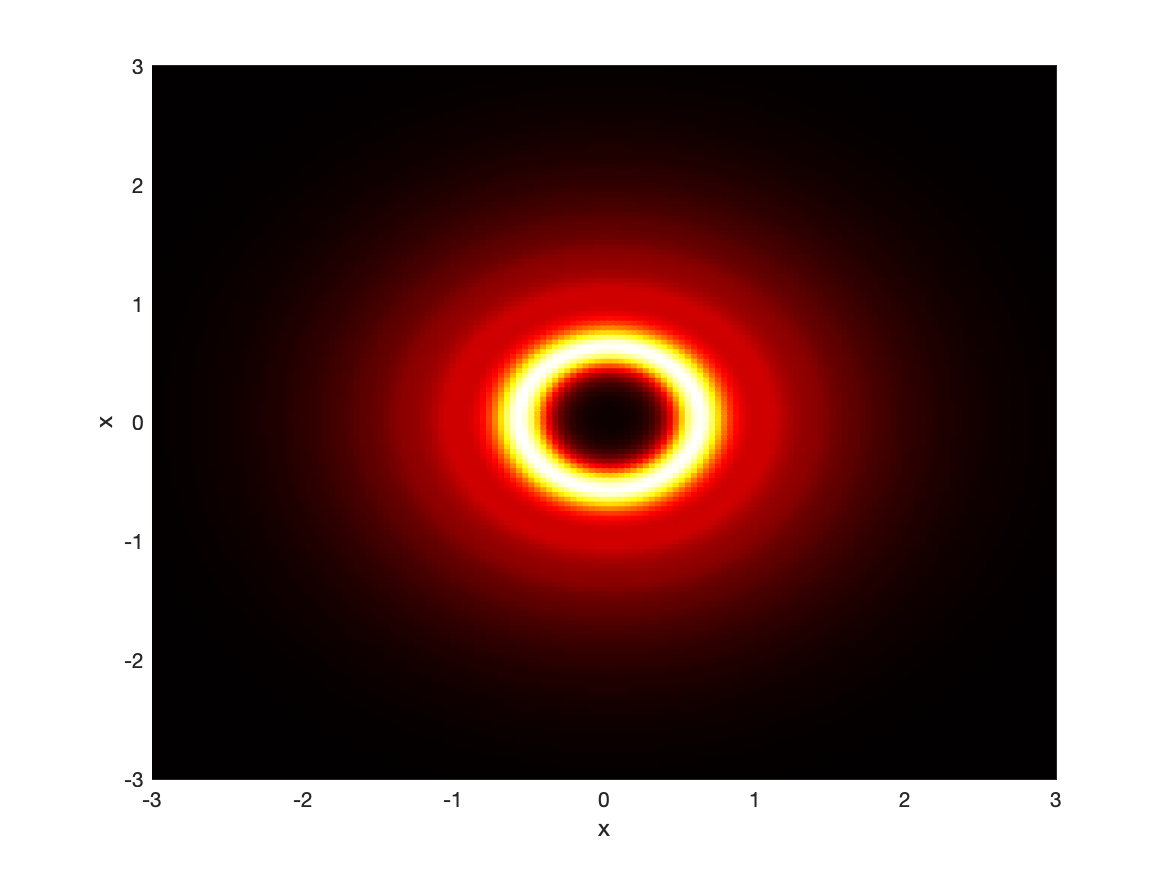}
 \includegraphics[width=7cm,height=6cm]{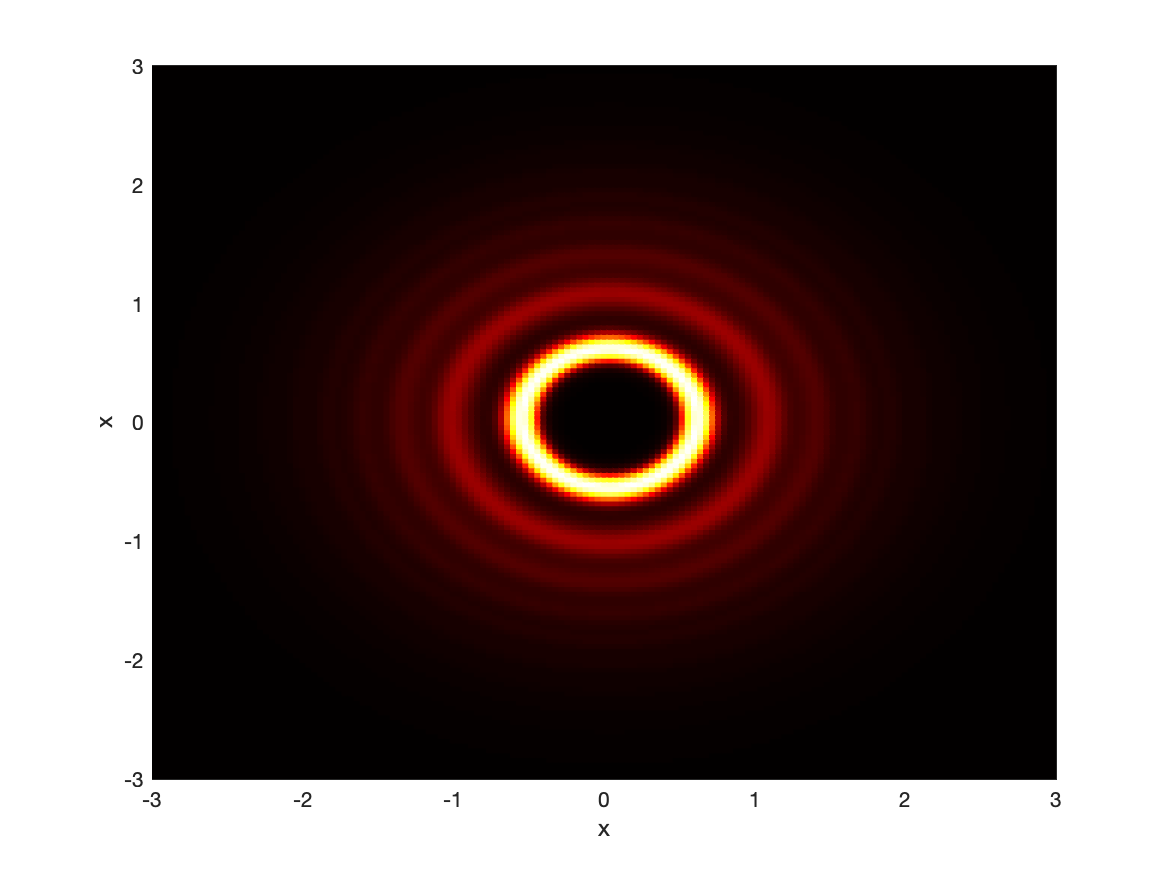}
\caption{Evolution in time of damage oligodendrocytes under the action of linear diffusion (left column) and quadratic $(\gamma=2)$ porous medium diffusion (right column) for activated macrophages in dimension $d=2$. In the top row we consider a chemotaxis coefficient $\chi=4$ while in the bottom row we consider a chemotaxis coefficient $\chi=10$. }
\label{fig:comp_2}
\end{figure}

\section{Conclusion and perspectives}\label{sec:conclusion}

We conducted an investigation into the existence of global weak solutions for a system of chemotaxis type with nonlinear degenerate diffusion, which arises in the modeling of Multiple Sclerosis disease. Our approach involved employing a regularization procedure on the degenerate diffusion coefficient, enabling us to obtain appropriate a priori estimates on the regularized problem. It is worth noting that the system we studied is a modification of a model originally designed with linear diffusion.

Intriguingly, as highlighted in Figures \ref{fig:comp_1} and \ref{fig:comp_2}, the incorporation of nonlinear diffusion leads to the formation of more interesting patterns. These patterns can potentially represent various types of plaque formation observed in Multiple Sclerosis and deserve further studies. Numerical simulations are performed according to the scheme sketched in Appendix \ref{sec:appendix num}

Moving forward, an interesting extension of our current work would involve exploring possible stationary states and radially symmetric solutions. By investigating these aspects, we can gain further insights into the long-term behavior and spatial characteristics of the system under consideration.

\section*{Acknowledgments}

The research of SF is  supported by the Ministry of University and Research (MIUR), Italy under the grant PRIN 2020- Project N. 20204NT8W4, Nonlinear Evolutions PDEs, fluid dynamics and transport equations: theoretical foundations and applications and  by the  INdAM project N. E55F22000270001 ``Fenomeni di trasporto in leggi di conservazione e loro applicazioni''. SF is also supported by University of L'Aquila 2021 project 04ATE2021 - ``Mathematical Models For Social Innovations: Vehicular And Pedestrian Traffic, Opinion Formation And Seismology.'' ER and SF are supported by the INdAM project N.E53C22001930001 ``MMEAN-FIELDSS''.

\begin{appendices}
\section{Collection of useful inequalities}\label{pre}
In this Appendix we list, without proofs, several helpful auxiliary results that are often used in the analysis of the systems similar to \eqref{eq:main_Gen}. Most of the can be found in \cite{li2016} and references cited there.
Firstly, we recall the Gagliardo-Nirenberg interpolation inequality \cite{niremberg} in the form \cite{li2016,winkler2002}
\begin{lem}[\cite{li2016}, Lemma 2.3]\label{Gag_Nir}
Let $\Omega\subset\mathbb{R}^n$ be a bounded domain with smooth boundary $\partial\Omega$. Let $R  \geq 1$, $0<Q\leq P\leq \infty$, $S>0$ be such that
\begin{equation}\label{condition_Gag_Nir}
    \frac{1}{R}\leq \frac{1}{n}+\frac{1}{P}.
\end{equation}
Then there exists a positive constant $C>0$ such that, 
\begin{equation}\label{eq:Gag_Nir}
\|u\|_{L^P(\Omega)}\leq C\left(\|\nabla u\|^{A}_{L^R(\Omega)}\|u\|_{L^{Q}(\Omega)}^{1-A}+\|u\|_{L^{S}(\Omega)}\right),
\end{equation}
for all $u\in W^{1,R}\cap L^{Q}(\Omega)$, where
\[
A=\displaystyle{\frac{\frac{1}{Q}-\frac{1}{P}}{\frac{1}{Q}+\frac{1}{n}-\frac{1}{R}}}.
\]
\end{lem}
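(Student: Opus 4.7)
The plan is to establish the Gagliardo--Nirenberg inequality in two stages: first derive the classical whole-space version on $\mathbb{R}^n$ (without the additive $L^S$ term), and then transfer it to the bounded domain $\Omega$ via a Stein universal extension operator, absorbing the lower-order pieces produced by the extension into the $\|u\|_{L^S(\Omega)}$ correction.

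\textbf{Stage 1 (on $\mathbb{R}^n$).} For $u\in C_c^\infty(\mathbb{R}^n)$ I would combine the Sobolev embedding
\[
\|u\|_{L^{R^\ast}(\mathbb{R}^n)} \le C\,\|\nabla u\|_{L^R(\mathbb{R}^n)},\qquad \frac{1}{R^\ast}=\frac{1}{R}-\frac{1}{n},
\]
valid for $R<n$ (with the obvious modifications in the borderline cases $R=n$ and $R>n$, where one invokes embeddings into $\mathrm{BMO}$, $L^\infty$ or Morrey spaces, respectively) with the Hölder interpolation between $L^{R^\ast}$ and $L^Q$:
\[
\|u\|_{L^P(\mathbb{R}^n)} \le \|u\|_{L^{R^\ast}(\mathbb{R}^n)}^{A}\,\|u\|_{L^Q(\mathbb{R}^n)}^{1-A}\le C\,\|\nabla u\|_{L^R(\mathbb{R}^n)}^{A}\,\|u\|_{L^Q(\mathbb{R}^n)}^{1-A},
\]
valid whenever $1/P = A/R^\ast + (1-A)/Q$ and $Q\le P\le R^\ast$. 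A direct algebraic manipulation shows that this identity, together with the admissibility $A\in[0,1]$, is equivalent to the expression for $A$ displayed in the statement and to the hypothesis $1/R\le 1/n+1/P$ of \eqref{condition_Gag_Nir}. A density argument then extends the inequality to every $u\in W^{1,R}(\mathbb{R}^n)\cap L^Q(\mathbb{R}^n)$.

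\textbf{Stage 2 (transfer to $\Omega$).} Since $\partial\Omega$ is smooth, I would invoke a Stein universal extension operator $E\colon W^{1,R}(\Omega)\to W^{1,R}(\mathbb{R}^n)$ which simultaneously satisfies $\|Eu\|_{L^p(\mathbb{R}^n)}\le C_\Omega\|u\|_{L^p(\Omega)}$ for every $p\in[1,\infty]$ and $\|\nabla Eu\|_{L^R(\mathbb{R}^n)}\le C_\Omega\bigl(\|\nabla u\|_{L^R(\Omega)}+\|u\|_{L^R(\Omega)}\bigr)$. Applying Stage 1 to $Eu$ and restricting back to $\Omega$ yields
\[
\|u\|_{L^P(\Omega)}\le C_\Omega\bigl(\|\nabla u\|_{L^R(\Omega)}+\|u\|_{L^R(\Omega)}\bigr)^{A}\|u\|_{L^Q(\Omega)}^{1-A}.
\]
The elementary inequality $(a+b)^A\le a^A+b^A$, valid for $A\in[0,1]$, then splits this into the sought principal term $\|\nabla u\|_{L^R(\Omega)}^A\|u\|_{L^Q(\Omega)}^{1-A}$ plus a residual $\|u\|_{L^R(\Omega)}^A\|u\|_{L^Q(\Omega)}^{1-A}$, which is a pure $L^p$-interpolation quantity on a bounded domain. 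Hölder's inequality combined with the finiteness of $|\Omega|$ controls this residual by $C_{\Omega,S}\|u\|_{L^S(\Omega)}$, with a constant depending on $S$; this is the mechanism through which the flexibility in the parameter $S>0$ enters the final estimate.

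The main obstacle is the careful verification of the exponent identity and its equivalence with \eqref{condition_Gag_Nir}, together with the systematic treatment of the borderline regimes $R\in\{1,n\}$, $P=\infty$ and $R^\ast$ undefined, where the plain Sobolev embedding has to be replaced by its Morrey/BMO/Lipschitz counterparts. A second technical point is the availability of an extension operator that is bounded simultaneously on $L^p$ for all $p\in[1,\infty]$ and on $W^{1,R}$; this is exactly what Stein's construction provides under smoothness of $\partial\Omega$, and it is what guarantees that the $\|u\|_{L^Q(\Omega)}^{1-A}$ factor on the right-hand side is not polluted by an uncontrolled correction coming from the extension step.
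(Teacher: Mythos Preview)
The paper does not provide a proof of this lemma: it appears in Appendix~\ref{pre} among auxiliary results that are explicitly ``listed without proofs'' and attributed to \cite{niremberg,li2016,winkler2002}. Your outline therefore cannot be compared to a proof in the paper; it has to stand on its own.

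The two-stage route you propose (Sobolev embedding plus log-convexity on $\mathbb R^n$, followed by a Stein extension to pass to the bounded domain) is a standard and essentially correct approach, and your identification of the borderline regimes $R\in\{1,n\}$, $P=\infty$ as the places requiring additional care is accurate. There is, however, a genuine gap in the last step of Stage~2. You assert that the residual $\|u\|_{L^R(\Omega)}^{A}\|u\|_{L^Q(\Omega)}^{1-A}$ is controlled by $C_{\Omega,S}\|u\|_{L^S(\Omega)}$ via ``H\"older's inequality combined with the finiteness of $|\Omega|$''. That mechanism only works when $S\ge\max(R,Q)$, since on a bounded domain H\"older gives $\|u\|_{L^r}\le C\|u\|_{L^s}$ only for $r\le s$. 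The lemma, however, allows every $S>0$, and it is precisely the small-$S$ regime that is used later in the paper (e.g.\ in the proofs of Lemmas~\ref{lemm:primo_incubo} and \ref{lemm:secondo_incubo}, where $S=Q=\tfrac{2}{p+\gamma-1}<2=R$). In that range your argument breaks down: for a smooth bump $u_\lambda(x)=\phi(x/\lambda)$ one has $\|u_\lambda\|_{L^R}^{A}\|u_\lambda\|_{L^Q}^{1-A}\sim\lambda^{n(A/R+(1-A)/Q)}$ while $\|u_\lambda\|_{L^S}\sim\lambda^{n/S}$, and the former is \emph{not} dominated by the latter once $1/S>A/R+(1-A)/Q$.

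The standard repair is an Ehrling-type absorption step that you do not mention. From the compact embedding $W^{1,R}(\Omega)\hookrightarrow\hookrightarrow L^R(\Omega)\hookrightarrow L^{\min(R,S)}(\Omega)$ one obtains $\|u\|_{L^R(\Omega)}\le\delta\|\nabla u\|_{L^R(\Omega)}+C_\delta\|u\|_{L^S(\Omega)}$ for every $\delta>0$; inserting this into the residual produces a term $\delta^{A}\|\nabla u\|_{L^R}^{A}\|u\|_{L^Q}^{1-A}$ that is absorbed into the principal part for $\delta$ small, plus a genuinely lower-order remainder. Without this additional ingredient your Stage~2 is incomplete for the full range of $S>0$ asserted in the statement.
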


The next Lemma ensures that the conditions in Lemmas \ref{lemm:primo_incubo} and \ref{lemm:secondo_incubo} below are simultaneously satisfied. 
\begin{lem}[\cite{li2016}, Lemma 3.8]\label{exponents}
Let $n\geq 2$ and $\gamma$ be under condition \eqref{cod_gamma}. Then there exists unbounded sequences $\left\{p_k\right\}_{k\in\N}$ and $\left\{q_k\right\}_{k\in\N}$ such that for each $k\in\N$ $q_k, p_k>1$ and
\begin{align*}
    &  p_k>\max\left\{\gamma-\frac{n-2}{nq_k},\frac{2q_k(n-2)}{2q_k+n-2}\right\},\\
    &   \frac{q_k(p_k-\gamma+1)}{q_k-1} \left(\frac{n-\frac{nq_k-n+2}{q_k(p_k-\gamma+1)}}{(p_k+\gamma-1)n+2-n}\right)<1,\\
    &\frac{1}{q_k}>\frac{2}{p_k+\gamma-1}\left(\frac{\frac{p_k+\gamma-1}{2}-\frac{(p_k+\gamma-1)(2q_k+n-2)}{4nq_k}}{\frac{p_k+\gamma-1}{2}+\frac{1}{n}-\frac{1}{2}}\right).
\end{align*}
\end{lem}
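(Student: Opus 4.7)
The plan is to reduce the three inequalities in the lemma to elementary linear inequalities in $p_k$ and $q_k$, exhibit a one-parameter family of choices making them hold simultaneously for large $k$, and thereby produce the desired unbounded sequences. The role of the hypothesis \eqref{cod_gamma} will appear as the exact compatibility condition between the reductions of the second and third inequalities.

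The main work is the algebraic simplification. Writing $P=p_k+\gamma-1$ for brevity, the second inequality, after combining the prefactor $\frac{q_k(p_k-\gamma+1)}{q_k-1}$ with the nested fraction and clearing the positive denominator $(q_k-1)(nP+2-n)$, collapses to
\[
\frac{nq_k(p_k-\gamma)+n-2}{(q_k-1)(nP+2-n)}<1,
\]
which by cross-multiplying and cancelling like terms reduces to
\[
nP<2q_k\bigl(n(\gamma-1)+1\bigr).
\]
The third inequality is handled analogously: the numerator of its inner fraction simplifies to $\frac{P\,(2q_k(n-1)-(n-2))}{4nq_k}$ and the denominator to $\frac{nP+2-n}{2n}$, and cross-multiplying by positive quantities reduces it to
\[
nP>2q_k(n-1).
\]

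Combining these two reformulations yields the window
\[
\frac{n}{2\bigl(n(\gamma-1)+1\bigr)}<\frac{q_k}{p_k+\gamma-1}<\frac{n}{2(n-1)},
\]
which is non-empty if and only if $n-1<n(\gamma-1)+1$, i.e.\ $\gamma>2-\tfrac{2}{n}$, which is precisely \eqref{cod_gamma}. I would then fix any $\alpha$ in this open interval, choose any unbounded increasing sequence $p_k\to\infty$, and set $q_k:=\alpha(p_k+\gamma-1)$. Both sequences are unbounded and eventually exceed $1$. Condition 1 of the lemma is then automatic for $k$ large enough, since its two candidates satisfy $\gamma-\tfrac{n-2}{nq_k}<\gamma$ and $\tfrac{2q_k(n-2)}{2q_k+n-2}<n-2$, both uniformly bounded in $k$, while $p_k\to\infty$.

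The only step that requires genuine care is the algebraic reduction of the nested fractions in the second and third inequalities: there is no analytical obstacle, only bookkeeping, but it needs to be done cleanly so that the two-sided bound on $q_k/(p_k+\gamma-1)$ emerges transparently and the equivalence with \eqref{cod_gamma} becomes manifest. Once the window is isolated, the remainder of the argument is a free choice of $\alpha$ and the trivial verification of condition 1.
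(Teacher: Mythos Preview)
The paper does not supply its own proof of this lemma: it is quoted verbatim from \cite{li2016} in the appendix of auxiliary results and left unproved. There is therefore nothing to compare against, and the relevant question is simply whether your argument is correct. It is.

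Your algebraic reductions are accurate. With $P=p_k+\gamma-1$, the second inequality does collapse to $nP<2q_k\bigl(n(\gamma-1)+1\bigr)$ and the third to $nP>2q_k(n-1)$, and the resulting two-sided window on $q_k/P$ is non-empty precisely when $\gamma>2-\tfrac{2}{n}$, which is \eqref{cod_gamma}. Fixing any $\alpha$ strictly inside the window and setting $q_k=\alpha(p_k+\gamma-1)$ with $p_k\to\infty$ then satisfies inequalities two and three for every $k$, and the first condition is eventually satisfied because both of its candidates, $\gamma-\tfrac{n-2}{nq_k}\le\gamma$ and $\tfrac{2q_k(n-2)}{2q_k+n-2}<n-2$, stay bounded while $p_k\to\infty$. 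The only minor point worth recording in a clean write-up is that the cross-multiplications are justified because the denominators $(q_k-1)(nP+2-n)$ and $q_k(nP+2-n)$ are positive once $q_k>1$ and $P>1-\tfrac{2}{n}$, and that for large $k$ the numerators in the intermediate fractions are also positive, so the reductions are genuine equivalences in the regime you use.
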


The following Poincaré-type inequality is useful for our purpose. A detailed proof can be found in \cite[Lemma 2.2]{li2016}.
\begin{lem}\label{lemm:poin}
Let $\Omega\subset \R^n$ be a bounded smooth domain. Let $\alpha>0$ and $p\in(1,+\infty)$. Then there exists $C > 0$ such that
\[
\|u\|_{W^{1,p}(\Omega)}\leq C \left(\|\nabla u\|_{L^p(\Omega)}+\left(\into |u|^\alpha \dx\right)^{\frac{1}{\alpha}}\right),
\]
for all $u\in W^{1,p}(\Omega)$.
\end{lem}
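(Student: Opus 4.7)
The plan is to argue by contradiction using the compactness of the embedding $W^{1,p}(\Omega)\hookrightarrow L^p(\Omega)$ provided by the Rellich--Kondrachov theorem, which is available since $\Omega$ is bounded with smooth boundary. Suppose, for contradiction, that no such constant $C$ exists. Then for each $k\in\N$ one can find $u_k\in W^{1,p}(\Omega)$ with
\[
\|u_k\|_{W^{1,p}(\Omega)}> k\left(\|\nabla u_k\|_{L^p(\Omega)}+\left(\into |u_k|^\alpha\,\dx\right)^{1/\alpha}\right).
\]
Setting $v_k:=u_k/\|u_k\|_{W^{1,p}(\Omega)}$ yields a normalised sequence with $\|v_k\|_{W^{1,p}(\Omega)}=1$, while
\[
\|\nabla v_k\|_{L^p(\Omega)}+\left(\into |v_k|^\alpha\,\dx\right)^{1/\alpha}<\frac{1}{k}\xrightarrow[k\to\infty]{}0.
\]

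Next, I would invoke Rellich--Kondrachov to extract a subsequence (not relabelled) converging weakly in $W^{1,p}(\Omega)$ and strongly in $L^p(\Omega)$ to some limit $v\in W^{1,p}(\Omega)$. Since $\|\nabla v_k\|_{L^p(\Omega)}\to 0$, weak lower semicontinuity of the $L^p$-norm forces $\nabla v=0$ almost everywhere, so $v$ is constant on each connected component of $\Omega$. The strong $L^p$-convergence also implies convergence in measure, hence Fatou's lemma applied to the non-negative integrand $|v_k|^\alpha$ gives
\[
\into|v|^\alpha\,\dx\le\liminf_{k\to\infty}\into|v_k|^\alpha\,\dx=0,
\]
so $v\equiv 0$. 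Combining $v=0$ with strong $L^p$-convergence yields $\|v_k\|_{L^p(\Omega)}\to 0$, and together with $\|\nabla v_k\|_{L^p(\Omega)}\to 0$ we conclude $\|v_k\|_{W^{1,p}(\Omega)}\to 0$, contradicting the normalisation $\|v_k\|_{W^{1,p}(\Omega)}=1$.

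The main subtle point I expect to handle carefully is the regime $\alpha\in(0,1)$, in which $\bigl(\int|u|^\alpha\,\dx\bigr)^{1/\alpha}$ is only a quasi-norm and $L^\alpha(\Omega)$ is not a Banach space. The step above sidesteps this issue entirely, since the identification $v=0$ relies on Fatou's lemma applied to the measurable non-negative function $|v_k|^\alpha$, not on any norm-based compactness in $L^\alpha$. A minor bookkeeping issue is that $\Omega$ could have several connected components; but $v$ is constant on each and must have vanishing $L^\alpha$ integral on $\Omega$, which still forces $v\equiv 0$. The argument is otherwise completely standard and does not require any relation between $\alpha$ and $p$, nor between $\alpha$ and the Sobolev exponent, since we only exploit compactness into $L^p$ itself.
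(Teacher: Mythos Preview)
Your argument is correct and is the standard contradiction-via-compactness proof of such Poincar\'e-type inequalities; the paper itself does not give a proof but simply refers to \cite[Lemma~2.2]{li2016}. One cosmetic point: from strong $L^p$-convergence you should pass to a further a.e.-convergent subsequence before invoking Fatou on $|v_k|^\alpha$, rather than appealing to convergence in measure directly, but this does not affect the logic.
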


The following lemma furnishes a proper estimate of a boundary integral and was proven in \cite[Lemma 2.1]{li2016} using previous results presented in \cite[Proposition 3.2]{ishida2014}.

\begin{lem}\label{lemm:boundary}
Let $\Omega\subset \R^n$ be a bounded domain with smooth boundary. Let $q\in\left[1,+\infty\right)$ and $M\geq 0$. Then, for any $\eta>0$ there is $C_\eta>0$ such that for any $u\in C^2(\bar{\Omega})$ with 
\[\frac{\partial u}{\partial \nu}=0,\, \mbox{ on }\, \partial \Omega,\, \mbox{ and }\, \into |\nabla u| \dx\leq M,\]
the inequality
\[
\int_{\partial\Omega}|\nabla u|^{2q-2}\frac{\partial |\nabla u|^{2}}{\partial \nu} \dd\sigma \leq \eta\into |\nabla|\nabla u|^q|^2 \dx +C_\eta,
\]
holds.
\end{lem}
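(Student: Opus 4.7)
The plan is to reduce the boundary integral to a pure $L^2(\partial\Omega)$ norm of $w:=|\nabla u|^q$ via the geometry induced by the Neumann condition, and then to dominate that trace norm by a small multiple of $\|\nabla w\|_{L^2(\Omega)}^2$ plus a constant controlled by the hypothesis $\into|\nabla u|\,\dx\le M$.

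First I would handle the normal derivative on $\partial\Omega$. Because $\partial u/\partial\nu=0$, on the boundary $\nabla u$ is tangent to $\partial\Omega$, and a short computation (writing $\tfrac12\partial_\nu|\nabla u|^2=\nabla^2u(\nabla u,\nu)$ and differentiating the identity $\nabla u\cdot\nu=0$ along tangent directions) yields the well-known pointwise inequality
\[
\frac{\partial|\nabla u|^2}{\partial\nu}=-2\,\mathrm{II}(\nabla u,\nabla u)\le 2\kappa_0\,|\nabla u|^2\quad\text{on }\partial\Omega,
\]
where $\mathrm{II}$ is the second fundamental form of $\partial\Omega$ and $\kappa_0$ is an upper bound for its principal curvatures (finite because $\partial\Omega$ is smooth and compact). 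Inserting this into the left-hand side gives
\[
\int_{\partial\Omega}|\nabla u|^{2q-2}\frac{\partial|\nabla u|^2}{\partial\nu}\,\dd\sigma\le 2\kappa_0\int_{\partial\Omega}|\nabla u|^{2q}\,\dd\sigma=2\kappa_0\,\|w\|_{L^2(\partial\Omega)}^2,
\]
which is the key reduction.

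Next, I would apply a compactness-based trace inequality to $w$. The continuous trace $W^{1,2}(\Omega)\hookrightarrow L^2(\partial\Omega)$, combined with Ehrling's lemma and the compact embedding $W^{1,2}(\Omega)\hookrightarrow\hookrightarrow L^{1/q}(\Omega)$ (trivial when $1/q\le 2$; for $q=1$ one just uses $L^1$), yields that for every $\eta>0$ there is $C_\eta>0$ such that
\[
\|w\|_{L^2(\partial\Omega)}^2\le \tfrac{\eta}{4\kappa_0}\,\|w\|_{W^{1,2}(\Omega)}^2+C_\eta\,\|w\|_{L^{1/q}(\Omega)}^2.
\]
Now the Poincaré-type Lemma \ref{lemm:poin} (with $p=2$ and $\alpha=1/q$) bounds $\|w\|_{W^{1,2}(\Omega)}\le C(\|\nabla w\|_{L^2(\Omega)}+\|w\|_{L^{1/q}(\Omega)}^{\,})$, so after adjusting $\eta$ one gets
\[
\|w\|_{L^2(\partial\Omega)}^2\le \tfrac{\eta}{2\kappa_0}\,\|\nabla w\|_{L^2(\Omega)}^2+C_\eta'\,\|w\|_{L^{1/q}(\Omega)}^{2}.
\]
Finally, $\|w\|_{L^{1/q}(\Omega)}^{1/q}=\into|\nabla u|\,\dx\le M$, so $\|w\|_{L^{1/q}(\Omega)}^2\le M^{2q}$ is a constant depending only on $M$ and $q$. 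Combining with the first step yields the claim with $C_\eta=2\kappa_0 C_\eta' M^{2q}$.

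The main obstacle is the correct identification of the geometric inequality for $\partial_\nu|\nabla u|^2$ under the Neumann condition; the rest is a routine chain of trace + interpolation + Poincaré inequalities, where the hypothesis $\into|\nabla u|\,\dx\le M$ plays the role of the ``low-norm'' anchor that prevents the bound from depending on $u$.
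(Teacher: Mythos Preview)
Your argument is correct and is essentially the route taken in the references the paper cites in lieu of a proof (Li--Lankeit \cite[Lemma~2.1]{li2016}, resting on Ishida--Seki--Yokota \cite[Proposition~3.2]{ishida2014}): first the pointwise curvature bound $\partial_\nu|\nabla u|^2\le 2\kappa_0|\nabla u|^2$ on $\partial\Omega$ from the Neumann condition, then a trace--interpolation estimate for $w=|\nabla u|^q$ anchored by the low norm $\|w\|_{L^{1/q}(\Omega)}=\bigl(\into|\nabla u|\,\dx\bigr)^{q}\le M^{q}$, with Lemma~\ref{lemm:poin} replacing the full $W^{1,2}$ norm by $\|\nabla w\|_{L^2}$.

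One point of precision: the inequality $\|w\|_{L^2(\partial\Omega)}^2\le\eta\|w\|_{W^{1,2}(\Omega)}^2+C_\eta\|w\|_{L^{1/q}(\Omega)}^2$ does not come out of Ehrling's lemma in its standard form, which needs the chain $X\hookrightarrow\hookrightarrow Y\hookrightarrow Z$ with the \emph{middle} space being the norm you estimate; here $L^2(\partial\Omega)$ is certainly not embedded in $L^{1/q}(\Omega)$. The cited references obtain this step instead via a fractional trace embedding $W^{r,2}(\Omega)\hookrightarrow L^2(\partial\Omega)$ for some $r\in(\tfrac12,1)$, interpolation $\|w\|_{W^{r,2}}\le C\|w\|_{W^{1,2}}^{r}\|w\|_{L^2}^{1-r}$, and Young's inequality, after which a genuine Ehrling step $\|w\|_{L^2}^2\le\eta\|w\|_{W^{1,2}}^2+C_\eta\|w\|_{L^{1/q}}^2$ (now with the correct chain $W^{1,2}\hookrightarrow\hookrightarrow L^2\hookrightarrow L^{1/q}$) closes the argument. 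Your compactness shortcut can also be made rigorous via a Lions-type contradiction argument using compactness of the trace $W^{1,2}(\Omega)\to L^2(\partial\Omega)$, but as written the appeal to ``Ehrling'' is formally misapplied.
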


Further, we will make use of the notion of Neumann-heat semigroup. Let $\Omega$ be an arbitrary domain in $\R^n$ and let $-\Delta$ denote the Neumann-Laplacian in $L^2(\Omega)$, that is the Laplacian on $L^2(\Omega)$ subject to Neumann-boundary conditions (see \cite{Davies,Winkler2010JDE} for its precise definition and properties). Then $-\Delta$ is a nonnegative self-adjoint operator and it generates a $C^0-$semigroup $e^{-t\Delta}$ on $L^2(\Omega)$. Note that $u = e^{-t\Delta}f$ solves the heat equation $u_t -\Delta u = 0$ in $\Omega\times(0,+\infty)$, $u\in C(\Omega\times(0,+\infty))$ and 
\[
\frac{\partial u}{\partial \nu} = 0\,\mbox{ on }\, \partial\Omega\times(0,+\infty).
\]
There exists a positive $C^\infty-$function $G : \Omega\times\Omega\times(0,+\infty)\to\R$ such that $$e^{-t\Delta}f(x) = \into G(x,y,t)f(y)\dd y$$ for any $f \in L^p(\Omega)$, $1\leq p\leq +\infty$. Moreover we recall standard regularity results that are relevant for the second equation of system \eqref{eq:main_Gen}.
\begin{prop}[\cite{ke}]\label{Prop2}
Suppose $k\in(1,+\infty)$ and $g\in L^k((0,T);L^k(\Omega))$. Consider the following evolution problem:
\begin{equation}
\begin{cases}
c_t-\Delta c+c=g, \ \ (x,t)\in\Omega\times(0,T), \\
\displaystyle{\frac{\partial c}{\partial\nu}}=0, \ \ (x,t)\in\partial\Omega\times(0,T),\\
c(x,0)=c_0(x), \ \ (x,t)\in\Omega.
\end{cases}
\end{equation}
For each $c_0\in W^{2,k}(\Omega)$ such that $\displaystyle{\frac{\partial c_0}{\partial \nu}}=0$ and any $g\in L^k((0,T); L^k(\Omega))$, there exists a unique solution $c\in W^{1,k}((0,T); L^k(\Omega))\cap L^k((0,T); W^{2,k}(\Omega))$. 
\end{prop}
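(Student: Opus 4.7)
The plan is to cast the problem as an abstract evolution equation in $L^k(\Omega)$ and to exploit the maximal $L^k$-regularity of the Neumann Laplacian. First I would introduce the realization $A$ of $-\Delta + \mathrm{I}$ on $L^k(\Omega)$ with domain
\[
D(A) = \left\{ u \in W^{2,k}(\Omega) : \frac{\partial u}{\partial \nu} = 0 \text{ on } \partial\Omega \right\},
\]
so that the problem rewrites as the Cauchy problem $\partial_t c + A c = g$, $c(0) = c_0$, with $c_0 \in D(A)$. Standard $L^k$-elliptic regularity for the resolvent equation $(\lambda + 1 - \Delta) u = f$ with homogeneous Neumann data (Agmon--Douglis--Nirenberg; see e.g. Lunardi or Amann) yields that $A$ is closed, densely defined and sectorial, with spectrum contained in $\{z \in \mathbb{C} : \mathrm{Re}\, z \geq 1\}$. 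Therefore $-A$ generates a bounded analytic $C_0$-semigroup $\{e^{-tA}\}_{t\geq 0}$ on $L^k(\Omega)$.

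The core step is the maximal $L^k$-regularity of $A$: for every $g \in L^k((0,T);L^k(\Omega))$ the convolution
\[
w(t) = \int_0^t e^{-(t-s)A} g(s)\,\mathrm{d}s
\]
belongs to $W^{1,k}((0,T);L^k(\Omega)) \cap L^k((0,T);D(A))$, with the a-priori estimate $\|\partial_t w\|_{L^k(L^k)} + \|A w\|_{L^k(L^k)} \leq C\|g\|_{L^k(L^k)}$. Since $L^k(\Omega)$ is a UMD Banach space for $1<k<\infty$, this property follows from Weis' $R$-sectoriality criterion applied to the Neumann Laplacian on a smooth bounded domain; alternatively one may invoke the Dore--Venni theorem (as $A$ possesses bounded imaginary powers), or argue via the Gaussian upper bounds of the Neumann heat kernel combined with Calder\'on--Zygmund / transference techniques (cf. Denk--Hieber--Pruss, Hieber--Pruss, or Ladyzhenskaya--Solonnikov--Ural'tseva).

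Having this, existence is then straightforward: I set $c(t) = e^{-tA} c_0 + w(t)$. Because $c_0 \in D(A)$, the homogeneous part $v(t) = e^{-tA} c_0$ belongs to $C^1([0,T];L^k(\Omega)) \cap C([0,T];D(A))$ by the standard smoothing properties of analytic semigroups acting on the domain of the generator, and hence to $W^{1,k}((0,T);L^k(\Omega)) \cap L^k((0,T);W^{2,k}(\Omega))$. The inhomogeneous part $w$ already sits in this space by maximal regularity, and the Neumann boundary condition is encoded in $D(A)$, so $c$ has the claimed regularity and satisfies the equation together with the initial and boundary data.

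For uniqueness, given two solutions $c_1,c_2$ in the required class, the difference $u=c_1-c_2$ lies in $W^{1,k}((0,T);L^k(\Omega)) \cap L^k((0,T);D(A))$ and solves $\partial_t u + A u = 0$, $u(0)=0$. Then $u$ must coincide with the mild solution $e^{-tA} u(0) \equiv 0$; equivalently, testing against $|u|^{k-2}u$ and exploiting the Neumann condition yields
\[
\frac{1}{k}\frac{\mathrm{d}}{\mathrm{d}t}\|u\|_{L^k(\Omega)}^k + (k-1)\into |\nabla u|^2 |u|^{k-2} \dx + \|u\|_{L^k(\Omega)}^k \leq 0,
\]
so that $u\equiv 0$. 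The main technical obstacle in this scheme is the maximal $L^k$-regularity assertion, which for $k\neq 2$ lies beyond elementary energy methods and requires the deep harmonic-analytic tools mentioned above; it is however a well-established and quotable fact for the Neumann Laplacian on a smooth bounded domain, so in the paper it would be cited rather than re-derived.
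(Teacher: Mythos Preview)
Your argument is correct and is essentially the standard route to maximal $L^k$-regularity for this linear parabolic problem. However, the paper does not prove this proposition at all: it is simply stated with a citation to \cite{ke} as a known auxiliary result, collected in the appendix of preliminary inequalities. So there is nothing to compare against; your outline is precisely the kind of argument that lies behind the quoted result, and in the paper's context it would indeed be cited rather than re-derived, just as you anticipate in your final sentence.
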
 

\section{Local existence of weak solutions for the regularised system}\label{sec:appendix united}


This appendix is devoted in proving the existence of local classical solutions to the regularised system \eqref{eq:regulare}, as stated in Proposition \ref{locexistregulare}.
\begin{proof}[Proof of Proposition \ref{locexistregulare}]
Fix $\epsilon\in (0,1)$ and let $T>0$ be a small final time to be fixed later. Let $K>0$ be such that $\|\me_0\|_{L^\infty(\Omega)}\leq K$. Consider the space
\[
 X = \left\{ f\in L^{\infty}(\Omega\times (0,T))\,\bigr| \,f\leq K+1\, \mbox{ a.e. in }\, \Omega\times (0,T) \right\}.
\]
Fix $\bar{m}\in X$ and consider the following truncated functions
\begin{equation}\label{eq:D_hat}
    \hat{D}_\epsilon(z)=\begin{cases}
    D_\epsilon(0) & z<0,\\
    D_\epsilon(z) & 0\leq z\leq K+1,\\
    D_\epsilon(K+1) & z> K+1,
    \end{cases}
\end{equation}
and
\begin{equation}\label{eq:m_hat}
    \hat{m}(x,t)=\begin{cases}
    \bar{m}(x,t) & \mbox{if }\,x\in\Omega\, \mbox{ and }\,t\in(0,T),\\
   0 &\mbox{if }\,x\in\Omega\, \mbox{ and }\,t\geq T,
    \end{cases}
\end{equation}
Standard ordinary differential equations theory ensures that the following equation
\begin{equation}\label{eq:froz_d}
\begin{cases}
\partial_t d  =- h(\hat{m})d +  h(\hat{m}), & x\in\Omega,\,t>0\\
 d(x,0)=\de_0(x), & x\in \Omega.
\end{cases}
\end{equation}
admits a globally defined solution $d$ that is bounded since $h(\hat{m})$ is bounded. By introducing the additional truncated function
\[
 \hat{d}(x,t)=\begin{cases}
    d(x,t) & \mbox{if }\,x\in\Omega\, \mbox{ and }\,t\in(0,T),\\
   0 &\mbox{if }\,x\in\Omega\, \mbox{ and }\,t\geq T,
    \end{cases}
\]
invoking the standard existence theory for linear parabolic problems, e.g. \cite{ladyzhenskaia1988linear}, we can argue that the problem
\begin{equation}\label{eq:froz_c}
\begin{cases}
 \partial_t c=\Delta c-c+\hat{m}+\hat{d} & x\in\Omega,\,t>0,\\
 \frac{\partial c}{\partial \nu} = 0 & x\in\partial\Omega,\,t>0,\\
 c(x,0)=\ce_0(x) & x\in \Omega,
\end{cases}
\end{equation}
admits a globally defined weak solution. Moreover, for any $q>\frac{d+2}{2}$ there exists $k_1>0$ such that
\[
\|\nabla c\|_{L^q\left(\Omega\times(0,T)\right)}\leq k_1.
\]
Given $c$ a solution to \eqref{eq:froz_c}, we can consider the equation
\begin{equation}\label{eq:froz_m}
\begin{cases}
 \partial_t m=\nabla\left(\hat{D}_\epsilon(\bar{m})\nabla m\right) -\chi \nabla\left(h(\bar{m})\nabla c\right)+M(\bar{m}) & x\in\Omega,\,t>0,\\
 \frac{\partial m}{\partial \nu} = 0 & x\in\partial\Omega,\,t>0,\\
 m(x,0)=\me_0(x) & x\in \Omega.
\end{cases}
\end{equation}
Since $h(\bar{m})\nabla c\in L^q\left(\Omega\times(0,1)\right) $, invoking again standard parabolic theory, we may deduce that there exist $k_2,k_3>0$ such that
\[
 \|m\|_{L^\infty\left(\Omega\times(0,1)\right)}\leq k_2\, \mbox{ and }\,\|m\|_{C^{\alpha,\frac{\alpha}{2}}(\bar{\Omega}\times \left[0,1\right)}\leq k_3.
\]
The combination of the two bounds above allow to control the $ L^\infty-$norm of $m$ as follows
\[
 \|m\|_{L^\infty\left(\Omega\times(0,1)\right)}\leq  \|\me_0\|_{L^\infty\left(\Omega\right)}+k_3t^{\frac{\alpha}{2}},\,\mbox{ for all }t\in(0,1).
\]
By choosing $T\in(0,1)$ such that $k_3 T^{\frac{\alpha}{2}}\leq 1$ we have that  $\|m\|_{L^\infty\left(\Omega\times(0,1)\right)}\leq K+1$, thus we can easily deduce that the solution operator associated to \eqref{eq:froz_m}, $S(\bar{m})=m$ maps $X$ into itself, is continuous and $\overline{S(X)}$ is compact in $L^\infty\left(\Omega\times(0,T)\right)$. Then,  Schauder fixed point theorem provide the existence of a fixed point $m$ for $S$ in $X$. Since $m\in X$ we have that actually $\hat{D}_\epsilon(m)\equiv D_\epsilon(m)$, and recalling the definitions for the truncated functions $\hat{m}$ and $\hat{d}$ we can deduce that the triple $(m,c,d)$ actually solves \eqref{eq:regulare} on $\Omega\times(0,T)$ classically. This triple depends on the fixed parameter $\epsilon$ and the nonnegativy can be deduced from parabolic comparison principle. Finally, \eqref{localtoinf} is a consequence of standard extensibility argument, since $T$ only depends on the initial data.
\end{proof}
\section{Proofs of technical Lemmas}\label{Proofs}

We collect in this Appendix a list of proofs that are rather technical but standard in the literature.  
\begin{proof}[Proof of Lemma \ref{lem:nabla_c}]
    We first recall the following identities
\begin{equation}\label{ide_1}
    2\nabla f\cdot\nabla \Delta f = \Delta|\nabla f|^2 - 2|D^2f|^2,
\end{equation}
and
\begin{equation}\label{ide_2}
    \nabla |\nabla f|^{2q-2} = (q-1)|\nabla f|^{2q-4}\nabla |\nabla f|^2 ,
\end{equation}
that hold for all smooth functions $f$. From \eqref{eq:regulare_c}, a direct computation shows that
\begin{align*}
     \frac{1}{q}\frac{\dd}{\dt}\into |\nabla\ce|^{2q}  =& 2\into |\nabla \ce|^{2q-2}\nabla \ce \cdot \nabla\Delta \ce\dx -2\into |\nabla \ce|^{2q-2}\nabla \ce \cdot\nabla\ce\dx\\
     & +2\into |\nabla \ce|^{2q-2}\nabla \ce \cdot\nabla (\de +\me)\dx.
\end{align*}
Applying identity \eqref{ide_1} and integrating by parts we have
\begin{align*}
     2\into |\nabla \ce|^{2q-2}\nabla \ce \cdot \nabla\Delta \ce\dx  = &\into |\nabla \ce|^{2q-2}\left(\Delta|\nabla \ce|^2 - 2|D^2\ce|^2\right)\dx \\
     =& -(q-1)\into |\nabla \ce|^{2q-4}|\nabla|\nabla \ce|^2|^2 \dx + \int_{\partial\Omega}|\nabla \ce|^{2q-2}\frac{\partial |\nabla \ce|^{2}}{\partial \nu} \dd\sigma\\
    &-2\into |\nabla \ce|^{2q-2} |D^2\ce|^2\dx,
\end{align*}
while an integration by parts, \eqref{ide_2} and Young's inequality give
\begin{align*}
2\into |\nabla \ce|^{2q-2}\nabla \ce \cdot\nabla (\de +\me)\dx  = &  -2(q-1)\into (\de+\me )|\nabla \ce|^{2q-4}\nabla|\nabla \ce|^2\cdot \nabla \ce \dx \\
&-2 \into (\de+\me )|\nabla \ce|^{2q-2} \Delta \ce \dx\\
 \leq &\frac{q-1}{2}\into |\nabla \ce|^{2q-4}|\nabla|\nabla \ce|^2|^2 \dx\\
 &+2(q-1)\into (\de+\me )^2|\nabla \ce|^{2q-2}\dx\\
 & + \frac{2}{n}\into |\nabla \ce|^{2q-2}n|D^2\ce|^2\dx\\
 &+\frac{n}{2}\into (\de+\me )^2|\nabla \ce|^{2q-2}\dx,
\end{align*}
where we used $|\Delta c_\ep|^2\leq n|D^2 c_\ep|^2$. Adding together the two estimates above we obtain
\begin{align*}
      \frac{1}{q}\frac{\dd}{\dt}\into |\nabla\ce|^{2q}\dx+2\into |\nabla\ce|^{2q}\dx 
     \leq & -\frac{q-1}{2}\into |\nabla \ce|^{2q-4}|\nabla|\nabla \ce|^2|^2\dx\\
     & +\left(2(q-1)+\frac{n}{2}\right)\into (\de+\me )^2|\nabla \ce|^{2q-2}\dx\\
     &+ \int_{\partial\Omega}|\nabla \ce|^{2q-2}\frac{\partial |\nabla \ce|^{2}}{\partial \nu} \dd\sigma.
\end{align*}
Estimate \eqref{stima_2} can be deduced once we properly estimate the boundary integral.  Note that, by denoting with $e^{\Delta t}$ the Neumann-heat semigroup, see \cite{Davies,QuiSou}, and  using the  Duhamel formula we can estimate
\begin{align*}
    \|\nabla\ce(.,t)\|_{L^1(\Omega)}\leq& \|\nabla e^{\Delta (t-1)} \ce_0\|_{L^1(\Omega)}+\int_{0}^t\|\nabla e^{\Delta (t-\tau)}(\me(\cdot,\tau)+\de(\cdot,\tau))\|_{L^1(\Omega)}\dd\tau.
\end{align*}
Calling $\lambda_1>0$ the first positive eigenvalue of $-\Delta$ and  using the $L^p-L^q-$estimates for the Neumann-heat semigroup in the spirit of  \cite[Lemma 1.3]{Winkler2010JDE}, see also \cite[Lemma 2.6]{li2016} for a more general version of the following estimate, we have that, for all $t\in (0,T)$, there exist two positive constants $k_1$ and $k_2$ such that
\begin{align*}
   \|\nabla\ce(.,t)\|_{L^1(\Omega)}\leq&   k_1\|\nabla \ce_0\|_{L^1(\Omega)}\\&+k_2\int_0^t\left[\left((1+(t-\tau)^{-\frac{1}{2}}\right)e^{-\lambda_1(t-\tau)}\left(\|\me(\cdot,\tau)\|_{L^1(\Omega)}+\|\de(\cdot,\tau))\|_{L^1(\Omega)}\right)\right]d\tau\\
    & \leq k_1\|\nabla c_0\|_{L^1(\Omega)}+k_2\int_0^t\left[\left((1+(t-\tau)^{-\frac{1}{2}}\right)e^{-\lambda_1(t-\tau)}(1+k_h)K_0\right]d\tau\leq k_3,
\end{align*}
where we used Lemmas \ref{boundlem} and \ref{de1}. The above estimate, together with the regularity in  Proposition \ref{Prop2} ensures that $\ce$ is an eligible function for Lemma \ref{lemm:boundary}. Thus, there exists a constant $\tilde{C}:=\tilde{C}(q)>0$ such that
\[
\int_{\partial\Omega}|\nabla \ce|^{2q-2}\frac{\partial |\nabla \ce|^{2}}{\partial \nu} \dd\sigma \leq \frac{q-1}{q^2}\into |\nabla|\nabla \ce|^q|^2 \dx +\tilde{C},
\]
and using the identity $|\nabla \ce|^{2q-4}|\nabla|\nabla \ce|^2|^2=\frac{4}{q^2}|\nabla|\nabla\ce|^q|^2$ we get \eqref{stima_2}.
\end{proof}

\begin{proof}[Proof of Lemma \ref{lemm:primo_incubo}]
We consider first the case $n\geq2$. Let $\theta>0$ be a fixed small constant such that the following Holder exponents
\[
\alpha=\frac{nq}{nq-n+2-\theta}\quad\mbox{and}\quad \beta=\frac{nq}{n-2+\theta},
\]
remain strictly greater than $1$. Then we have
\begin{equation*}
     \into \me^{p-\gamma+1}|\nabla\ce|^2\dx \leq \left(\into \me^{(p-\gamma+1)\alpha}\dx\right)^{\frac{1}{\alpha}}\left(\into |\nabla\ce|^{2\beta}\dx\right)^{\frac{1}{\beta}}.
\end{equation*}
Invoking the Poinacré's inequality, in the form of Lemma \ref{lemm:poin}, and the Sobolev embedding
\[ W^{1,2}(\Omega)\hookrightarrow L^{\frac{2\beta}{q}}(\Omega),\]
we can perform the following estimates
\begin{equation}\label{primo_incubo}
\begin{aligned}
    \left(\into |\nabla\ce|^{2\beta}\dx\right)^{\frac{1}{\beta}} & = \||\nabla\ce|^q\|_{L^{\frac{2\beta}{q}}(\Omega)}^{\frac{2}{q}}\leq k_1\||\nabla\ce|^q\|_{W^{1,2}(\Omega)}^{\frac{2}{q}}\\
    & \leq k_2\left(\|\nabla|\nabla\ce|^q\|_{L^2(\Omega)}^{\frac{2}{q}}+ \||\nabla\ce|^q\|_{L^{\frac{s}{q}}(\Omega)}^{\frac{2}{q}}\right)\\
    &\leq k_2\|\nabla|\nabla\ce|^q\|_{L^2(\Omega)}^{\frac{2}{q}}+k_3,
\end{aligned}
\end{equation}
where $s\in\left[1,\frac{n}{n-1}\right)$ and the last inequality holds because of the regularity in Proposition \ref{Prop2}. The term involving $\me$ can be bounded by using the Gagliardo-Niremberg inequality in Lemma \ref{Gag_Nir} with the choices 
\begin{equation*}
    R=2,\quad Q=S=\frac{2}{p+\gamma-1}\quad\mbox{ and }\quad P=2\alpha\frac{p-\gamma+1}{p+\gamma-1},
\end{equation*}
which are admissible thanks to the smallness of $\theta$ and the first of \eqref{condpq_1}. We recall that assumption \eqref{condition_Gag_Nir} of Lemma \ref{Gag_Nir} is satisfied for $n=2,3$. Then we can compute
\begin{equation}\label{secondo_incubo}
\begin{aligned}
    \left(\into \me^{(p-\gamma+1)\alpha}\dx\right)^{\frac{1}{\alpha}} & \leq k_4\left( \bigl\|\nabla \me ^{\frac{p+\gamma-1}{2}}\bigr\|_{L^2(\Omega)}^{\frac{2a(p-\gamma+1)}{p+\gamma-1}}\bigl\|\me^{\frac{p+\gamma-1}{2}}\bigr\|_{L^{\frac{2}{p+\gamma-1}}(\Omega)}^{\frac{2(1-a)(p-\gamma+1)}{p+\gamma-1}}+\bigl\|\me^{\frac{p+\gamma-1}{2}}\bigr\|_{L^{\frac{2}{p+\gamma-1}}(\Omega)}^{\frac{2(p-\gamma+1)}{p+\gamma-1}}\right)\\
    & \leq k_4\left( \bigl\|\nabla \me ^{\frac{p+\gamma-1}{2}}\bigr\|_{L^2(\Omega)}^{\frac{2a(p-\gamma+1)}{p+\gamma-1}}\|\me\|_{L^1(\Omega)}^{(1-a)(p-\gamma+1)}+\|\me\|_{L^1(\Omega)}^{p-\gamma+1}\right)\\
    &\leq k_5 \|\nabla \me ^{\frac{p+\gamma-1}{2}}\|_{L^2(\Omega)}^{\frac{2a(p-\gamma+1)}{p+\gamma-1}}+k_6,
\end{aligned}
\end{equation}
with 
\[
 a=\frac{\frac{p+\gamma-1}{2}-\frac{(p+\gamma-1)(nq-n+2-\theta)}{2nq(p-\gamma+1)}}{\frac{p+\gamma-1}{2}+\frac{1}{n}-\frac{1}{2}},
\]
and the last inequality holds because of Lemma \ref{boundlem}. Note that the previous definition for the constant $a$, together with  \eqref{condpq_1} and the smallness of $\theta$ ensure that
\[
 \frac{a(p-\gamma+1)}{p+\gamma-1}<\frac{q-1}{q},
\] 
thus, for any $\eta>0$, we can merge together \eqref{primo_incubo} and \eqref{secondo_incubo} and, using twice the Young's inequality, we get
\begin{equation*}
     \into \me^{p-\gamma+1}|\nabla\ce|^2\dx \leq \eta\into \bigl|\nabla \me^{\frac{p+\gamma-1}{2}}\bigr|^2\dx+\eta\into \bigl|\nabla |\nabla \ce|^q\bigr|^2\dx+C(\eta).
\end{equation*}

In order to conclude the proof we need to tackle the one-dimensional case. In this case, using the Holder inequality on the l.h.s. of \eqref{stima_4} with exponents $q$ and $\frac{q}{q-1}$ we can easily reproduce an analogue  of \eqref{primo_incubo}. On the other hand, using again Lemma \ref{Gag_Nir}
\begin{align*}
    \left(\into \me^{(p-\gamma+1)\frac{q}{q-1}}\dx\right)^{\frac{q-1}{q}} & \leq k_7\left( \bigl\|\partial_x \me ^{\frac{p+\gamma-1}{2}}\bigr\|_{L^2(\Omega)}^{\frac{2\tilde{a}(p-\gamma+1)}{p+\gamma-1}}\bigl\|\me^{\frac{p+\gamma-1}{2}}\bigr\|_{L^{\frac{2}{p+\gamma-1}}(\Omega)}^{\frac{2(1-\tilde{a})(p-\gamma+1)}{p+\gamma-1}}+\bigl\|\me^{\frac{p+\gamma-1}{2}}\bigr\|_{L^{\frac{2}{p+\gamma-1}}(\Omega)}^{\frac{2(p-\gamma+1)}{p+\gamma-1}}\right)\\
    &\leq k_8 \|\partial_x \me ^{\frac{p+\gamma-1}{2}}\|_{L^2(\Omega)}^{\frac{2\tilde{a}(p-\gamma+1)}{p+\gamma-1}}+k_9,
\end{align*}
with
\[
 \tilde{a} = \frac{\frac{p+\gamma-1}{2}-\frac{(p+\gamma-1)(q-1)}{2q(p-\gamma+1)}}{\frac{p+\gamma-1}{2}+\frac{1}{2}}=\frac{(q(p-\gamma)+1)(p+\gamma-1)}{q(p+\gamma)(p-\gamma+1)}.
\]
Then, \eqref{stima_4} follows as in the multidimensional case thanks to \eqref{condpq_2}.
\end{proof}

\begin{proof}[Proof of Lemma \ref{lemm:secondo_incubo}]
We start dealing with the case $n\geq 2$. By expanding the square we are in the position of having to estimate three integrals separately:
\begin{align*}
    A_1:=  \into \me^2 |\nabla \ce|^{2q-2}&\dx,\quad
    A_2:= 2\into \me\de |\nabla \ce|^{2q-2}\dx,\\
    A_3:= & \into \de^2 |\nabla \ce|^{2q-2}\dx.
\end{align*}
Fix $0<\theta<2$ and for each of the above integrals we perform an Holder's inequality with exponents
\[
\alpha = \frac{nq}{2q+n-2-\theta(q-1)} \quad\mbox{ and }\quad\beta = \frac{nq}{(q-1)(n-2+\theta)}.
\]
Concerning $A_1$ we have
\begin{equation*}
    A_1 \leq \left(\into \me^{2\alpha}\dx\right)^{\frac{1}{\alpha}}\left(\into |\nabla \ce|^{2(q-1)\beta}\dx\right)^\frac{1}{\beta}.
\end{equation*}
Similarly to what we did in Lemma \ref{lemm:primo_incubo}, we can use Sobolev embedding, Poincaré's inequalities and Proposition \ref{Prop2} to deduce the bound
\begin{equation}\label{terzo_incubo}
    \begin{aligned}
        \left(\into |\nabla \ce|^{2(q-1)\beta}\dx\right)^\frac{1}{\beta} & =\bigl\||\nabla \ce|^q\bigr\|_{L^{\frac{2n}{n-2+\theta}}(\Omega)}^{\frac{2(q-1)}{q}}\leq k_1\bigl\||\nabla \ce|^q\bigr\|_{W^{1,2}(\Omega)}^{\frac{2(q-1)}{q}}\\
        &\leq k_2 \left(\bigl\|\nabla|\nabla \ce|^q\bigr\|_{L^{2}(\Omega)}^{\frac{2(q-1)}{q}}+\bigl\||\nabla \ce|^q\bigr\|_{L^{\frac{s}{q}}(\Omega)}^{\frac{2(q-1)}{q}}\right)\\
           &\leq k_2 \left(\bigl\|\nabla|\nabla \ce|^q\bigr\|_{L^{2}(\Omega)}^{\frac{2(q-1)}{q}}+k_3\right)
    \end{aligned}
\end{equation}
with $s\in\left[1,\frac{n}{n-1}\right)$ and for all $t\in \Tin$. Note that \eqref{terzo_incubo} holds true for each of the three estimates we are going to perform. Taking $\theta$ small enough and applying the Gagliardo-Niremberg inequality with exponents
\[
P=\frac{4nq}{(p+\gamma-1)(2q+n-2-\theta(q-1))}, \quad R=2, \quad Q=\frac{2}{p+\gamma-1}=S,
\]
and
\[
 \bar{a}_1 =\frac{\frac{p+\gamma-1}{2}-\frac{(p+\gamma-1)(2q+n-2-\theta(q-1))}{4nq}}{\frac{p+\gamma-1}{2}+\frac{1}{n}-\frac{1}{2}},
\]
we have
\begin{equation}\label{quarto_incubo}
    \begin{aligned}
        \left(\into \me^{2\alpha}\dx\right)^{\frac{1}{\alpha}} &= \bigl\|\me^{\frac{p+\gamma-1}{2}}\bigr\|_{L^{P}(\Omega)}^{\frac{4}{p+\gamma-1}}\\
        &\leq k_4\left(\bigl\|\nabla\me^{\frac{p+\gamma-1}{2}}\bigr\|_{L^{2}(\Omega)}^{\frac{4\bar{a}_1}{p+\gamma-1}}\bigl\|\me^{\frac{p+\gamma-1}{2}}\bigr\|_{L^{\bar{q}}(\Omega)}^{\frac{4(1-\bar{a}_1)}{p+\gamma-1}}+\bigl\|\me^{\frac{p+\gamma-1}{2}}\bigr\|_{L^{\bar{s}}(\Omega)}^{\frac{4}{p+\gamma-1}}\right)\\
        &\leq k_5\left(\bigl\|\nabla\me^{\frac{p+\gamma-1}{2}}\bigr\|_{L^{2}(\Omega)}^{\frac{4\bar{a}_1}{p+\gamma-1}}+k_6\right).
    \end{aligned}
\end{equation}
where in the last inequality we used the bound on the $L^1-$norm in  Lemma \ref{boundlem}. Thus, performing twice the Young's inequality and invoking condition \eqref{condpq_3} that ensure 
\[
\frac{2\bar{a}_1}{p+\gamma-1}<\frac{1}{q},
\]
we can conclude that, given $\eta>0$
\begin{equation}\label{terzo e quarto}
    \begin{aligned}
        A_1 &\leq  k_2 k_5\left(\bigl\|\nabla\me^{\frac{p+\gamma-1}{2}}\bigr\|_{L^{2}(\Omega)}^{\frac{4\bar{a}_1}{p+\gamma-1}}+k_6\right) \left(\bigl\|\nabla|\nabla \ce|^q\bigr\|_{L^{2}(\Omega)}^{\frac{2(q-1)}{q}}+k_3\right)\\
        & \leq  \eta\bigl\|\nabla\me^{\frac{p+\gamma-1}{2}}\bigr\|_{L^{2}(\Omega)}^2+\eta\bigl\|\nabla|\nabla \ce|^q\bigr\|_{L^{2}(\Omega)}^2+k_7
    \end{aligned}
\end{equation}
We turn now to the estimate of $A_2$. As already mentioned, the term involving $\ce$ can be treated as in \eqref{terzo_incubo}. Thus, using Holder and Young inequalities we have
\begin{equation}\label{quinto_incubo}
    \begin{aligned}
          2\left(\into (\me\de)^{\alpha}\dx\right)^{\frac{1}{\alpha}} & \leq 2\left(\into \me^{2\alpha}\dx\right)^{\frac{1}{2\alpha}}\left(\into \de^{2\alpha}\dx\right)^{\frac{1}{2\alpha}}\\
           &\leq\left(\into \me^{2\alpha}\dx\right)^{\frac{1}{\alpha}}+\left(\into \de^{2\alpha}\dx\right)^{\frac{1}{\alpha}}\\
            &\leq k_5\left(\bigl\|\nabla\me^{\frac{p+\gamma-1}{2}}\bigr\|_{L^{2}(\Omega)}^{\frac{4\bar{a}_1}{p+\gamma-1}}+k_6\right)+k_8,
    \end{aligned}
\end{equation}
where the last inequality holds because of \eqref{quarto_incubo} and Lemma \ref{de_sign}. 
The term $A_3$ can be estimated similarly to $A_1$, indeed
\begin{equation*}
    A_3 \leq \left(\into \de^{2\alpha}\dx\right)^{\frac{1}{\alpha}}\left(\into |\nabla \ce|^{2(q-1)\beta}\dx\right)^\frac{1}{\beta}\leq k_8\left(\into |\nabla \ce|^{2(q-1)\beta}\dx\right)^\frac{1}{\beta},
\end{equation*}
and then applying \eqref{terzo_incubo}. Thus \eqref{stima_5} follows with several application of the Young's inequality similarly to what we did in \eqref{terzo e quarto}.

In the one-dimensional case we perform a Holder's inequality with
\[
\alpha = q \quad\mbox{ and } \quad \beta = \frac{q}{q-1},
\]
then the equivalent of \eqref{terzo_incubo} is straightforward and we have
\begin{equation}\label{quarto_incubo_bis}
    \begin{aligned}
        \left(\into \me^{2q}\dx\right)^{\frac{1}{q}} &= \bigl\|\me^{\frac{p+\gamma-1}{2}}\bigr\|_{L^{\frac{4q}{p+\gamma-1}}(\Omega)}^{\frac{4}{p+\gamma-1}}
        \leq k_{9}\left(\bigl\|\partial_x\me^{\frac{p+\gamma-1}{2}}\bigr\|_{L^{2}(\Omega)}^{\frac{4\tilde{a}_1}{p+\gamma-1}}+k_{10}\right),
    \end{aligned}
\end{equation}
where the exponent $\tilde{a}_1$ is given by
\[
\tilde{a}_1=\frac{\frac{p+\gamma-1}{2}-\frac{p+\gamma-1}{4q}}{\frac{p+\gamma-1}{2}+\frac{1}{2}}.
\]
Thus, condition \eqref{condpq_4} allows to perform Young's inequality in the spirit of what we did in the multidimensional case. 
\end{proof}

\section{A finite-volume numerical scheme}\label{sec:appendix num}
The method we use is a modification of the finite volume method introduced in \cite{CCH} and extended to systems in \cite{CHS}, that consists in a positivity-preserving finite-volume method for system \eqref{eq:main_PM}. We sketch the scheme the one-dimensional setting. We consider a partition for the computational domain into finite-volume cells $U_{i}=\left[x_{i-\frac{1}{2}},x_{i+\frac{1}{2}}\right]$ of a uniform size $\Delta x$ with $x_{i} = i\Delta x$, $i \in\{-s,~\dots,s \}$, and we define
\begin{equation*}
\widetilde{m}_{i}(t) := \frac{1}{\Delta x}\int_{U_{i}}m(x,t)\dx,\qquad \widetilde{c}_{i}(t) := \frac{1}{\Delta x}\int_{U_{i}}c(x,t)\dx,\qquad \widetilde{d}_{i}(t) := \frac{1}{\Delta x}\int_{U_{i}}d(x,t)\dx,
\end{equation*}
the averages of the solutions $m$, $c$ and $d$ computed at each cell $U_{i}$. Those averages are obtained as solutions of the semi-discrete scheme described by the following system of ODEs 
\begin{subequations}\label{volume ode1}
\begin{align}
\frac{\dd\widetilde{m}_{i}(t)}{\dt}& = -\frac{F_{i+\frac{1}{2}}^{m}(t) - F_{i-\frac{1}{2}}^{m}(t) }{\Delta x}+\widetilde{m}_{i}(t)(1-\widetilde{m}_{i}(t)),
\\
\frac{\dd \widetilde{c}_{i}(t)}{\dt} &= -\frac{F_{i+\frac{1}{2}}^{c}(t) - F_{i-\frac{1}{2}}^{c}(t) }{\Delta x}+\lambda \widetilde{d}_i(t) - \widetilde{c}_{i}(t) +\beta \widetilde{m}_{i}(t),
\\
\frac{\dd\widetilde{d}_{i}(t)}{\dt} &= r\widetilde{m}_{i}(t)\frac{\widetilde{m}_{i}(t)}{1+\delta \widetilde{m}_{i}(t)}\left(1-\widetilde{d}_{i}(t)\right),
\end{align}
\end{subequations}
where the numerical fluxes $F_{i+\frac{1}{2}}^l$, $l=m,c$, are considered as an approximation of the transport parts in equation \eqref{eq:main_PM}. More precisely, defining
\begin{equation}\label{minmod}
\begin{split}
(m_{x})^{i} & = \mbox{minmod} \Bigg( \theta\frac{\widetilde{m}_{i+1} - \widetilde{m}_{i}}{\Delta x},~ \frac{\widetilde{m}_{i+1} - \widetilde{m}_{i-1}}{2\Delta x},~ \theta\frac{\widetilde{m}_{i} - \widetilde{m}_{i-1}}{\Delta x} \Bigg),\\
(c_{x})^{i}&  = \mbox{minmod} \Bigg( \theta\frac{\widetilde{c}_{i+1} - \widetilde{c}_{i}}{\Delta x},~ \frac{\widetilde{c}_{i+1} - \widetilde{c}_{i-1}}{2\Delta x},~ \theta\frac{\widetilde{c}_{i} - \widetilde{c}_{i-1}}{\Delta x} \Bigg).
\end{split}
\end{equation}
with $\theta\in[0,2]$, where the minmod limiter in \eqref{minmod} has the following definition
\begin{equation*}
\mbox{minmod}(a_{1}, a_{2},~\dots) :=
\begin{cases}
\min (a_{1}, a_{2},~\dots), \quad \mbox{if} ~ a_{i} > 0\quad \forall i
\\
\max (a_{1}, a_{2},~\dots), \quad \mbox{if} ~ a_{i} < 0\quad \forall i
\\
0, \qquad\qquad\qquad\mbox{otherwise.}
\end{cases}
\end{equation*}
and 
\begin{equation}
\vartheta_{m}^{i+1} =  - \frac{\gamma}{(\gamma-1)\Delta x} \big( \widetilde{m}_{i+1}^{\gamma-1} - \widetilde{m}_{i}^{\gamma-1} \big) + \chi\frac{(c_x)^i}{1+\widetilde{m}_{i}},
\end{equation}
the expression for $F_{i+\frac{1}{2}}^{m}$ is given by
\begin{equation}
F_{i+\frac{1}{2}}^{m} = \max (\vartheta_{}^{i+1},0)\Big[ \widetilde{m}_{i} + \frac{\Delta x}{2}(m_{x})_{i} \Big] + \min (\vartheta_{m}^{i+1},0)\Big[ \widetilde{m}_{i} - \frac{\Delta x}{2}(m_{x})^{i} \Big],
\end{equation}
Similarly we can recover the numerical flux for $c$. Finally, we integrate the semi-discrete scheme \eqref{volume ode1} numerically using the third-order strong preserving Runge-Kutta (SSP-RK) ODE solver used in \cite{gott}.

\end{appendices}

\end{document}